\documentclass[12pt]{amsart}
\usepackage{txfonts}      
\usepackage{amssymb}
\usepackage{eucal}
\usepackage{amsmath}
\usepackage{amscd}
\usepackage{xcolor}
\usepackage{multicol}
\usepackage[all]{xy}           
\usepackage{graphicx}
\usepackage{color}
\usepackage{colordvi}
\usepackage{xspace}
\usepackage{tikz}
\usepackage{makecell}
\usepackage{appendix}
\usepackage{amsthm}
\usepackage{enumitem}

\usepackage{ifpdf}
\ifpdf
\usepackage[colorlinks,final,backref=page,hyperindex]{hyperref}
\else
\usepackage[colorlinks,final,hyperindex,hypertex]{hyperref}
\fi

\usepackage{booktabs}  
\usepackage{amsmath, amssymb, amsthm}
\usepackage{float}  
\usepackage[active]{srcltx} 
\usepackage{longtable}

\begin{document}


\newtheorem{thm}{Theorem}[section]
\newtheorem{lem}[thm]{Lemma}
\newtheorem{cor}[thm]{Corollary}
\newtheorem{pro}[thm]{Proposition}
\theoremstyle{definition}
\newtheorem{defi}[thm]{Definition}
\newtheorem{ex}[thm]{Example}
\newtheorem{rmk}[thm]{Remark}
\newtheorem{pdef}[thm]{Proposition-Definition}
\newtheorem{condition}[thm]{Condition}

\renewcommand{\labelenumi}{{\rm(\alph{enumi})}}
\renewcommand{\theenumi}{\alph{enumi}}

\newcommand {\emptycomment}[1]{} 

\newcommand{\nc}{\newcommand}
\newcommand{\delete}[1]{}

\nc{\tred}[1]{\textcolor{red}{#1}}
\nc{\tblue}[1]{\textcolor{blue}{#1}}
\nc{\tgreen}[1]{\textcolor{green}{#1}}
\nc{\tpurple}[1]{\textcolor{purple}{#1}}
\nc{\tgray}[1]{\textcolor{gray}{#1}}
\nc{\torg}[1]{\textcolor{orange}{#1}}
\nc{\tmag}[1]{\textcolor{magenta}}
\nc{\btred}[1]{\textcolor{red}{\bf #1}}
\nc{\btblue}[1]{\textcolor{blue}{\bf #1}}
\nc{\btgreen}[1]{\textcolor{green}{\bf #1}}
\nc{\btpurple}[1]{\textcolor{purple}{\bf #1}}

\nc{\todo}[1]{\tred{To do:} #1}

    \nc{\mlabel}[1]{\label{#1}}  
    \nc{\mcite}[1]{\cite{#1}}  
    \nc{\mref}[1]{\ref{#1}}  
    \nc{\meqref}[1]{\eqref{#1}}  
    \nc{\mbibitem}[1]{\bibitem{#1}} 

\delete{
    \nc{\mcite}[1]{\cite{#1}{\small{\tt{{\ }(#1)}}}}  
    \nc{\mref}[1]{\ref{#1}{\small{\tred{\tt{{\ }(#1)}}}}}  
    \nc{\meqref}[1]{\eqref{#1}{{\tt{{\ }(#1)}}}}  
    \nc{\mbibitem}[1]{\bibitem[\bf #1]{#1}} 
}


\nc{\cm}[1]{\textcolor{red}{Chengming:#1}}
\nc{\yy}[1]{\textcolor{blue}{Yanyong: #1}}
\nc{\li}[1]{\textcolor{purple}{#1}}
\nc{\lir}[1]{\textcolor{purple}{Li:#1}}

\nc{\revise}[1]{\textcolor{blue}{#1}}


\nc{\tforall}{\ \ \text{for all }}
\nc{\hatot}{\,\widehat{\otimes} \,}
\nc{\complete}{completed\xspace}
\nc{\wdhat}[1]{\widehat{#1}}

\nc{\ts}{\mathfrak{p}}
\nc{\mts}{c_{(i)}\ot d_{(j)}}

\nc{\NA}{{\bf NA}}
\nc{\LA}{{\bf Lie}}
\nc{\CLA}{{\bf CLA}}

\nc{\cybe}{CYBE\xspace}
\nc{\nybe}{NYBE\xspace}
\nc{\ccybe}{CCYBE\xspace}

\nc{\ndend}{pre-Novikov\xspace}
\nc{\calb}{\mathcal{B}}
\nc{\rk}{\mathrm{r}}
\newcommand{\g}{\mathfrak g}
\newcommand{\h}{\mathfrak h}
\newcommand{\pf}{\noindent{$Proof$.}\ }
\newcommand{\frkg}{\mathfrak g}
\newcommand{\frkh}{\mathfrak h}
\newcommand{\Id}{\rm{Id}}
\newcommand{\gl}{\mathfrak {gl}}
\newcommand{\ad}{\mathrm{ad}}
\newcommand{\add}{\frka\frkd}
\newcommand{\frka}{\mathfrak a}
\newcommand{\frkb}{\mathfrak b}
\newcommand{\frkc}{\mathfrak c}
\newcommand{\frkd}{\mathfrak d}
\newcommand {\comment}[1]{{\marginpar{*}\scriptsize\textbf{Comments:} #1}}

\nc{\vspa}{\vspace{-.1cm}}
\nc{\vspb}{\vspace{-.2cm}}
\nc{\vspc}{\vspace{-.3cm}}
\nc{\vspd}{\vspace{-.4cm}}
\nc{\vspe}{\vspace{-.5cm}}


\nc{\disp}[1]{\displaystyle{#1}}
\nc{\bin}[2]{ (_{\stackrel{\scs{#1}}{\scs{#2}}})}  
\nc{\binc}[2]{ \left (\!\! \begin{array}{c} \scs{#1}\\
    \scs{#2} \end{array}\!\! \right )}  
\nc{\bincc}[2]{  \left ( {\scs{#1} \atop
    \vspace{-.5cm}\scs{#2}} \right )}  
\nc{\ot}{\otimes}
\nc{\sot}{{\scriptstyle{\ot}}}
\nc{\otm}{\overline{\ot}}
\nc{\ola}[1]{\stackrel{#1}{\la}}

\nc{\scs}[1]{\scriptstyle{#1}} \nc{\mrm}[1]{{\rm #1}}

\nc{\dirlim}{\displaystyle{\lim_{\longrightarrow}}\,}
\nc{\invlim}{\displaystyle{\lim_{\longleftarrow}}\,}

\nc{\bfk}{{\bf k}} \nc{\bfone}{{\bf 1}}
\nc{\rpr}{\circ}
\nc{\dpr}{{\tiny\diamond}}
\nc{\rprpm}{{\rpr}}

\nc{\mmbox}[1]{\mbox{\ #1\ }} \nc{\ann}{\mrm{ann}}
\nc{\Aut}{\mrm{Aut}} \nc{\can}{\mrm{can}}
\nc{\twoalg}{{two-sided algebra}\xspace}
\nc{\colim}{\mrm{colim}}
\nc{\Cont}{\mrm{Cont}} \nc{\rchar}{\mrm{char}}
\nc{\cok}{\mrm{coker}} \nc{\dtf}{{R-{\rm tf}}} \nc{\dtor}{{R-{\rm
tor}}}
\renewcommand{\det}{\mrm{det}}
\nc{\depth}{{\mrm d}}
\nc{\End}{\mrm{End}} \nc{\Ext}{\mrm{Ext}}
\nc{\Fil}{\mrm{Fil}} \nc{\Frob}{\mrm{Frob}} \nc{\Gal}{\mrm{Gal}}
\nc{\GL}{\mrm{GL}} \nc{\Hom}{\mrm{Hom}} \nc{\hsr}{\mrm{H}}
\nc{\hpol}{\mrm{HP}}  \nc{\id}{\mrm{id}} \nc{\im}{\mrm{im}}

\nc{\incl}{\mrm{incl}} \nc{\length}{\mrm{length}}
\nc{\LR}{\mrm{LR}} \nc{\mchar}{\rm char} \nc{\NC}{\mrm{NC}}
\nc{\mpart}{\mrm{part}} \nc{\pl}{\mrm{PL}}
\nc{\ql}{{\QQ_\ell}} \nc{\qp}{{\QQ_p}}
\nc{\rank}{\mrm{rank}} \nc{\rba}{\rm{RBA }} \nc{\rbas}{\rm{RBAs }}
\nc{\rbpl}{\mrm{RBPL}}
\nc{\rbw}{\rm{RBW }} \nc{\rbws}{\rm{RBWs }} \nc{\rcot}{\mrm{cot}}
\nc{\rest}{\rm{controlled}\xspace}
\nc{\rdef}{\mrm{def}} \nc{\rdiv}{{\rm div}} \nc{\rtf}{{\rm tf}}
\nc{\rtor}{{\rm tor}} \nc{\res}{\mrm{res}} \nc{\SL}{\mrm{SL}}
\nc{\Spec}{\mrm{Spec}} \nc{\tor}{\mrm{tor}} \nc{\Tr}{\mrm{Tr}}
\nc{\mtr}{\mrm{sk}}

\nc{\ab}{\mathbf{Ab}} \nc{\Alg}{\mathbf{Alg}}

\nc{\BA}{{\mathbb A}} \nc{\CC}{{\mathbb C}} \nc{\DD}{{\mathbb D}}
\nc{\EE}{{\mathbb E}} \nc{\FF}{{\mathbb F}} \nc{\GG}{{\mathbb G}}
\nc{\HH}{{\mathbb H}} \nc{\LL}{{\mathbb L}} \nc{\NN}{{\mathbb N}}
\nc{\QQ}{{\mathbb Q}} \nc{\RR}{{\mathbb R}} \nc{\BS}{{\mathbb{S}}} \nc{\TT}{{\mathbb T}}
\nc{\VV}{{\mathbb V}} \nc{\ZZ}{{\mathbb Z}}


\nc{\calao}{{\mathcal A}} \nc{\cala}{{\mathcal A}}
\nc{\calc}{{\mathcal C}} \nc{\cald}{{\mathcal D}}
\nc{\cale}{{\mathcal E}} \nc{\calf}{{\mathcal F}}
\nc{\calfr}{{{\mathcal F}^{\,r}}} \nc{\calfo}{{\mathcal F}^0}
\nc{\calfro}{{\mathcal F}^{\,r,0}} \nc{\oF}{\overline{F}}
\nc{\calg}{{\mathcal G}} \nc{\calh}{{\mathcal H}}
\nc{\cali}{{\mathcal I}} \nc{\calj}{{\mathcal J}}
\nc{\call}{{\mathcal L}} \nc{\calm}{{\mathcal M}}
\nc{\caln}{{\mathcal N}} \nc{\calo}{{\mathcal O}}
\nc{\calp}{{\mathcal P}} \nc{\calq}{{\mathcal Q}} \nc{\calr}{{\mathcal R}}
\nc{\calt}{{\mathcal T}} \nc{\caltr}{{\mathcal T}^{\,r}}
\nc{\calu}{{\mathcal U}} \nc{\calv}{{\mathcal V}}
\nc{\calw}{{\mathcal W}} \nc{\calx}{{\mathcal X}}
\nc{\CA}{\mathcal{A}}

\nc{\fraka}{{\mathfrak a}} \nc{\frakB}{{\mathfrak B}}
\nc{\frakb}{{\mathfrak b}} \nc{\frakd}{{\mathfrak d}}
\nc{\oD}{\overline{D}}
\nc{\frakF}{{\mathfrak F}} \nc{\frakg}{{\mathfrak g}}
\nc{\frakm}{{\mathfrak m}} \nc{\frakM}{{\mathfrak M}}
\nc{\frakMo}{{\mathfrak M}^0} \nc{\frakp}{{\mathfrak p}}
\nc{\frakS}{{\mathfrak S}} \nc{\frakSo}{{\mathfrak S}^0}
\nc{\fraks}{{\mathfrak s}} \nc{\os}{\overline{\fraks}}
\nc{\frakT}{{\mathfrak T}}
\nc{\oT}{\overline{T}}
\nc{\frakX}{{\mathfrak X}} \nc{\frakXo}{{\mathfrak X}^0}
\nc{\frakx}{{\mathbf x}}
\nc{\frakTx}{\frakT}      
\nc{\frakTa}{\frakT^a}        
\nc{\frakTxo}{\frakTx^0}   
\nc{\caltao}{\calt^{a,0}}   
\nc{\ox}{\overline{\frakx}} \nc{\fraky}{{\mathfrak y}}
\nc{\frakz}{{\mathfrak z}} \nc{\oX}{\overline{X}}

\font\cyr=wncyr10


\title[]{Classification of compatible left-symmetric conformal algebras on solvable Lie conformal algebras of rank two}

\author{Yuanmeng Zhai}
\address{School of Mathematics, Hangzhou Normal University,
Hangzhou 311121, PR China}
\email{2025111029019@stu.hznu.edu.cn}

\author{Yanyong Hong}
\address{School of Mathematics, Hangzhou Normal University,
Hangzhou 311121, PR China}
\email{yyhong@hznu.edu.cn}

\subjclass[2010]{17A30, 17D25, 17A60, 17B69} \keywords{left-symmetric conformal algebras, solvable Lie conformal algebras, isomorphism classes, Novikov conformal algebras}

\begin{abstract}
In this paper, we give a complete classification of compatible left-symmetric conformal algebra structures on solvable Lie conformal algebras which are free of rank two as $\mathbb{C}[\partial]$-modules up to isomorphism. We obtain ten mutually non-isomorphic normal-form types and determine the precise equivalence relations among the parameters within each family. Moreover, we also characterize the Novikov conformal algebras among the classified left-symmetric conformal algebras, thereby obtaining a complete classification of compatible Novikov conformal algebra structures in the same setting.

\end{abstract}

\maketitle




\allowdisplaybreaks

\section{Introduction}
Lie conformal algebras were introduced by V. Kac \cite{K1} as an axiomatic
framework for the singular part of the operator product expansion in
conformal field theory \cite{BPZ}. Also known as vertex Lie algebras
\cite{P}, they are closely connected with vertex algebras \cite{K1},
infinite-dimensional Lie algebras satisfying the locality property
\cite{KacL}, and the Hamiltonian formalism for nonlinear evolution
equations \cite{BDK}. A Lie conformal algebra is said to be finite if it
is finitely generated as a $\mathbb{C}[\partial]$-module. The structure theory,
representation theory, and cohomology of finite Lie conformal algebras have been
extensively developed (see \cite{DK, CK, BKV, DK1}). Moreover, Lie conformal algebras which are free of rank one and rank two as $\mathbb{C}[\partial]$-modules
have been classified up to isomorphism in \cite{DK} and \cite{K2, BCH,
HL3}, respectively.

The notion of left-symmetric conformal algebras was introduced in \cite{HL1} to investigate the compatible left-symmetric algebra structures on formal distribution Lie algebras and to construct vertex algebras. Left-symmetric conformal algebras can also be viewed as left-symmetric algebras in a pseudo-tensor category \cite{Wu}. It was shown in \cite{HB}
that left-symmetric conformal algebras are closely related to the
conformal classical  Yang--Baxter equation and $\mathcal{O}$-operators on
Lie conformal algebras. Several works on the structure theory of
left-symmetric conformal algebras have appeared (see \cite{HL2,
X1, XH}).  Furthermore, as an important class of left-symmetric conformal algebras, Novikov conformal algebras have also been studied in \cite{KL, KN}. An important property of left-symmetric conformal algebras is
that the commutator of a left-symmetric conformal algebra defines a Lie
conformal algebra. Therefore, a natural approach to studying left-symmetric
conformal algebras is to investigate the compatible left-symmetric
conformal algebra structures on known Lie conformal algebras. In
\cite{HL2}, the left-symmetric conformal algebras which are free of rank one as $\mathbb{C}[\partial]$-modules were
classified, and the compatible left-symmetric conformal algebra structures
on finite semisimple Lie conformal algebras were characterized. As for
ranks two and three, the compatible left-symmetric conformal algebra
structures on the Lie conformal algebra $W(a,b)$ and on the
Schr\"odinger--Virasoro conformal algebra and its subalgebra, under
certain natural conditions, have been classified in \cite{L1, W2, LSW} respectively.
These classification results indicate that left-symmetric conformal
algebras which are free of rank two as $\mathbb{C}[\partial]$-modules are extremely diverse. Therefore, it seems difficult to give a complete classification of left-symmetric conformal algebras which are free of rank two as $\mathbb{C}[\partial]$-modules up to isomorphism.

In this paper, we classify compatible left-symmetric conformal algebra structures on  solvable Lie conformal algebras which are free of rank two as $\mathbb{C}[\partial]$-modules. Note
that a compatible left-symmetric conformal algebra is naturally
a conformal module of its sub-adjacent Lie conformal algebra. Using the conformal analogue of Lie's Theorem established in \cite{DK}, 
 we reduce the classification problem to solving a system of nine polynomial equations. This yields twelve explicit presentation families (see Theorem \ref{thm-1}), which are subsequently reduced to ten mutually non-isomorphic normal-form types. Then we determine the precise isomorphism relations among the parameters within each parametrized type see Theorem \ref{pp4.5}). Finally, we characterize the Novikov conformal algebras in those normal forms and thereby obtain a complete classification, up to isomorphism, of compatible Novikov conformal algebra structures in the same setting (see Corollary \ref{cor-Nov}).

This paper is organized as follows. In Section 2, we recall the basic
definitions and facts about left-symmetric conformal algebras and Lie
conformal algebras. Section 3 is devoted to the classification of
left-symmetric conformal algebra structures on solvable Lie conformal
algebras which are free of rank two as $\mathbb{C}[\partial]$-modules. In Section 4, we determine the isomorphism classes
of the left-symmetric conformal algebras classified in Section 3 and characterize which left-symmetric conformal algebras are Novikov conformal algebras.

 \vspace{0.1cm}
\noindent {\bf Notations.} Throughout this paper, we denote by $\mathbb{C}$ the set of complex numbers. Moreover, if $A$ is a vector space over $\mathbb{C}$, then we denote the vector space of polynomials in $\lambda$ with coefficients in $A$  by $A[\lambda]$ respectively. For a nonzero polynomial $f(\lambda)\in \mathbb{C}[\lambda]$, we write $\text{deg} f(\lambda)$ for the degree of $f(\lambda)$ in $\lambda$. Similarly, for a nonzero polynomial $f(\lambda, \partial)\in \mathbb{C}[ \lambda, \partial]$, we denote the degrees of $f(\lambda, \partial)$ in $\lambda$  and $\partial$ by $\text{deg}_\lambda f(\lambda, \partial)$ and $\text{deg}_\partial f(\lambda, \partial)$ respectively.

\section{Preliminaries on left-symmetric conformal algebras and Lie conformal algebras}

In this section, we recall the definitions and basic facts about left-symmetric conformal algebras and Lie conformal algebras.

\begin{defi}\cite{K1, HL1}
A {\bf conformal algebra} $R$ is a  $\mathbb{C}[\partial]$-module equipped with a $\lambda$-product $\cdot_\lambda\cdot$, which is a $\mathbb{C}$-bilinear map from $R \times R \to R[\lambda]$  satisfying
\begin{align*}
&\partial a_\lambda b = -\lambda a_\lambda b, \quad a_\lambda \partial b = (\partial + \lambda)a_\lambda b,\;\;a, b\in R.
  &&\text{(conformal sesquilinearity)}
\end{align*}

A \textbf{left-symmetric conformal algebra} $R$ is a conformal algebra with a $\mathbb{C}$-bilinear map $\cdot_\lambda\cdot: R\times R\rightarrow R[\lambda]$ satisfying the following axiom
\begin{align*}
&(a_\lambda b)_{\lambda+\mu}c - a_\lambda(b_\mu c) = (b_\mu a)_{\lambda+\mu}c - b_\mu(a_\lambda c),\;\;a, b, c\in R.
  &&\text{(left-symmetric identity)}
\end{align*}
We denote it by $(R,\cdot_\lambda\cdot)$.

A \textbf{Novikov conformal algebra} \(R\) is a left-symmetric conformal algebra satisfying the additional axiom
\begin{align*}
&(a_\lambda b)_{\lambda+\mu}c = (a_\lambda c)_{-\mu-\partial}b,\;\;a, b, c\in R.
\end{align*}

A \textbf{Lie conformal algebra} $R$ is a conformal algebra with a $\mathbb{C}$-bilinear map $[\cdot_\lambda\cdot]: R\times R\rightarrow R[\lambda]$, called the {\bf $\lambda$-bracket}, satisfying for all $a,b,c\in R$:
\begin{align*}
[a_\lambda b] &= -[b_{-\lambda-\partial}a], \qquad &&\text{(skew-symmetry)} \\
[a_\lambda [b_\mu c]] &= [[a_\lambda b]_{\lambda+\mu}c] + [b_\mu [a_\lambda c]]. \qquad &&\text{(Jacobi identity)}
\end{align*}

A conformal algebra is said to be \textbf{finite} if it is finitely generated as a $\mathbb{C}[\partial]$-module. The {\bf rank} of a conformal algebra is its rank as a $\mathbb{C}[\partial]$-module.

\end{defi}

Let $(R, \cdot_\lambda \cdot)$ be a left-symmetric conformal algebra and $I$ be a $\mathbb{C}[\partial]$-submodule of $R$. If $I_\lambda R \subseteq I[\lambda]$ and $R_\lambda I \subseteq I[\lambda]$, then $I$ is called an {\bf ideal} of $(R, \cdot_\lambda \cdot)$. The left-symmetric conformal algebra $(R, \cdot_\lambda \cdot)$ is called {\bf simple} if it is non-trivial and has no proper ideals. Analogously, the notions of ideals and simplicity can be defined for Lie conformal algebras.

Let $(R, \cdot_\lambda \cdot)$ and $(S, \cdot_\lambda \cdot)$ be two (either left-symmetric or Lie) conformal algebras . Recall that a {\bf homomorphism} from $(R, \cdot_\lambda \cdot)$ to $(S, \cdot_\lambda \cdot)$ is a $\mathbb{C}[\partial]$-module homomorphism $\varphi: R\rightarrow S$ such that $\varphi(a_\lambda b)=\varphi(a)_\lambda \varphi(b)$ for all $a$, $b\in R$. Moreover, if $\varphi$ is bijective, then $\varphi$ is an {\bf isomorphism} from $(R, \cdot_\lambda \cdot)$ to $(S, \cdot_\lambda \cdot)$, and we say that $(R, \cdot_\lambda \cdot)$ and $(S, \cdot_\lambda \cdot)$ are {\bf isomorphic}.

\delete{A $\mathbb{C}[\partial]$-submodule $I$ of a left-symmetric conformal algebra $R$ is called an \textbf{ideal} if $I_\lambda R \subseteq I[\lambda]$ and $R_\lambda I \subseteq I[\lambda]$.
A left-symmetric conformal algebra $R$ is called \textbf{simple} if it is non-trivial and has no proper ideals.

A homomorphism between left-symmetric conformal algebras $(R,\cdot_\lambda\cdot)$ and $(S,\cdot_\lambda\cdot)$ is a $\mathbb{C}[\partial]$-module homomorphism $\varphi: R \to S$ such that
\[
\varphi(a_\lambda b) = \varphi(a)_\lambda \varphi(b) \quad \text{for all } a,b\in R.
\]
An \textbf{isomorphism} is a bijective homomorphism; if one exists, $R$ and $S$ are \textbf{isomorphic}.}

\delete{\begin{defi}\cite{K2, H1}
A \textbf{Lie conformal algebra} $R$ is a $\mathbb{C}[\partial]$-module  together with a $\mathbb{C}$-bilinear map
$[\cdot_\lambda\cdot] : R \times R \to R[\lambda]$, called the $\lambda$-bracket, satisfying for all $a,b,c\in R$:
\begin{align*}
[\partial a_\lambda b] &= -\lambda [a_\lambda b], \quad [a_\lambda \partial b] = (\partial + \lambda)[a_\lambda b],
  &&\text{(conformal sesquilinearity)} \\
[a_\lambda b] &= -[b_{-\lambda-\partial}a], \qquad &&\text{(skew-symmetry)} \\
[a_\lambda [b_\mu c]] &= [[a_\lambda b]_{\lambda+\mu}c] + [b_\mu [a_\lambda c]]. \qquad &&\text{(Jacobi identity)}
\end{align*}}

\begin{defi}\cite{K1}
A {\bf conformal module} $V$ of a Lie conformal algebra $(R, [\cdot_\lambda \cdot])$
is a $\mathbb{C}[\partial]$-module equipped with a $\mathbb{C}$-bilinear map
$R\times V\rightarrow
V[\lambda]$, $a\times v\mapsto a_\lambda v$, satisfying the following relations for all $a,b\in R$ and $v\in V$,
\begin{eqnarray}
&&\label{conf-1}(\partial a)_\lambda v=-\lambda a_\lambda v,\ a_\lambda(\partial
v)=(\partial+\lambda)a_\lambda v,\\
&&\label{conf-2}a_\lambda(b_\mu v)-b_\mu(a_\lambda v)=[a_\lambda b]_{\lambda+\mu}v.
\end{eqnarray}
\end{defi}

\begin{pro}\label{pro-cp}\cite[Proposition 2.5]{HL1}, \cite[Corollary 3.9]{HL2} Suppose that $(R,\cdot_\lambda\cdot)$ is a left-symmetric conformal algebra. Then the $\lambda$-bracket
\[
[a_\lambda b] = a_\lambda b - b_{-\lambda-\partial}a, \qquad a,b\in R,
\]
defines a Lie conformal algebra structure on $R$. We denote this Lie conformal algebra by $\mathfrak{g}(R)$. Moreover, $R$ is a conformal module of $\mathfrak{g}(R)$ with the $\lambda$-product of $(R,\cdot_\lambda\cdot)$.
\end{pro}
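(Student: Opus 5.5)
We verify the three claims of Proposition \ref{pro-cp} directly from the axioms of a left-symmetric conformal algebra: conformal sesquilinearity of the bracket, skew-symmetry, the Jacobi identity, and then the module structure. Throughout we use the standard convention that in $b_{-\lambda-\partial}a$ the symbol $\partial$ acts on the coefficients of the resulting polynomial. We also use the elementary consequence of sesquilinearity that $(a_\lambda b)_{\mu}c$ may be expanded by moving $\partial$'s out of the left factor as $-\mu$.

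\textbf{Sesquilinearity and skew-symmetry.} First we check that $[\partial a_\lambda b]=\partial a_\lambda b - b_{-\lambda-\partial}\partial a$.
\begin{itemize}
\item The first term is $-\lambda a_\lambda b$.
\item For the second term, $b_\mu \partial a=(\partial+\mu)b_\mu a$. Substituting $\mu=-\lambda-\partial$ gives $(\partial-\lambda-\partial)b_{-\lambda-\partial}a=-\lambda b_{-\lambda-\partial}a$.
\end{itemize}
Hence $[\partial a_\lambda b]=-\lambda[a_\lambda b]$. The identity $[a_\lambda\partial b]=(\partial+\lambda)[a_\lambda b]$ follows in the same way.

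Skew-symmetry holds because $[b_{-\lambda-\partial}a]=b_{-\lambda-\partial}a-a_{\lambda}b$. Here one checks that the double substitution $\mu\mapsto-\mu-\partial$ applied twice returns $\lambda$. This is the usual lemma that $(x_{-\lambda-\partial}y)$ composed with skew substitution is an involution, and it is verified on the polynomial coefficients.

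\textbf{Module structure.} Relation \eqref{conf-1} is exactly the sesquilinearity of the $\lambda$-product. For \eqref{conf-2} we need
\[
a_\lambda(b_\mu c)-b_\mu(a_\lambda c)=[a_\lambda b]_{\lambda+\mu}c=(a_\lambda b)_{\lambda+\mu}c-(b_{-\lambda-\partial}a)_{\lambda+\mu}c .
\]
Sesquilinearity in the first argument turns $\partial$ into $-(\lambda+\mu)$, so $(b_{-\lambda-\partial}a)_{\lambda+\mu}c=(b_{\mu}a)_{\lambda+\mu}c$. The identity therefore becomes precisely the left-symmetric identity, rearranged.

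\textbf{Jacobi identity.} Using the module property just established, we write
\[
[a_\lambda[b_\mu c]]=a_\lambda(b_\mu c)-a_\lambda(c_{-\mu-\partial}b)-(b_\mu c)_{-\lambda-\partial}a+(c_{-\mu-\partial}b)_{-\lambda-\partial}a.
\]
We expand the other two terms $[[a_\lambda b]_{\lambda+\mu}c]$ and $[b_\mu[a_\lambda c]]$ similarly, giving twelve terms in total. These group into three instances of the left-symmetric identity, applied to the triples $(a,b,c)$, $(a,c,b)$ and $(b,c,a)$ with suitably substituted spectral parameters. Alternatively, one can verify Jacobi by noting that $R\to \mathrm{Cend}(R)$, $a\mapsto a_\lambda\cdot$ intertwines the bracket by \eqref{conf-2}. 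This transfers the Jacobi identity only modulo the kernel, though, so the direct twelve-term computation is the safer route.

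The main obstacle is this bookkeeping: correctly rewriting terms like $(c_{-\mu-\partial}b)_{-\lambda-\partial}a$ when $\partial$ sits inside the first argument. This requires repeated use of $(\partial x)_\nu y=-\nu x_\nu y$ and the rule that, in an iterated product, the outer $\partial$ becomes $\lambda+\mu+\partial$-shifted. I would handle it by first proving the auxiliary identity
\[
(x_{-\nu-\partial}y)_{\rho}z=(x_{\rho-\nu}\,y)_{\rho}z\ \text{ with }\nu\text{ evaluated before }\partial,
\]
valid by sesquilinearity, and then matching terms pairwise.
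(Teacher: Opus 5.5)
Your proposal is correct. The paper gives no argument of its own for this proposition; it is quoted from Hong--Li. So there is no in-paper proof to compare with, and what you give is the standard direct verification.

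The sesquilinearity, skew-symmetry and module steps are complete as written. Your derivation of \eqref{conf-2} uses only the left-symmetric identity and the first-argument rule $(x_{-\nu-\partial}y)_\rho z=(x_{\rho-\nu}y)_\rho z$, not the Jacobi identity. So using it later in the Jacobi step is not circular. Your remark that $a\mapsto a_\lambda\cdot$ need not be injective is also right: $X$ is a left annihilator in $\mathcal{A}_1(B(\lambda,\partial))$. The twelve-term computation is therefore the right route.

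The only part left as a sketch is the Jacobi bookkeeping, and it closes along exactly the grouping you propose. Besides your first-argument rule you need the second-argument rule $x_\rho(\partial w)=(\rho+\partial)\,x_\rho w$, which gives
\begin{gather*}
a_\lambda(c_{-\mu-\partial}b)=a_\lambda(c_{-\lambda-\mu-\partial}b),\qquad b_\mu(c_{-\lambda-\partial}a)=b_\mu(c_{-\lambda-\mu-\partial}a),\\
c_{-\lambda-\mu-\partial}(b_{-\lambda-\partial}a)=c_{-\lambda-\mu-\partial}(b_\mu a).
\end{gather*}
Your first-argument rule gives $(c_{-\mu-\partial}b)_{-\lambda-\partial}a=(c_{-\lambda-\mu-\partial}b)_{-\lambda-\partial}a$ and $(c_{-\lambda-\partial}a)_{-\mu-\partial}b=(c_{-\lambda-\mu-\partial}a)_{-\mu-\partial}b$. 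With these rewrites, the twelve terms of $[a_\lambda[b_\mu c]]-[b_\mu[a_\lambda c]]-[[a_\lambda b]_{\lambda+\mu}c]$ split into three groups of four:
\begin{itemize}
\item The terms in $a_\lambda(b_\mu c)$, $b_\mu(a_\lambda c)$, $(a_\lambda b)_{\lambda+\mu}c$ and $(b_\mu a)_{\lambda+\mu}c$ cancel by \eqref{conf-2}.
\item The terms in $(a_\lambda c)_{-\mu-\partial}b$, $a_\lambda(c_{-\lambda-\mu-\partial}b)$, $(c_{-\lambda-\mu-\partial}a)_{-\mu-\partial}b$ and $c_{-\lambda-\mu-\partial}(a_\lambda b)$ are exactly the left-symmetric identity for $(a,c,b)$ at parameters $(\lambda,\nu)$ with $\nu=-\lambda-\mu-\partial$.
\item The remaining four terms are the negative of the left-symmetric identity for $(b,c,a)$ at parameters $(\mu,\nu)$, again with $\nu=-\lambda-\mu-\partial$, so that $\mu+\nu=-\lambda-\partial$.
\end{itemize}
Substituting $\nu\mapsto-\lambda-\mu-\partial$ into a polynomial identity is legitimate here. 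With $\partial$ acting on the coefficients, everything is a polynomial in the commuting quantities $\lambda,\mu,\partial$.
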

\begin{rmk}
In the above proposition, $\mathfrak{g}(R)$ is called the \textbf{sub-adjacent Lie conformal algebra} of $(R,\cdot_\lambda\cdot)$, and $(R,\cdot_\lambda\cdot)$ is also called a \textbf{compatible left-symmetric conformal algebra structure} on the Lie conformal algebra $\mathfrak{g}(R)$.
\end{rmk}

\begin{defi}\cite{DK}
Let $(R, [\cdot_\lambda \cdot])$ be a Lie conformal algebra.
Write the $\lambda$-bracket as a formal power series
\[
[a_\lambda b] = \sum_{n\ge 0} \frac{\lambda^n}{n!} (a_{(n)}b),
\]
where $a_{(n)}b\in R$ is called the \textbf{$n$-th product} of $a$ and $b$.
The \textbf{derived algebra} $R'$ is the $\mathbb{C}[\partial]$-span of all elements $a_{(n)}b$ with $a,b\in R$, $n\ge 0$.
Inductively set $R^{(1)} = R'$ and $R^{(k+1)} = (R^{(k)})'$ for $k\ge 1$.
$R$ is a \textbf{solvable} Lie conformal algebra if $R^{(n)} = 0$ for some $n\ge 1$.
An ideal $I$ of $R$ is {\bf solvable} if it is solvable as a Lie conformal algebra.
\end{defi}

\begin{pro}\label{pro-LT}\cite[Corollary 8.3]{DK} Let $(R, [\cdot_\lambda \cdot])$ be a finite solvable Lie conformal algebra and $V$ be a finite conformal module of $R$. Then there exists a family of submodules
\begin{eqnarray*}
\text{Tor}(V)=V_0\subset V_1\subset\cdots\subset V_N=V
\end{eqnarray*}
such that $V_{i+1}/V_i$ is free of rank one for $i\geq 0$, where $\text{Tor}(V)=\{v\in V~|~\exists f(\partial)\in \mathbb{C}[\partial]\backslash \{0\} ~\text{such that} $ $f(\partial)v=0\}$. Moreover, the action of $R$ on each $V_{i+1}/V_i=\mathbb{C}[\partial]\overline{v_{i+1}}$ is of the following form
\begin{eqnarray*}
a_\lambda \overline{v_{i+1}}=\phi_a(\lambda)\overline{v_{i+1}},\;\;a\in R,
\end{eqnarray*}
where $\phi_a(\lambda)\in \mathbb{C}[\lambda]$ for each $a\in R$.

\end{pro}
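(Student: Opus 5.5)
The plan is to argue in three stages: first dispose of the torsion, then find a "common eigenvector" in the free quotient, and then induct on rank.

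\emph{Torsion.} Let $v\in V$ with $f(\partial)v=0$, where $f\neq 0$. Sesquilinearity gives $0=a_\lambda(f(\partial)v)=f(\partial+\lambda)\,a_\lambda v$. Write $a_\lambda v=\sum_{n=0}^{N}\lambda^n w_n$ with $w_N\ne 0$. The coefficient of the top power $\lambda^{\deg f+N}$ is then a nonzero scalar multiple of $w_N$, which forces $w_N=0$, a contradiction. Hence $a_\lambda v=0$ for all $a\in R$. So $\text{Tor}(V)$ is a conformal submodule on which $R$ acts trivially. Since $V$ is finitely generated over the PID $\mathbb{C}[\partial]$, the quotient $W=V/\text{Tor}(V)$ is a finite free $\mathbb{C}[\partial]$-module. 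This reduces the statement to showing that every nonzero finite free conformal $R$-module $W$ has a flag of $R$-submodules with free rank-one quotients on which $R$ acts by scalars $\phi_a(\lambda)\in\mathbb{C}[\lambda]$.

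\emph{Induction.} I would induct on $\operatorname{rank} W$. The key claim is that there is a nonzero $w\in W$ with $a_\lambda w=\phi_a(\lambda)w$ for all $a\in R$, where $\phi_a$ does not depend on $\partial$. Given such a $w$, replace $\mathbb{C}[\partial]w$ by its saturation $W_1=\{u\in W : g(\partial)u\in\mathbb{C}[\partial]w \text{ for some } g\ne 0\}$. This is free of rank one, say $W_1=\mathbb{C}[\partial]w'$, and $W/W_1$ is torsion-free, hence free. Write $w=h(\partial)w'$. Then $W_1$ is $R$-stable: if $g(\partial)u=p(\partial)w$, then $g(\partial+\lambda)\,a_\lambda u=p(\partial+\lambda)\phi_a(\lambda)w$, so the $\lambda$-coefficients of $a_\lambda u$ lie in $W_1$. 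Write $a_\lambda w'=\psi(\lambda,\partial)w'$. The relation $h(\partial+\lambda)\psi(\lambda,\partial)=\phi_a(\lambda)h(\partial)$ together with a comparison of $\partial$-degrees shows that $\psi$ is independent of $\partial$. One then applies the induction hypothesis to $W/W_1$ and pulls the flag back.

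\emph{Existence of the eigenvector.} This is the main obstacle. My approach would be to pass to the extended annihilation algebra $\mathcal{L}=\mathbb{C}\partial\ltimes\operatorname{span}\{a_{(m)}\}$, acting on $W$ through the $m$-th products of the action. Its brackets are determined by $[a_{(m)},b_{(n)}]=\sum_j\binom{m}{j}(a_{(j)}b)_{(m+n-j)}$ and $[\partial,a_{(m)}]=-m\,a_{(m-1)}$. Solvability of $R$ passes to the annihilation part, since its derived series is spanned by the modes of $R^{(k)}$. Finiteness of $R$ together with locality ($a_{(m)}w=0$ for $m\gg0$) gives an ideal $\mathcal{L}_{N}$, the span of the high modes, acting locally nilpotently. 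I would try to apply the classical Lie theorem to the finite-dimensional solvable quotient acting on the $\mathcal{L}_N$-invariants. Specifically, I would consider $W^{\mathcal{L}_N}$ intersected with the elements of bounded $\partial$-degree, and use the relation $[\partial,a_{(m)}]=-m a_{(m-1)}$ to see that the low modes preserve a suitable finite-dimensional subspace. This would produce a common eigenvector $w$ of all $a_{(m)}$ whose eigenvalues vanish for $m$ large, and these assemble into $\phi_a(\lambda)=\sum_m \lambda^m\phi_{a,(m)}/m!$. The delicate point is producing a nonzero finite-dimensional subspace that is invariant under the low modes despite the shift by $\partial$. If a direct construction fails, I would follow Cheng–Kac/D'Andrea–Kac and work with the dual module $W^*=\operatorname{Hom}_{\mathbb{C}[\partial]}(W,\mathbb{C}[\partial])$ or with the fibers $W/(\partial-c)W$, treating the annihilation algebra there.
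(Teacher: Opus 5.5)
Your reduction steps are correct. The top-coefficient argument shows that $R$ kills $\mathrm{Tor}(V)$, and $W=V/\mathrm{Tor}(V)$ is free. The saturation of $\mathbb{C}[\partial]w$ is an $R$-stable direct summand on which $R$ acts through a $\partial$-free $\phi_a(\lambda)$, and induction on the rank produces the flag. This is the standard way to deduce the flag statement from the conformal Lie theorem of D'Andrea--Kac; the paper itself gives no proof and only quotes their Corollary 8.3.

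The trouble is that the conformal Lie theorem is the entire content of the statement: the existence of $w\neq 0$ with $a_\lambda w=\phi_a(\lambda)w$ for all $a\in R$. Your proposal does not prove it. Your last paragraph is a list of strategies rather than an argument. The decisive step is a nonzero finite-dimensional subspace of $W$ stable under \emph{every} mode $a_{(m)}$, to which the classical Lie theorem could then be applied via the finite-dimensional solvable image of the annihilation algebra. That is exactly the step you leave open.

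The mechanism you sketch also fails as stated, for three reasons.

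\textbf{The span of the high modes is not an ideal of the annihilation algebra $\mathcal{L}$.} Over all $a\in R$ it is all of $\mathcal{L}$, because $(\partial^{N-j}a)_{(N)}=(-1)^{N-j}\tfrac{N!}{j!}\,a_{(j)}$. Over a fixed $\mathbb{C}[\partial]$-basis, the $\partial$'s in the structure constants lower indices. For example, in the nilpotent algebra with $[B_\lambda B]=(\partial+2\lambda)A$ and $A$ central, one gets $[B_{(0)},B_{(n)}]=-n\,A_{(n-1)}$. So $W^{\mathcal{L}_N}$ is in general stable only under a finite-codimensional subalgebra of $\mathcal{L}$, not under the low modes. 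Yet $a_\lambda w=\phi_a(\lambda)w$ requires $w$ to be an eigenvector of the low modes as well.

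\textbf{$W^{\mathcal{L}_N}$ need not be finite-dimensional.} It contains the trivial part $\{w : a_\lambda w=0\ \forall a\in R\}$. A finite-dimensional solvable Lie algebra acting on an infinite-dimensional space need not have any common eigenvector; already $\mathbb{C}\partial$ acting on $W$ has none.

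\textbf{Intersecting with bounded $\partial$-degree destroys invariance.} Low modes raise $\partial$-degree as soon as some $a_\lambda e_j$ depends on $\partial$.

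To close the gap you would have to do three things:
\begin{enumerate}
\item split off the trivial part;
\item prove that the invariants are then finite-dimensional;
\item produce a nonzero finite-dimensional subspace stable under all modes, for instance from finite-codimensional \emph{ideals} of $\mathcal{L}$ interleaved with the mode filtration.
\end{enumerate}
That is precisely the substance of D'Andrea--Kac's argument. Otherwise you must cite it, as the paper does.
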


\delete{Let $R$ be a (Lie or left-symmetric) conformal algebra. Define $\operatorname{Coeff}(R)$ to be the quotient of the vector space with basis $\{a_n \mid a\in R,\ n\in\mathbb{Z}\}$ by the subspace spanned over $\mathbb{C}$ by the following elements:
\[
(\alpha a)_n - \alpha a_n,\quad (a+b)_n - a_n - b_n,\quad (\partial a)_n + n a_{n-1},
\]
for all $a,b\in R$, $\alpha\in\mathbb{C}$, $n\in\mathbb{Z}$.
On $\operatorname{Coeff}(R)$ we define an operation by
\[
a_m \cdot b_n = \sum_{j\in\mathbb{N}} \binom{m}{j} (a_{(j)}b)_{m+n-j},
\]
where $a_{(j)}b$ are the coefficients in the formal power series expansion $[a_\lambda b] = \sum_{j\ge0} \frac{\lambda^j}{j!} (a_{(j)}b)$ (or $a_\lambda b = \sum_{j\ge0} \frac{\lambda^j}{j!} (a_{(j)}b)$ in the left-symmetric case). Then $\operatorname{Coeff}(R)$ becomes a (Lie or left-symmetric) algebra (see \cite{H1}).
\begin{pro}
\cite[Theorem 4.7]{H1}
If $A$ is a compatible left-symmetric conformal algebra on a Lie conformal algebra $R$, then $\operatorname{Coeff}(A)$ is a compatible left-symmetric algebra on the Lie algebra $\operatorname{Coeff}(R)$.
\end{pro}}


\section{Compatible left-symmetric conformal algebra structures on solvable Lie conformal algebras of rank two }\mbox{}

In this section, we give a classification of compatible left-symmetric conformal algebra structures on solvable Lie conformal algebras which are free of rank two as $\mathbb{C}[\partial]$-modules.

\begin{pro}
Let $\mathfrak{g}(R)$ be a solvable Lie conformal algebra, which is free of rank two as a $\mathbb{C}[\partial]$-module. Suppose that $(R, \cdot_\lambda \cdot)$ is a compatible left-symmetric conformal algebra on $\mathfrak{g}(R)$. Then there exists a $\mathbb{C}[\partial]$-basis $x$ and $y$ such that $R=\mathbb{C}[\partial] x \oplus \mathbb{C}[\partial] y$ with the following $\lambda$-products:
\begin{eqnarray}
\label{29-thq}&&x_{\lambda}x=f_{1}(\lambda)x,\\
\label{30-thq}&&x_{\lambda}y=A(\lambda,\partial)x+f_{2}(\lambda)y,\\
\label{31-thq}&&y_{\lambda}x=g_{1}(\lambda)x,\\
\label{32-thq}&&y_{\lambda}y=B(\lambda,\partial)x+g_{2}(\lambda)y,
 \end{eqnarray}
 where $f_{1}(\lambda)$, $f_{2}(\lambda)$, $g_{1}(\lambda)$, $g_{2}(\lambda)\in\mathbb{C} [\lambda]$ and $A(\lambda,\partial)$, $B(\lambda,\partial)\in\mathbb{C} [\lambda,\partial]$.
\end{pro}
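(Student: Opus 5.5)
By Proposition \ref{pro-cp}, $R$ is a conformal module of $\mathfrak{g}(R)$ via the left-symmetric $\lambda$-product, $a_\lambda v := a\cdot_\lambda v$. This module is finite: as a $\mathbb{C}[\partial]$-module it is just $R$, which is free of rank two. Since $\mathfrak{g}(R)$ is a finite solvable Lie conformal algebra, Proposition \ref{pro-LT} applies to $V=R$. Because $R$ is free, $\text{Tor}(R)=0$, so we get a flag
\begin{eqnarray*}
0=V_0\subset V_1\subset\cdots\subset V_N=R
\end{eqnarray*}
with each quotient $V_{i+1}/V_i$ free of rank one. Ranks add along such a flag, so $N=2$, giving $0\subset V_1\subset R$ with $V_1$ free of rank one and $R/V_1$ free of rank one.

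Next I will construct the basis. Write $V_1=\mathbb{C}[\partial]x$. Since $R/V_1$ is free, hence projective, the sequence $0\to V_1\to R\to R/V_1\to 0$ splits. Choose $y\in R$ lifting a generator $\overline{y}$ of $R/V_1$; then $R=\mathbb{C}[\partial]x\oplus\mathbb{C}[\partial]y$.

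Then I read off the products. Proposition \ref{pro-LT} says that on $V_1=\mathbb{C}[\partial]x$ (the quotient $V_1/V_0$) every $a\in R$ acts by $a_\lambda x=\phi_a(\lambda)x$ with $\phi_a(\lambda)\in\mathbb{C}[\lambda]$. Taking $a=x$ gives \eqref{29-thq} with $f_1=\phi_x$, and taking $a=y$ gives \eqref{31-thq} with $g_1=\phi_y$. On $R/V_1=\mathbb{C}[\partial]\overline{y}$ we have $a_\lambda\overline{y}=\psi_a(\lambda)\overline{y}$. Hence $x_\lambda y-f_2(\lambda)y\in V_1[\lambda]$ with $f_2=\psi_x$, and $y_\lambda y-g_2(\lambda)y\in V_1[\lambda]$ with $g_2=\psi_y$. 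An element of $V_1[\lambda]$ has the form $A(\lambda,\partial)x$ with $A\in\mathbb{C}[\lambda,\partial]$, and this gives \eqref{30-thq} and \eqref{32-thq}.

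The main point to check is that the flag has exactly length two with $V_1$ free of rank one, which follows from $\text{Tor}(R)=0$ and additivity of rank. A subtle point is that the coefficients of $x$ and $y$ can be taken independent of $\partial$. This holds because Proposition \ref{pro-LT} gives $\phi_a(\lambda)\in\mathbb{C}[\lambda]$, and the coefficient of $y$ in the second-layer products is determined modulo $V_1$ by $\psi_a(\lambda)$.
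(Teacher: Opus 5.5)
Your proposal is correct and follows the paper's proof: you view $R$ as a conformal module over $\mathfrak{g}(R)$ via Proposition~\ref{pro-cp}, apply Proposition~\ref{pro-LT} with $\text{Tor}(R)=0$ to get $0\subset V_1=\mathbb{C}[\partial]x\subset R$, and read off the four $\lambda$-products from the rank-one actions on $V_1$ and on $R/V_1$. You also spell out two points the paper leaves implicit, namely that the flag has length exactly two and that the sequence splits because $R/V_1$ is free.
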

\begin{proof}
By Proposition \ref{pro-cp}, $R$ is a conformal module of  $\mathfrak{g}(R)$. Since $R$ is free as a $\mathbb{C}[\partial]$-module, it has no torsion, i.e. $\text{Tor}(R)=0$. Then by Proposition \ref{pro-LT}, there exists a submodule $V_1$  such that
$0\subset V_1\subset R$ and $V_1$, $R/V_1$ are free of rank one as $\mathbb{C}[\partial]$-modules. Therefore, we assume that $V_1=\mathbb{C}[\partial]x$ and $R/V_1=\mathbb{C}[\partial]\overline{y}$ for some $x$, $y\in R$. Then we have $R=\mathbb{C}[\partial]x\oplus\mathbb{C}[\partial]y$.
By Proposition \ref{pro-LT}, we can assume
\begin{eqnarray*}
x_\lambda x=f_{1}(\lambda)x,\;\;y_{\lambda}x=g_{1}(\lambda)x,\;\;x_\lambda \overline{y}=f_{2}(\lambda)\overline{y},\;\;y_\lambda \overline{y}=g_{2}(\lambda)\overline{y},
\end{eqnarray*}
for some $f_{1}(\lambda)$, $f_{2}(\lambda)$, $g_{1}(\lambda)$ and $g_{2}(\lambda)\in\mathbb{C} [\lambda]$. Hence there exist $A(\lambda,\partial)$, $B(\lambda,\partial)\in\mathbb{C} [\lambda,\partial]$ such that $x_\lambda y=A(\lambda, \partial)x+f_2(\lambda)y$ and $y_{\lambda}y=B(\lambda,\partial)x+g_{2}(\lambda)y$. Then this conclusion follows directly.
\end{proof}

By the definition of left-symmetric conformal algebras, $(R, \cdot_\lambda \cdot)$ with the $\lambda$-product defined by Eqs.~(\ref{29-thq})--(\ref{32-thq}) is a left-symmetric conformal algebra if and only if the following identities hold:
\begin{eqnarray}
    \label{33-thq}&&f_{1}(\lambda)f_1(\lambda+\mu)=f_{1}(\mu)f_1(\lambda+\mu),\\
    \label{34-thq}&&f_{1}(\lambda)f_2(\lambda+\mu)=f_{1}(\mu)f_2(\lambda+\mu),\\
    \label{35-thq}&&f_1(\lambda+\mu)g_{1}(\lambda)=f_1(\lambda+\mu)A(\mu,-\lambda-\mu)+f_{2}(\mu)g_1(\lambda+\mu),\\
    \label{38-thq}&&f_2(\lambda+\mu)g_{1}(\mu)=f_2(\lambda+\mu)A(\lambda,-\lambda-\mu)+f_{2}(\lambda)g_2(\lambda+\mu),\\
    \label{39-thq}&&f_1(\lambda+\mu)B(\lambda,-\lambda-\mu)+g_1(\lambda+\mu)g_{2}(\lambda)=f_1(\lambda+\mu)B(\mu,-\lambda-\mu)+g_1(\lambda+\mu)g_{2}(\mu),\\
    \label{40-thq}&&f_2(\lambda+\mu)B(\lambda,-\lambda-\mu)+g_2(\lambda)g_{2}(\lambda+\mu)=f_2(\lambda+\mu)B(\mu,-\lambda-\mu)+g_2(\lambda+\mu)g_{2}(\mu),\\
   \label{41-thq}&&f_{1}(\lambda)A(\lambda+\mu,\partial)-f_{1}(\lambda)A(\mu,\lambda+\partial)-f_{2}(\mu)A(\lambda,\partial)\notag\\
   &&\quad=f_{1}(\mu)A(\lambda+\mu,\partial)-f_{1}(\mu)A(\lambda,\mu+\partial)-f_{2}(\lambda)A(\mu,\partial),\\
    \label{43-thq}&&g_{1}(\mu)A(\lambda+\mu,\partial)-g_{1}(\mu)A(\lambda,\mu+\partial)-f_{2}(\lambda)B(\mu,\partial)\notag\\
    &&\quad=A(\lambda,-\lambda-\mu)A(\lambda+\mu,\partial)+f_{2}(\lambda)B(\lambda+\mu,\partial)-f_{1}(\lambda)B(\mu,\lambda+\partial)-g_{2}(\mu)A(\lambda,\partial),\\
    \label{44-thq}&&A(\lambda+\mu,\partial)B(\lambda,-\lambda-\mu)+g_{2}(\lambda)B(\lambda+\mu,\partial)-g_{1}(\lambda)B(\mu,\lambda+\partial)-g_{2}(\mu)B(\lambda,\partial)\notag\\
    &&\quad=A(\lambda+\mu,\partial)B(\mu,-\lambda-\mu)+g_{2}(\mu)B(\lambda+\mu,\partial)-g_{1}(\mu)B(\lambda,\mu+\partial)-g_{2}(\lambda)B(\mu,\partial).\end{eqnarray}
Therefore, the classification of compatible left-symmetric conformal algebra structures on solvable Lie conformal algebras which are free of rank two as $\mathbb{C}[\partial]$-modules is equivalent to finding all solutions of Eqs. (\ref{33-thq})-(\ref{44-thq}).

\begin{lem}
For $f_{1}(\lambda)$, we have $f_{1}(\lambda)=0$ or $f_{1}(\lambda)=a \in \mathbb{C} \setminus \{0\}$.
\delete{For $f_{1}(\lambda)$, there are the following two cases:\\
\noindent{Case 1. $f_{1}(\lambda)=0$;\\
Case 2. $f_{1}(\lambda)=a \in \mathbb{C}^\times$.}}
\end{lem}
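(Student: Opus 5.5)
The plan is to use only Eq.~(\ref{33-thq}), namely
\[
f_{1}(\lambda)f_1(\lambda+\mu)=f_{1}(\mu)f_1(\lambda+\mu),
\]
viewed as an identity in the polynomial ring $\mathbb{C}[\lambda,\mu]$. If $f_1(\lambda)=0$ there is nothing to prove, so we assume $f_1(\lambda)\neq 0$ and show that $f_1$ is a nonzero constant.

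Since $f_1\neq 0$, the polynomial $f_1(\lambda+\mu)$ is a nonzero element of $\mathbb{C}[\lambda,\mu]$. This ring is an integral domain, so we may cancel $f_1(\lambda+\mu)$ from both sides and obtain $f_1(\lambda)=f_1(\mu)$ in $\mathbb{C}[\lambda,\mu]$. The left side involves only $\lambda$ and the right side only $\mu$, so comparing coefficients of $\lambda^n$ for $n\geq 1$ forces all non-constant coefficients of $f_1$ to vanish. Equivalently, setting $\mu=0$ gives $f_1(\lambda)=f_1(0)$. Hence $f_1(\lambda)=a$ with $a=f_1(0)\in\mathbb{C}$, and $a\neq 0$ because $f_1\neq0$.

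The only point that needs care is the cancellation step: we must observe that $f_1(\lambda+\mu)$ is nonzero as a two-variable polynomial whenever $f_1$ is nonzero. This holds because the substitution $\lambda\mapsto\lambda+\mu$ is injective; for instance, setting $\mu=0$ recovers $f_1(\lambda)$. Beyond this there is no real obstacle, and the lemma follows directly.
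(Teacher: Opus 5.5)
Your proposal is correct and follows the paper's own argument: Eq.~(\ref{33-thq}) gives $\bigl(f_1(\lambda)-f_1(\mu)\bigr)f_1(\lambda+\mu)=0$ in the integral domain $\mathbb{C}[\lambda,\mu]$, so either $f_1=0$ or $f_1(\lambda)=f_1(\mu)$, i.e.\ $f_1$ is a nonzero constant. Your explicit check that $f_1(\lambda+\mu)\neq 0$ whenever $f_1\neq 0$ fills in the one detail the paper leaves implicit.
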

\begin{proof}
  It is straightforward by Eq.~(\ref{33-thq}). \delete{ From Eq.~(\ref{33-thq}) we have $[f_{1}(\lambda)-f_{1}(\mu )]f_{1}(\lambda+\mu)=0$, which implies that either $f_{1}(\lambda)=0$ or $f_{1}(\lambda)=f_{1}(\mu)$ for all $\lambda,\mu$, i.e. $f_{1}$ is a constant function. Therefore, we get this lemma. Without loss of generality, let $f_{1}(\lambda)=a \in \mathbb{C} \setminus \{0\}$ when $f_{1}(\lambda) \neq 0$.}
\end{proof}
\begin{lem}\label{lem-l2}
If $f_1(\lambda)=0$, then we have the following cases:

\noindent \text{(A)} $g_{1}(\lambda)=0$;\\
\text{(B)} $g_{1}(\lambda) \neq 0$, \ $g_{2}(\lambda)=b \in \mathbb{C}$, \ $f_{2}(\lambda)=0$.
\end{lem}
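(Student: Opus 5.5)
With $f_1=0$ most of the system collapses, and only Eqs.~(\ref{35-thq}), (\ref{38-thq}) and (\ref{39-thq}) are needed. The plan is to assume $g_1(\lambda)\neq 0$ (otherwise we are in case (A)) and then show in turn that $f_2=0$ and that $g_2$ is constant.

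First, setting $f_1=0$ in Eq.~(\ref{35-thq}) gives
\[
f_2(\mu)\,g_1(\lambda+\mu)=0 .
\]
Since $g_1\neq 0$, the polynomial $g_1(\lambda+\mu)$ is a nonzero element of the integral domain $\mathbb{C}[\lambda,\mu]$. Hence $f_2(\mu)=0$, which proves $f_2(\lambda)=0$.

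Second, setting $f_1=0$ in Eq.~(\ref{39-thq}) gives
\[
g_1(\lambda+\mu)\bigl(g_2(\lambda)-g_2(\mu)\bigr)=0 .
\]
Again $g_1(\lambda+\mu)\neq 0$, so $g_2(\lambda)=g_2(\mu)$ as polynomials in two independent variables. This forces $g_2(\lambda)=b$ for some $b\in\mathbb{C}$, possibly zero, which is exactly case (B).

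As a consistency check, Eq.~(\ref{38-thq}) is automatically satisfied once $f_2=0$. I expect no real obstacle here: the only point that needs care is that $g_1(\lambda+\mu)$ is a nonzero polynomial in $\mathbb{C}[\lambda,\mu]$ whenever $g_1\neq 0$, which is what justifies cancelling it. The remaining equations, which involve $A$ and $B$, are left for the later analysis in each case and are not needed for this lemma.
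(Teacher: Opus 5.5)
Your proof is correct and follows the paper's argument: setting $f_1=0$ in Eqs.~(\ref{35-thq}) and (\ref{39-thq}) gives $f_2(\mu)g_1(\lambda+\mu)=0$ and $g_1(\lambda+\mu)\bigl(g_2(\lambda)-g_2(\mu)\bigr)=0$, and you cancel the nonzero factor $g_1(\lambda+\mu)$ in the integral domain $\mathbb{C}[\lambda,\mu]$. Your check of Eq.~(\ref{38-thq}) is harmless but not needed for the lemma.
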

\begin{proof}
If $f_{1}(\lambda)=0$, then Eqs.~(\ref{35-thq}) and (\ref{39-thq}) can be simplified into
\begin{eqnarray}
    \label{45-thq}&& f_{2}(\mu)g_{1}(\lambda+\mu)=0,\\
    \label{47-thq}&& g_{1}(\lambda+\mu)g_{2}(\lambda)=g_{1}(\lambda+\mu)g_{2}(\mu).
\end{eqnarray}
By Eq.~(\ref{47-thq}), we have $(g_{2}(\lambda)-g_{2}(\mu)) g_{1}(\lambda+\mu)=0$,
which implies that either $g_{1}(\lambda)=0$ or
$g_{2}(\lambda)=g_{2}(\mu)$,
i.e. $g_{2}(\lambda)=b\in \mathbb{C}$.
By Eq.~(\ref{45-thq}), we obtain $g_{1}(\lambda)=0$ or $f_{2}(\lambda)=0$. Then this conclusion holds.
\end{proof}
Now, we split Case (A) of Lemma \ref{lem-l2} into two subcases:
$f_{2}(\lambda)=0$ and $f_{2}(\lambda) \neq 0$.
\begin{pro}\label{pro-p1}
In Case (A) of Lemma \ref{lem-l2}, if $f_{2}(\lambda)=0$, then we have the following cases:\\
\noindent \textup{(A1)} $g_{2}(\lambda)=A(\lambda,\partial)=0$, $B(\lambda,\partial)$ can be an arbitrary polynomial in $\mathbb{C}[\lambda, \partial]$;\\
\textup{(A2)} $g_{2}(\lambda)=b\in\mathbb{C}\setminus \{0\}$,  $A(\lambda,\partial)=0$, $B(\lambda,\partial)=V(\partial) \in \mathbb{C}[\partial]$;
\delete{where $V(\partial)$ is an arbitrary polynomial in $\partial$;}\\
\textup{(A3)} $g_{2}(\lambda)=A(\lambda,\partial)=b \in \mathbb{C}\setminus\{0\}$, $B(\lambda,\partial)=U(\lambda)+V(\partial)$,
where $U(\lambda) \in \mathbb{C}[\lambda]$ and $V(\partial) \in \mathbb{C}[\partial]$.
\end{pro}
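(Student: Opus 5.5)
We specialize the system (\ref{33-thq})--(\ref{44-thq}) to $f_1=f_2=g_1=0$. Equations (\ref{33-thq}), (\ref{34-thq}), (\ref{35-thq}), (\ref{38-thq}), (\ref{39-thq}) and (\ref{41-thq}) then hold trivially, and only three equations survive:
\begin{eqnarray*}
&&g_2(\lambda)g_2(\lambda+\mu)=g_2(\lambda+\mu)g_2(\mu),\\
&&A(\lambda,-\lambda-\mu)A(\lambda+\mu,\partial)=g_2(\mu)A(\lambda,\partial),\\
&&A(\lambda+\mu,\partial)\big(B(\lambda,-\lambda-\mu)-B(\mu,-\lambda-\mu)\big)+(g_2(\lambda)-g_2(\mu))B(\lambda+\mu,\partial)\\
&&\quad=g_2(\mu)B(\lambda,\partial)-g_2(\lambda)B(\mu,\partial).
\end{eqnarray*}
The first of these is exactly the equation for $f_1$ in Eq.~(\ref{33-thq}), so the preceding lemma gives $g_2=0$ or $g_2=b\in\mathbb{C}\setminus\{0\}$. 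We treat the two cases in turn, and at the end we check that each listed solution satisfies all three equations, which proves the converse.

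\emph{Case $g_2=0$.} The second equation becomes $A(\lambda,-\lambda-\mu)A(\lambda+\mu,\partial)=0$. Since $\mathbb{C}[\lambda,\mu,\partial]$ is a domain, either $A=0$ or $A(\lambda,-\lambda-\mu)\equiv0$. The substitution $(\lambda,\mu)\mapsto(\lambda,-\lambda-\mu)$ is an invertible linear change of variables, so the second alternative also forces $A=0$. With $A=0$ and $g_2=0$ the third equation is vacuous, so $B$ is arbitrary. This is (A1).

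\emph{Case $g_2=b\neq0$.} Write $A(\lambda,\partial)=\sum_k a_k(\lambda)\partial^k$ and put $c(\lambda,\mu)=A(\lambda,-\lambda-\mu)$. Comparing coefficients of $\partial^k$ in the second equation gives
$$c(\lambda,\mu)\,a_k(\lambda+\mu)=b\,a_k(\lambda).$$
Suppose $A\neq0$ and pick $k$ with $a_k\neq0$. The right-hand side does not involve $\mu$, so $\deg_\mu c+\deg a_k=0$. Hence every $a_k$ is constant, so $A=A(\partial)$. Moreover $c=A(-\lambda-\mu)$ is constant, so $A$ is a constant $c$, and then $c^2=bc$ gives $c=b$. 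Thus $A=0$ or $A=b$.

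If $A=0$, the third equation reduces to $b\big(B(\lambda,\partial)-B(\mu,\partial)\big)=0$. Hence $B$ does not depend on $\lambda$, i.e. $B=V(\partial)$. This is (A2).

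If $A=b$, the third equation becomes
$$B(\lambda,\partial)-B(\mu,\partial)=B(\lambda,-\lambda-\mu)-B(\mu,-\lambda-\mu).$$
Setting $\mu=0$ shows that $B(\lambda,\partial)-B(0,\partial)=B(\lambda,-\lambda)-B(0,-\lambda)$, which depends only on $\lambda$; call it $U(\lambda)$. Then $B=U(\lambda)+V(\partial)$ with $V(\partial)=B(0,\partial)$. Conversely, any such $B$ makes both sides of the displayed identity equal to $U(\lambda)-U(\mu)$. This is (A3).

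The only step that is not mechanical is the degree comparison in the second equation when $g_2=b$, which pins $A$ down to a constant. Everything else is direct substitution.
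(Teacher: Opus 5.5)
Your proof is correct and follows essentially the same route as the paper: you reduce to the three surviving equations, split on $g_2=0$ versus $g_2=b\neq 0$, force $A\in\{0,b\}$ by a $\mu$-degree count, and then solve for $B$ (setting $\mu=0$ in the case $A=b$). Your coefficient-wise degree comparison in $\partial$ and your explicit domain/invertible-substitution argument for $g_2=0$ simply spell out steps that the paper states more briefly.
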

\begin{proof}
In Case (A) of Lemma \ref{lem-l2}, when $f_{2}(\lambda)=0$,
we have $f_{1}(\lambda)=g_{1}(\lambda)=f_{2}(\lambda)=0$. Then
Eqs.~(\ref{33-thq})--(\ref{44-thq}) can be simplified into
\begin{eqnarray}
    \label{48-thq}&&g_{2}(\lambda)g_{2}(\lambda+\mu)=g_{2}(\mu)g_{2}(\lambda+\mu),\\
    \label{50-thq}&&g_{2}(\mu)A(\lambda,\partial)=A(\lambda,-\lambda-\mu)A(\lambda+\mu,\partial ),\\
    \label{51-thq}&&A(\lambda+\mu,\partial)B(\lambda,-\lambda-\mu)+g_{2}(\lambda)B(\lambda+\mu,\partial)-g_{2}(\mu)B(\lambda,\partial)\notag\\
    &&\quad=A(\lambda+\mu,\partial)B(\mu,-\lambda-\mu)+g_{2}(\mu)B(\lambda+\mu,\partial)-g_{2}(\lambda)B(\mu,\partial).
\end{eqnarray}
By Eq.~(\ref{48-thq}) we have $(g_{2}(\lambda)-g_{2}(\mu))g_{2}(\lambda+\mu)=0$, which implies that either $g_{2}(\lambda)=0$ or $g_{2}(\lambda)=g_{2}(\mu)$, i.e. $g_{2}(\lambda)=b \in \mathbb{C}\setminus\{0\}$.

If $g_{2}(\lambda)=0$, Eq.~(\ref{50-thq}) becomes
\begin{eqnarray}
    \label{53-thq}&&A(\lambda,-\lambda-\mu)A(\lambda+\mu,\partial )=0.
\end{eqnarray}
By Eq. (\ref{53-thq}), we have $A(\lambda,\partial)=0$. Then Eq. (\ref{51-thq}) naturally holds. Hence $B(\lambda,\partial)$ can be an arbitrary polynomial in $\mathbb{C}[\lambda,\partial]$. Therefore, we obtain Case (A1).

\delete{We claim $A(\lambda,\partial)=0$.
Suppose, to the contrary, that $A(\lambda,\partial)\neq 0$.
Since the change of variables $(\lambda,\mu,\partial)\mapsto(\lambda+\mu,\partial)$ is an automorphism of
$\mathbb{C}[\lambda,\mu,\partial]$, the polynomial $A(\lambda+\mu,\partial)$ is also nonzero.
From $A(\lambda,-\lambda-\mu)A(\lambda+\mu,\partial)=0$ and the fact that a polynomial ring over a field
is an integral domain, we must have $A(\lambda,-\lambda-\mu)=0$.
Write $A(\lambda,\partial)=\sum_i A_i(\lambda)\,\partial^i$ with $A_i(\lambda)\in\mathbb{C}[\lambda]$.
Then $0=A(\lambda,-\lambda-\mu)=\sum_i A_i(\lambda)(-\lambda-\mu)^i.$ For any fixed $\lambda$, the right side is a polynomial in $\mu$. Because it vanishes identically, all its coefficients must be zero. Expanding $(-\lambda-\mu)^i$ and comparing coefficients shows that $A_i(\lambda)=0$ for every $i$ and every $\lambda$. Hence $A(\lambda,\partial)=0$, contradicting our assumption.
Therefore $A(\lambda,\partial)=0$. In the case $g_{2}(\lambda)=0$ and $A(\lambda,\partial)=0$, Eq.~(\ref{51-thq}) holds identically, so there is no restriction on $B(\lambda,\partial)$. Hence $B(\lambda,\partial)$ can be an arbitrary polynomial in $\mathbb{C}[\lambda,\partial]$. Therefore, according to the above discussion, we have Case(A1)  in this theorem.}

If $g_{2}(\lambda)=b \in \mathbb{C}\setminus\{0\}$, Eq.~(\ref{50-thq}) can be simplified into
\begin{eqnarray}
    \label{55-thq}&&A(\lambda,-\lambda-\mu)A(\lambda+\mu,\partial )=bA(\lambda,\partial).
\end{eqnarray}
Obviously, if $A(\lambda,\partial)=0$, then Eq. (\ref{55-thq}) holds.
If $A(\lambda,\partial)\neq 0$, by comparing the degree of $\mu$, we have $\text{deg}_\lambda(A(\lambda, \partial))+\text{deg}_\partial(A(\lambda, \partial))=0$. Then we have $\text{deg}_\lambda(A(\lambda, \partial))=\text{deg}_\partial(A(\lambda, \partial))=0$. Therefore, $A(\lambda, \partial)=c\in \mathbb{C}$. Then by  Eq. (\ref{55-thq}), we obtain $c=b$.  Therefore, we have $A(\lambda,\partial)=0$ or $A(\lambda,\partial)=b$.

\delete{We claim that $A(\lambda,\partial)=0$ or $A(\lambda,\partial)=b$.
If $A(\lambda,\partial)=0$, the statement holds.
Now we discuss the case $A(\lambda,\partial)\neq 0$.
Let $A(\lambda,\partial)$ as a polynomial in $\partial$:
$A(\lambda,\partial)=\sum_{i=0}^{m}A_i(\lambda)\partial^i$,
where $A_m(\lambda)\neq0$ (since $A(\lambda,\partial)\neq 0$).
We assert that $m=0$.
Suppose, to the contrary, that $m\geq1$.
Substituting $A(\lambda,\partial)=\sum_{i=0}^{m}A_i(\lambda)\partial^i$
into Eq.~(\ref{55-thq}) and comparing the coefficients of $\partial^m$ yields
\begin{eqnarray}
    \label{56-thq}&&A(\lambda,-\lambda-\mu)\,A_m(\lambda+\mu)=b\,A_m(\lambda).
\end{eqnarray}
Comparing the degrees of $\mu$ on both sides shows that
$A_m(\lambda+\mu)$ must be a constant, and $A(\lambda,-\lambda-\mu)$
cannot involve $\mu$, i.e.\
$A(\lambda,-\lambda-\mu)=A_0(\lambda)$ with $A_0(\lambda)\in\mathbb{C}[\lambda]$.
This contradicts $m\geq1$, hence $m=0$.
Thus, in this case, $A(\lambda,\partial)=A_0(\lambda)$
(we denote it by $A_0(\lambda)$ for clarity),
$A_0(\lambda)\in\mathbb{C}[\lambda]$ and $A_0(\lambda)\neq0$.
Plugging this into Eq.~(\ref{55-thq}) gives
$A_0(\lambda)\,A_0(\lambda+\mu)=b\,A_0(\lambda)$.
Since $A_0(\lambda)\neq0$, we can cancel it to obtain
$A_0(\lambda+\mu)=b$, which means
$A(\lambda,\partial)=A_0(\lambda)=b$.}

When $f_1(\lambda)=f_2(\lambda)=g_1(\lambda)=A(\lambda, \partial)=0$ and $g_{2}(\lambda)=b\neq 0 $,  Eqs.~(\ref{33-thq})--(\ref{44-thq}) can be simplified into
\begin{eqnarray}
    \label{57-thq}&&bB(\lambda,\partial)=bB(\mu,\partial).
\end{eqnarray}
Since $b \neq 0$, we have $B(\lambda,\partial)=B(\mu,\partial)$,
which means that $B(\lambda,\partial)=V(\partial)$,
where $V(\partial)$ is an arbitrary polynomial in $\mathbb{C}[\partial]$. Therefore, we have Case (A2).

When $f_1(\lambda)=f_2(\lambda)=g_1(\lambda)=0$ and $g_{2}(\lambda)=A(\lambda,\partial)=b \neq0$, Eqs.~(\ref{33-thq})--(\ref{44-thq}) can be simplified into
\delete{\begin{eqnarray*}
    &&B(\lambda,-\lambda-\mu)-B(\lambda,\partial)=B(\mu,-\lambda-\mu)-B(\mu,\partial),
\end{eqnarray*}
i.e.}
\begin{eqnarray}
    \label{58-thq}&&B(\lambda,-\lambda-\mu)-B(\mu,-\lambda-\mu)=B(\lambda,\partial)-B(\mu,\partial).
\end{eqnarray}
If $B(\lambda,\partial)=0$, then Eq.~(\ref{58-thq}) naturally holds. We now consider the case $B(\lambda,\partial) \neq 0$.
Let $\mu=0$ in Eq. (\ref{58-thq}). Then we obtain
\begin{eqnarray}\label{eq-p1}
B(\lambda,\partial)=B(\lambda,-\lambda)-B(0,-\lambda)+B(0,\partial).
\end{eqnarray}
Setting $U(\lambda)=B(\lambda,-\lambda)-B(0,-\lambda)$ and $V(\partial)=B(0,\partial)$, we have $B(\lambda,\partial)=U(\lambda)+V(\partial)$. Then one can check that Eq. (\ref{58-thq}) holds. Therefore, $U(\lambda)\in\mathbb{C}[\lambda]$ and $V(\partial)\in\mathbb{C}[\partial]$ can be arbitrary polynomials. Then we have Case (A3).

The proof is completed.

\delete{Let $B(\lambda,\partial)=\sum_{i=0}^{p}B_i(\lambda)\partial^i$,
where $B_p(\lambda)\neq0$ (since $B(\lambda,\partial)\neq 0$). If \(p = 0\), Eq.~(\ref{58-thq}) is automatically satisfied.  We now consider the case \(p \ge 1\). Substituting $B(\lambda,\partial)=\sum_{i=0}^{p}B_i(\lambda)\partial^i$ into Eq.~(\ref{58-thq})  yields
\begin{eqnarray}
    \label{59-thq}&&\sum_{i=0}^{p}\bigl[B_i(\lambda)-B_i(\mu)\bigr](-\lambda-\mu)^i
= \sum_{i=0}^{p}\bigl[B_i(\lambda)-B_i(\mu)\bigr]\partial^i.
\end{eqnarray}
By comparing the coefficients of $\partial^i$, we see that for all $i\ge 1, B_i(\lambda)-B_i(\mu)=0$ for all $\lambda,\mu$,
 each $B_i(\lambda)$ ($i\ge 1$) is a constant; let $B_i(\lambda)=b_i\in\mathbb{C}$. Hence $ B(\lambda,\partial)=B_0(\lambda)+\sum_{i=1}^{p} b_i\,\partial^i.$
Notice that the constant term of $B_0(\lambda)$ can be absorbed into the polynomial in $\partial$.}

\delete{
Therefore, $B(\lambda,\partial)$ takes the form
\begin{eqnarray}
    \label{60-thq}&&B(\lambda,\partial)=U(\lambda)+V(\partial),
\end{eqnarray}
where $U(\lambda)\in\mathbb{C}[\lambda]$ and $V(\partial)\in\mathbb{C}[\partial]$ are arbitrary polynomials. Therefore, we have Case (A3) in this theorem.}
\end{proof}
\begin{lem}\label{lem-l3}
Suppose that $f_2(\lambda)\neq 0$ and $A(\lambda,\partial)\in\mathbb{C}[\lambda,\partial]$ satisfies
\begin{eqnarray}\label{eq-q2}f_2(\lambda)A(\mu,\partial)=f_2(\mu)A(\lambda,\partial).
\end{eqnarray}
Then there exists a polynomial $D(\partial)\in\mathbb{C}[\partial]$ such that $A(\lambda,\partial)=f_2(\lambda)D(\partial)$.
\end{lem}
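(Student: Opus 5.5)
Since $f_2(\lambda)\neq 0$, we can pick a complex number $\mu_0$ with $f_2(\mu_0)\neq 0$; a nonzero polynomial has only finitely many roots, so such a $\mu_0$ exists. We then specialize $\mu=\mu_0$ in Eq.~(\ref{eq-q2}), which is legitimate because it is an identity in $\mathbb{C}[\lambda,\mu,\partial]$. This gives
\begin{eqnarray*}
f_2(\lambda)A(\mu_0,\partial)=f_2(\mu_0)A(\lambda,\partial).
\end{eqnarray*}
Dividing by the nonzero constant $f_2(\mu_0)$, we obtain
\begin{eqnarray*}
A(\lambda,\partial)=f_2(\lambda)\,\frac{A(\mu_0,\partial)}{f_2(\mu_0)}.
\end{eqnarray*}
We therefore set $D(\partial)=f_2(\mu_0)^{-1}A(\mu_0,\partial)$. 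This lies in $\mathbb{C}[\partial]$, since $A(\mu_0,\partial)$ is a polynomial in $\partial$ alone. This gives the required factorization $A(\lambda,\partial)=f_2(\lambda)D(\partial)$.

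As an alternative that avoids choosing a point, we could view Eq.~(\ref{eq-q2}) in $\mathbb{C}(\partial)[\lambda,\mu]$. The identity says the two-variable expression $A(\lambda,\partial)/f_2(\lambda)$ in the field $\mathbb{C}(\lambda,\partial)$ is unchanged when $\lambda$ is replaced by $\mu$. Hence it depends only on $\partial$, and a polynomiality argument would then be needed to conclude that $D\in\mathbb{C}[\partial]$. The point-evaluation argument makes this polynomiality immediate, so we will use it.

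There is essentially no obstacle here. The only point to check is that specialization is valid, namely that a polynomial identity in the three variables $\lambda,\mu,\partial$ remains true after substituting the scalar $\mu_0$ for $\mu$. It is also worth noting that $D(\partial)$ may be zero; this happens exactly when $A=0$, and that case is consistent with the statement.
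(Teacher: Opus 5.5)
Your proof is correct, and it takes a slightly different and tighter route than the paper's.

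The paper first disposes of $A=0$. It then asserts, without justification, that $f_2(\lambda)$ divides $A(\lambda,\partial)$. Finally it uses separation of variables: the common value of $A(\lambda,\partial)/f_2(\lambda)=A(\mu,\partial)/f_2(\mu)$ is independent of both $\lambda$ and $\mu$, so it is a function of $\partial$ alone. This is essentially the ``alternative'' you sketch in your second paragraph.

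That route needs the divisibility claim to know that the common quotient is a polynomial rather than just an element of $\mathbb{C}(\partial)$. One would justify it by unique factorization in $\mathbb{C}[\lambda,\mu,\partial]$: the prime factors $\lambda-r$ of $f_2(\lambda)$ do not divide $f_2(\mu)$. Alternatively, and most simply, one can use exactly your specialization.

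Your evaluation at a point $\mu_0$ with $f_2(\mu_0)\neq 0$ does everything in one step. It gives the factorization and the polynomiality of $D(\partial)=f_2(\mu_0)^{-1}A(\mu_0,\partial)$, it covers $A=0$ automatically, and it needs no factorization theory.

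The only extra content in the paper's proof is the remark that, conversely, every $f_2(\lambda)D(\partial)$ satisfies Eq.~(\ref{eq-q2}). The statement does not ask for this, and it is immediate.
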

\begin{proof}
If $A(\lambda,\partial)=0$, then we can take $D(\partial)=0$. Now assume \(A(\lambda,\partial)\neq 0\). Then $f_2(\lambda)$ divides $A(\lambda, \partial)$.
Then we obtain
\[
\frac{A(\lambda,\partial)}{f_2(\lambda)}=\frac{A(\mu,\partial)}{f_2(\mu)}.
\]
The left-hand side does not depend on \(\mu\), and the right-hand side does not depend on \(\lambda\). Hence their common value depends only on \(\partial\). Denote this common value by \(D(\partial)\). Then $A(\lambda,\partial)=f_2(\lambda)D(\partial)$.  Obviously, $A(\lambda,\partial)=f_2(\lambda)D(\partial)$ for any $D(\partial)\in \mathbb{C}[\partial]$ satisfies Eq. (\ref{eq-q2}). Then this conclusion holds.
\delete{Write $f_2(\lambda)=\sum_{i=0}^{s}c_i\lambda^i,c_s\neq0.$
Since \(D(\partial)\neq0\) and does not depend on \(\lambda\), we have $\deg_\lambda A(\lambda,\partial)=\deg_\lambda f_2(\lambda)=s.$
Comparing the coefficients of \(\lambda^s\) on both sides yields $A_s(\partial)=c_sD(\partial),$
where \(A_s(\partial)\) is the coefficient of \(\lambda^s\) in \(A(\lambda,\partial)\). Therefore
$D(\partial)=\frac{A_s(\partial)}{c_s}\in\mathbb{C}[\partial].$
Thus $A(\lambda,\partial)=f_2(\lambda)D(\partial),$
as required.}
\end{proof}
\delete{\begin{proof}
If $A(\lambda,\partial)=0$, the statement holds with $D(\partial)=0$. Now assume $A(\lambda,\partial)\neq 0$.
Let $f_2(\lambda)=\sum_{i=0}^{s} c_i \lambda^i$ with $c_s\neq 0$, and $A(\lambda,\partial)=\sum_{j=0}^{m} A_j(\partial)\lambda^j$ with $A_m(\partial)\neq 0$. For convenience, set $c_i=0$ for $i>s$ and $A_j(\partial)=0$ for $j>m$.
Substituting these expressions into the given identity gives
\begin{eqnarray}
    \label{61-thq}&&\Bigl(\sum_{i=0}^{s} c_i \lambda^i\Bigr)\Bigl(\sum_{j=0}^{m} A_j(\partial)\mu^j\Bigr)
= \Bigl(\sum_{i=0}^{s} c_i \mu^i\Bigr)\Bigl(\sum_{j=0}^{m} A_j(\partial)\lambda^j\Bigr).
\end{eqnarray}
Expanding and comparing the coefficients of $\lambda^i\mu^j$ yields
\begin{eqnarray}
    \label{62-thq}
c_i A_j(\partial) = c_j A_i(\partial) \qquad (\forall i,j).
\end{eqnarray}
We first show that $m=s$.
If $m>s$, then taking $i=m$, $j=s$ in Eq.~(\ref{62-thq}) gives
$c_m A_s(\partial) = c_s A_m(\partial)=0$, a contradiction.
Similarly, if $m<s$, taking $i=m$, $j=s$ gives $c_m A_s(\partial) = c_s A_m(\partial)=0$,  again a contradiction.
Hence $m=s$.
Because $c_m\neq 0$, Eq.~(\ref{62-thq}) with $j=m$ gives
$c_m A_i(\partial) = c_i A_m(\partial)$, i.e.
, $A_i(\partial) = c_i A_m(\partial) / c_m$ for all i.
Define $D(\partial) := A_m(\partial)/c_m\in\mathbb{C}[\partial]$. Then $A_i(\partial) = c_i D(\partial)$ for every $i$, and we obtain $ A(\lambda,\partial) = \sum_{i=0}^{m} c_i D(\partial) \lambda^i
= D(\partial) \sum_{i=0}^{m} c_i \lambda^i = f_2(\lambda) D(\partial).$
The definition of $D(\partial)$ does not depend on the particular choice of the nonzero coefficient:
if $c_k\neq0$, then $c_m A_k(\partial) = c_k A_m(\partial)$, so $A_k(\partial)/c_k = A_m(\partial)/c_m = D(\partial)$.
Thus, the lemma is proved.
\end{proof}}

\begin{pro}\label{pp-3.6}
   In Case (A) in Lemma \ref{lem-l2}, if $f_2(\lambda)\neq 0$, then we have
        $A(\lambda,\partial)=f_{2}(\lambda)D(\partial)$,
        $g_{2}(\lambda)=-f_{2}(\lambda)D(-\lambda)$,
        and $B(\lambda,\partial)=-f_{2}(\lambda)D(-\lambda)D(\partial)$ for some $D(\partial)\in \mathbb{C}[\partial]$.
\end{pro}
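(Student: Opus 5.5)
In Case (A) we have $f_1(\lambda)=g_1(\lambda)=0$, and we assume $f_2(\lambda)\neq 0$. First I substitute these into Eqs.~(\ref{33-thq})--(\ref{44-thq}). Eqs.~(\ref{33-thq}), (\ref{34-thq}), (\ref{35-thq}) and (\ref{39-thq}) become trivial. Eq.~(\ref{41-thq}) reduces to
\[
f_2(\mu)A(\lambda,\partial)=f_2(\lambda)A(\mu,\partial),
\]
which is exactly the hypothesis of Lemma \ref{lem-l3}. So the first step is to apply Lemma \ref{lem-l3} and obtain $A(\lambda,\partial)=f_2(\lambda)D(\partial)$ for some $D(\partial)\in\mathbb{C}[\partial]$.

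Next, Eq.~(\ref{38-thq}) becomes
\[
0=f_2(\lambda+\mu)A(\lambda,-\lambda-\mu)+f_2(\lambda)g_2(\lambda+\mu).
\]
Substituting the form of $A$ gives
\[
f_2(\lambda)\bigl[f_2(\lambda+\mu)D(-\lambda-\mu)+g_2(\lambda+\mu)\bigr]=0.
\]
Since $f_2\neq 0$ and $\mathbb{C}[\lambda,\mu]$ is a domain, the bracket vanishes. Putting $\nu=\lambda+\mu$, this reads $g_2(\nu)=-f_2(\nu)D(-\nu)$, which is the second claim.

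For $B$, I use Eq.~(\ref{43-thq}). With $f_1=g_1=0$ it becomes
\[
-f_2(\lambda)B(\mu,\partial)=A(\lambda,-\lambda-\mu)A(\lambda+\mu,\partial)+f_2(\lambda)B(\lambda+\mu,\partial)-g_2(\mu)A(\lambda,\partial).
\]
Substituting $A$ and $g_2$, every term carries the factor $f_2(\lambda)$, and cancelling it gives
\[
-B(\mu,\partial)-B(\lambda+\mu,\partial)=f_2(\lambda+\mu)D(-\lambda-\mu)D(\partial)+f_2(\mu)D(-\mu)D(\partial).
\]
Setting $\lambda=0$ yields $-2B(\mu,\partial)=2f_2(\mu)D(-\mu)D(\partial)$, so $B(\mu,\partial)=-f_2(\mu)D(-\mu)D(\partial)$. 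Substituting this back shows the identity holds for all $\lambda,\mu$. The main obstacle is this step: getting the signs and arguments in the specialization of Eq.~(\ref{43-thq}) exactly right, so that the $\lambda$-dependence of $B$ is pinned down rather than left with an additive ambiguity. If the cancellation came out differently, I would compare the coefficients of $\lambda$ after dividing by $f_2(\lambda)$.

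Finally, I check that Eqs.~(\ref{40-thq}) and (\ref{44-thq}) hold identically under these formulas, so they impose no further constraint. For Eq.~(\ref{40-thq}), write $h(\lambda)=f_2(\lambda)D(-\lambda)$, so that $g_2=-h$ and $B(\lambda,-\lambda-\mu)=-h(\lambda)D(-\lambda-\mu)$. Then both sides of Eq.~(\ref{40-thq}) equal $h(\lambda)h(\lambda+\mu)$ and $h(\mu)h(\lambda+\mu)$ respectively with cancellations; I expect the two sides to agree, and this is a direct check. Eq.~(\ref{44-thq}) is a similar check, in which both sides are symmetric in $\lambda\leftrightarrow\mu$ after substitution. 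This yields the stated form for some $D(\partial)\in\mathbb{C}[\partial]$.
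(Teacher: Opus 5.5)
Your proof is correct and follows the paper's argument step for step. You apply Lemma~\ref{lem-l3} to the reduced Eq.~(\ref{41-thq}), obtain $g_2$ from Eq.~(\ref{38-thq}), then get $B$ by cancelling $f_2(\lambda)$ in the reduced Eq.~(\ref{43-thq}) and setting $\lambda=0$. The paper phrases that step via $H(\lambda,\partial)=B(\lambda,\partial)+f_2(\lambda)D(-\lambda)D(\partial)$ satisfying $H(\lambda+\mu,\partial)+H(\mu,\partial)=0$, which is the same argument. Both proofs finish with a direct check of Eqs.~(\ref{40-thq}) and (\ref{44-thq}).

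One small correction in that final check. With $h(\lambda)=f_2(\lambda)D(-\lambda)$, both sides of Eq.~(\ref{40-thq}) vanish identically: $f_2(\lambda+\mu)B(\lambda,-\lambda-\mu)=-h(\lambda)h(\lambda+\mu)$ cancels $g_2(\lambda)g_2(\lambda+\mu)=h(\lambda)h(\lambda+\mu)$, and similarly on the right. Both sides of Eq.~(\ref{44-thq}) reduce to $-h(\lambda)h(\mu)D(\partial)$.
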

\begin{proof}
In Case (A) in Lemma \ref{lem-l2} with $f_2(\lambda)\neq 0$, we have $f_{1}(\lambda)=g_{1}(\lambda)=0$. Then Eqs.~(\ref{33-thq})--(\ref{44-thq}) can be simplified into
\begin{eqnarray}
    \label{63-thq}&&f_{2}(\lambda)A(\mu,\partial)=f_{2}(\mu)A(\lambda,\partial),\\
    \label{64-thq}&&f_{2}(\lambda)g_{2}(\lambda+\mu)+f_{2}(\lambda+\mu)A(\lambda,-\lambda-\mu)=0,\\
    \label{66-thq}&&f_{2}(\lambda)B(\mu,\partial)+f_{2}(\lambda)B(\lambda+\mu,\partial)+A(\lambda,-\lambda-\mu)A(\lambda+\mu,\partial)=g_{2}(\mu)A(\lambda,\partial),\\
    \label{68-thq}&&g_{2}(\lambda)g_{2}(\lambda+\mu)+f_{2}(\lambda+\mu)B(\lambda,-\lambda-\mu)=g_{2}(\mu)g_{2}(\lambda+\mu)+f_{2}(\lambda+\mu)B(\mu,-\lambda-\mu),\\
    \label{69-thq}&&g_{2}(\lambda)B(\lambda+\mu,\partial)+A(\lambda+\mu,\partial)B(\lambda,-\lambda-\mu)-g_{2}(\mu)B(\lambda,\partial)\notag\\
    &&\quad=g_{2}(\mu)B(\lambda+\mu,\partial)+A(\lambda+\mu,\partial)B(\mu,-\lambda-\mu)-g_{2}(\lambda)B(\mu,\partial).
\end{eqnarray}

By Lemma \ref{lem-l3}, Eq.~(\ref{63-thq}) implies $A(\lambda,\partial)=f_{2}(\lambda)D(\partial)$ for some
$D(\partial)\in\mathbb{C}[\partial]$. Substituting this into Eq.~(\ref{64-thq}) yields
$f_{2}(\lambda)\bigl(g_{2}(\lambda+\mu)+f_{2}(\lambda+\mu)D(-\lambda-\mu)\bigr)=0$.
Since $f_{2}(\lambda)\neq 0$, we obtain
\begin{eqnarray}
    \label{70-thq}g_{2}(\lambda)=-f_{2}(\lambda)D(-\lambda).
\end{eqnarray}
Substituting $A(\lambda,\partial)$ and $g_{2}(\lambda)$ into Eq.~(\ref{66-thq}), we obtain
\begin{eqnarray}
\label{71-thq}f_{2}(\lambda)\bigl(B(\lambda+\mu,\partial)+f_{2}(\lambda+\mu)D(-\lambda-\mu)D(\partial)
+B(\mu,\partial)+f_{2}(\mu)D(-\mu)D(\partial) \bigr)=0.
\end{eqnarray}
Cancelling the nonzero factor $f_{2}(\lambda)$ gives
\begin{eqnarray}
\label{72-thq}B(\lambda+\mu,\partial)+f_{2}(\lambda+\mu)D(-\lambda-\mu)D(\partial)+B(\mu,\partial)+f_{2}(\mu)D(-\mu)D(\partial)=0.
\end{eqnarray}
Define $H(\lambda,\partial)=B(\lambda,\partial)+f_{2}(\lambda)D(-\lambda)D(\partial)$. Then the above identity becomes
$H(\lambda+\mu,\partial)+H(\mu,\partial)=0$.
Setting $\lambda=0$ yields $2H(\mu,\partial)=0$. Hence $H(\lambda,\partial)=0$ . Consequently,
\begin{eqnarray}
    \label{73-thq}B(\lambda,\partial)=-f_{2}(\lambda)D(-\lambda)D(\partial).
\end{eqnarray}
Finally, a direct verification shows that the obtained expressions for $A(\lambda,\partial)$, $g_{2}(\lambda)$ and $B(\lambda,\partial)$ satisfy Eqs.~(\ref{68-thq})--(\ref{69-thq}). Then the conclusion holds.
\end{proof}

Now, we have completed the discussion of Case (A) in Lemma \ref{lem-l2}.
Next, we turn to Case (B) of Lemma \ref{lem-l2}. In Case (B) of Lemma \ref{lem-l2}, Eqs.~(\ref{33-thq})--(\ref{44-thq}) can be simplified into
\begin{eqnarray}
    \label{75-thq}&&g_{1}(\mu)A(\lambda+\mu,\partial)-g_{1}(\mu)A(\lambda,\mu+\partial)=A(\lambda,-\lambda-\mu)A(\lambda+\mu,\partial)-bA(\lambda,\partial),\\
    \label{76-thq}&&A(\lambda+\mu,\partial)B(\lambda,-\lambda-\mu)-g_{1}(\lambda)B(\mu,\lambda+\partial)-bB(\lambda,\partial)\notag\\
    &&\quad=A(\lambda+\mu,\partial)B(\mu,-\lambda-\mu)-g_{1}(\mu)B(\lambda,\mu+\partial)-bB(\mu,\partial).
\end{eqnarray}
\begin{lem}\label{lem-l4}For Eq.~(\ref{75-thq}), when $g_{1}(\lambda)\neq0$, we obtain that $A(\lambda,\partial)=0$ if $b=0$, and
$A(\lambda,\partial)=0$ or $A(\lambda,\partial)=b$ if $b\neq 0$.
\end{lem}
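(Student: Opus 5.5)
The plan is to use degree arguments in $\lambda$ and $\mu$ on Eq.~(\ref{75-thq}). First I reduce $A$ to a polynomial in $\lambda+\partial$ alone, and then show it must be a constant. Assume $A(\lambda,\partial)\neq 0$, since otherwise there is nothing to prove, and let $q=\deg_\lambda A(\lambda,\partial)$.

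\emph{Step 1 (compare $\lambda$-degrees).} View both sides of Eq.~(\ref{75-thq}) as polynomials in $\lambda$ with coefficients in $\mathbb{C}[\mu,\partial]$.
\begin{itemize}
\item On the left, $g_1(\mu)$ does not involve $\lambda$. Both $A(\lambda+\mu,\partial)$ and $A(\lambda,\mu+\partial)$ have $\lambda$-degree at most $q$, so the left-hand side has $\lambda$-degree at most $q$.
\item On the right, $A(\lambda+\mu,\partial)$ has $\lambda$-degree exactly $q$. Its leading coefficient in $\lambda$ is the nonzero $\lambda^q$-coefficient of $A$, which lies in $\mathbb{C}[\partial]$.
\item The factor $A(\lambda,-\lambda-\mu)$ is nonzero, because $(\lambda,\mu,\partial)\mapsto(\lambda,-\lambda-\mu,\partial)$ is an invertible change of variables.
\item Hence the product $A(\lambda,-\lambda-\mu)A(\lambda+\mu,\partial)$ has $\lambda$-degree $q+\deg_\lambda A(\lambda,-\lambda-\mu)$. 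The remaining term $-bA(\lambda,\partial)$ has $\lambda$-degree at most $q$.
\end{itemize}
Equating degrees forces $\deg_\lambda A(\lambda,-\lambda-\mu)=0$, so $A(\lambda,-\lambda-\mu)=c(\mu)$ for some $c\in\mathbb{C}[\mu]$. Substituting $\mu=-\lambda-\partial$ gives $A(\lambda,\partial)=C(\lambda+\partial)$ with $C(t)=c(-t)\in\mathbb{C}[t]$.

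\emph{Step 2 (plug back in).} With $A(\lambda,\partial)=C(\lambda+\partial)$ we have $A(\lambda+\mu,\partial)=C(\lambda+\mu+\partial)=A(\lambda,\mu+\partial)$. So the left-hand side of Eq.~(\ref{75-thq}) vanishes identically, regardless of $g_1$. Since $A(\lambda,-\lambda-\mu)=C(-\mu)$, the equation becomes
\[
C(-\mu)\,C(\lambda+\mu+\partial)=b\,C(\lambda+\partial).
\]
If $\deg C=k\ge 1$, the left side has $\mu$-degree $2k$ while the right side is independent of $\mu$, a contradiction. Hence $C=c_0\in\mathbb{C}\setminus\{0\}$, and the identity reduces to $c_0^2=bc_0$, i.e.\ $c_0=b$.

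\emph{Conclusion.} If $b=0$, this contradicts $A\neq 0$, so $A(\lambda,\partial)=0$. If $b\neq 0$, then $A(\lambda,\partial)=0$ or $A(\lambda,\partial)=b$.

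The only delicate point is Step 1. There one must check that the leading $\lambda$-coefficient of $A(\lambda+\mu,\partial)$ is genuinely nonzero, and that no cancellation with $-bA(\lambda,\partial)$ can occur in degree above $q$. Both follow because $\mathbb{C}[\lambda,\mu,\partial]$ is an integral domain. A useful consistency check is that setting $\mu=0$ in Eq.~(\ref{75-thq}) gives $(A(\lambda,-\lambda)-b)A(\lambda,\partial)=0$. This forces $A(\lambda,-\lambda)=b$ whenever $A\neq 0$, which agrees with $C(0)=b$.
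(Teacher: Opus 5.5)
Your proof is correct, and it takes a different route from the paper's.

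\textbf{The paper's route.} It expands $A(\lambda,\partial)=\sum_{i=0}^{m}A_i(\lambda)\partial^{i}$ in powers of $\partial$ and supposes $m\ge 1$. Comparing $\partial^{m}$-coefficients gives Eqs.~(\ref{80-thq}) and (\ref{81-thq}), namely $A_m(\lambda+\mu)\bigl(A(\lambda,-\lambda-\mu)-g_1(\mu)\bigr)=A_m(\lambda)\bigl(b-g_1(\mu)\bigr)$. A $\mu$-degree count that uses the leading coefficient of $g_1$ then forces $A_m$ to be constant, which gives a contradiction. This has to be done separately for $b=0$, for $b\neq0$ with $g_1\neq b$, and for $g_1=b$. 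Only after reaching $A=A_0(\lambda)$ does the paper set $\mu=0$.

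\textbf{Your route.} You compare the total $\lambda$-degree of the identity. This shows at once that $A(\lambda,-\lambda-\mu)$ does not involve $\lambda$, so $A(\lambda,\partial)=C(\lambda+\partial)$. For such $A$ the two $g_1$-terms cancel identically, and a single $\mu$-degree count finishes the proof.

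\textbf{What each buys.} Your argument is uniform: there is no case split on $b$ or on whether $g_1=b$. It also never uses $g_1\neq0$. So it covers Eqs.~(\ref{53-thq}) and (\ref{55-thq}) in Proposition~\ref{pro-p1} at the same time, since these are exactly Eq.~(\ref{75-thq}) with $g_1=0$. The paper's route keeps to the technique of comparing top $\partial$-coefficients that it uses throughout Section~3, at the cost of a longer case analysis.
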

\begin{proof}
When $b=0$, Eq.~(\ref{75-thq}) reduces to
\begin{eqnarray}
\label{78-thq}&&g_{1}(\mu)A(\lambda+\mu,\partial)-g_{1}(\mu)A(\lambda,\mu+\partial)
= A(\lambda,-\lambda-\mu)A(\lambda+\mu,\partial).
\end{eqnarray}
If $A(\lambda,\partial)=0$, there is nothing to prove. Assume $A(\lambda,\partial)\neq0$ and write $A(\lambda,\partial)=\sum_{i=0}^{m}A_i(\lambda)\partial^{\,i}$ with $A_m(\lambda)\neq0$.   We claim that $m=0$.
On the contrary, suppose that $m\ge1$.
Comparing the coefficients of $\partial^{m}$ in Eq.~(\ref{78-thq}) gives
\delete{\[
A(\lambda,-\lambda-\mu)A_m(\lambda+\mu)=g_{1}(\mu)\bigl[A_m(\lambda+\mu)-A_m(\lambda)\bigr],
\]
which can be rearranged as}
\begin{equation}\label{80-thq}
A_m(\lambda+\mu)\bigl(A(\lambda,-\lambda-\mu)-g_{1}(\mu)\bigr)=-A_m(\lambda)g_{1}(\mu).
\end{equation}
Let $d=\deg A_m(\lambda) \ge0$ and write $A_m(\lambda)=\sum_{i=0}^{d}a_i\lambda^{i}$ with $a_d\neq0$.  Assume $d\ge1$. Set $p=\deg g_{1}(\lambda)\ge0$ and $g_{1}(\mu)=\sum_{j=0}^{p}l_j\mu^{j}$ with $l_p\neq0$.  View Eq.~(\ref{80-thq}) as a polynomial identity in $\mu$.  The right-hand side has degree $p$ in $\mu$.  On the left-hand side, if $m=p$, then the coefficient of $\mu^{m}$ in $A(\lambda,-\lambda-\mu)-g_{1}(\mu)$ is $(-1)^{m}A_m(\lambda)-l_m$, which is not equal to zero, since $d\ge1$. Therefore, we have $\deg_{\mu}A_m(\lambda+\mu)=d$, and
$\deg_{\mu}\bigl(A(\lambda,-\lambda-\mu)-g_{1}(\mu)\bigr)=\max\{m,p\}$.
  Hence the left-hand side has degree $d+\max\{m,p\}\ge1+p>p$, a contradiction.  Therefore, $d=0$, i.e. $A_m(\lambda)$ is a nonzero constant. Denote it by $A_m\in \mathbb{C}\setminus\{0\}$.
Substituting $A_m(\lambda)=A_m$ into Eq.~(\ref{80-thq}) yields $A_m\,A(\lambda,-\lambda-\mu)=0$. So $A(\lambda,-\lambda-\mu)=0$.  Therefore, we have $A(\lambda, \partial)=0$, a contradiction. Thus, we get $m=0$. Then setting $A(\lambda, \partial)=A_0(\lambda)\in \mathbb{C}[\lambda]$ and taking it into Eq. (\ref{78-thq}), we have
\begin{eqnarray}\label{eeqq1}
g_1(\mu)(A_0(\lambda+\mu)-A_0(\lambda))=A_0(\lambda)A_0(\lambda+\mu).
\end{eqnarray}
Setting $\mu=0$ in Eq.~(\ref{eeqq1}), we have $A_0^2(\lambda)=0$. Therefore, we obtain $A(\lambda, \partial)=A_0(\lambda)=0$.
\delete{By comparing the degree of $\mu$ in Eq. (\ref{eeqq1}), we have $g_1(\mu)\in \mathbb{C}\backslash\{0\}$. Then by comparing the degree of $\mu$ in Eq. (\ref{eeqq1}), we have $A(\lambda)\in \mathbb{C}$. Taking it into Eq. (\ref{eeqq1}), we have $A(\lambda, \partial)=A_0(\lambda)=0$.}
\delete{Expanding gives
\[
\sum_{i=0}^{m}A_i(\lambda)(-\lambda-\mu)^{i}=0.
\]
Since $m\ge1$, the coefficient of $\mu^{m}$ is $(-1)^{m}A_m=0$, contradicting $A_m\neq0$.  Thus $m\ge1$ is impossible, so $m=0$.
Consequently $A(\lambda,\partial)=A_0(\lambda)$.  Substituting this into Eq.~(\ref{78-thq}) and setting $\mu=0$ gives $A_0(\lambda)^{2}=0$, i.e.\ $A(\lambda,\partial)=A_0(\lambda)=0$, which contradicts the assumption $A(\lambda,\partial)\neq0$.  Hence when $b=0$ we must have $A(\lambda,\partial)=0$.}

Next, we discuss the case $b \neq 0$. If $A(\lambda,\partial)=0$, then the statement holds.
Assume $A(\lambda,\partial)\neq0$ and we shall prove $A(\lambda,\partial)=b$.
Write $A(\lambda,\partial)=\sum_{i=0}^{m}A_i(\lambda)\partial^i$ with $A_m(\lambda)\neq0$. We claim that $m=0$.
Suppose $m\ge1$.  Comparing the coefficients of $\partial^{m}$  in Eq.~(\ref{75-thq}) gives
\begin{equation}\label{81-thq}
A_m(\lambda+\mu)\bigl(A(\lambda,-\lambda-\mu)-g_{1}(\mu)\bigr)=A_m(\lambda)\bigl(b-g_{1}(\mu)\bigr).
\end{equation}

When $g_1(\lambda)\neq b$, let $d=\deg A_m(\lambda)\ge0$.  If $d\ge1$, the same argument on discussing the degree of $\mu$ as the case when $b=0$ yields a contradiction.  Hence $d=0$. Set $A_m(\lambda)=A_m\in \mathbb{C}\setminus \{0\}$.  Substituting into Eq.~(\ref{81-thq}) yields $A_m\bigl(A(\lambda,-\lambda-\mu)-b\bigr)=0$. Then we have $A(\lambda,-\lambda-\mu)=b$. Therefore, we obtain $A(\lambda,\partial)=b$, which contradicts with $m\geq 1$. Thus, we have $m=0$. Therefore, $A(\lambda,\partial)=A_0(\lambda)$. Taking it into Eq. (\ref{75-thq}), we have
\begin{eqnarray}\label{eeqq2}
g_1(\mu)(A_0(\lambda+\mu)-A_0(\lambda))=A_0(\lambda)(A_0(\lambda+\mu)-b).
\end{eqnarray}
Setting $\mu=0$ in Eq. (\ref{eeqq2}), we have $A_0(\lambda)(A_0(\lambda)-b)=0$. Since $A_0(\lambda)\neq 0$, we have $A(\lambda,\partial)=A_0(\lambda)=b$.

When $g_1(\lambda)=b$, Eq.~(\ref{81-thq}) becomes $A_m(\lambda+\mu)\bigl(A(\lambda,-\lambda-\mu)-b\bigr)=0$.  Since $A_m(\lambda)\neq0$ , we obtain $A(\lambda,-\lambda-\mu)=b$, i.e. $A(\lambda,\partial)=b$, a contradiction. Therefore, we have $m=0$. Then we get $A(\lambda,\partial)=A_0(\lambda)$. Taking it into Eq. (\ref{75-thq}), we have
\begin{eqnarray}
g_1(\mu)A_0(\lambda+\mu)=A_0(\lambda)A_0(\lambda+\mu).
\end{eqnarray}
Since $A_0(\lambda+\mu)\neq 0$, we have $A(\lambda,\partial)=A_0(\lambda)=g_1(\mu)=b$.

The proof is completed.

\delete{Now from Eq.~(\ref{82-thq}) we have $\sum_{i=0}^{m}A_i(\lambda)(-\lambda-\mu)^{i}=b$.  If $m\ge1$, the coefficient of $\mu^{m}$ is $(-1)^{m}A_m(\lambda)=0$, contradicting $A_m(\lambda)\neq0$.  Hence $m\ge1$ is impossible, so $m=0$ and $A(\lambda,\partial)=A_0(\lambda)\neq 0$.
Now substitute $A(\lambda,\partial)=A_0(\lambda)$ into Eq.~(\ref{75-thq}):
\begin{equation}\label{83-thq}
A_0(\lambda+\mu)\bigl[g_{1}(\mu)-A_0(\lambda)\bigr]=A_0(\lambda)\bigl[g_{1}(\mu)-b\bigr].
\end{equation}

If $g_1(\lambda)\neq b$.
Choose $\mu_{0}$ such that $g_{1}(\mu_{0})\neq b$.  Setting $\mu=\mu_{0}$ in Eq.~(\ref{83-thq}) gives
\begin{eqnarray}
A_0(\lambda+\mu_{0})\bigl[g_{1}(\mu_{0})-A_0(\lambda)\bigr]=A_0(\lambda)\bigl[g_{1}(\mu_{0})-b\bigr].
\end{eqnarray}
Let $r=\deg A_0(\lambda)\ge0$.  If $r\ge1$, the left-hand side has degree $2r$ while the right-hand side has degree $r$, contradiction.  Hence $r=0$, so $A_0(\lambda)$ is a nonzero constant, say $A_0$.  The equation simplifies to $A_0\bigl[g_{1}(\mu_{0})-A_0\bigr]=A_0\bigl[g_{1}(\mu_{0})-b\bigr]$.  Cancelling $A_0 \neq0$ gives $A_0=b$.  Thus $A(\lambda,\partial)=A_0(\lambda)=A_0= b$.

If $g_1(\lambda)=b$, then Eq.~(\ref{83-thq}) becomes $A_0(\lambda+\mu)\bigl[b-A_0(\lambda)\bigr]=0$.  Since $A_0(\lambda+\mu)\neq0$, we  have $A_0(\lambda)= b$.
Hence  $A(\lambda,\partial)\neq0$ implies $A(\lambda,\partial)= b$.  This completes the proof for $b\neq0$.}
\end{proof}
Next, we discuss $g_{1}(\lambda)$ and $B(\lambda,\partial)$ based on Lemma \ref{lem-l4} and Eq.~(\ref{76-thq}).
\begin{pro}\label{lem-l6}
  In Case (B) of Lemma \ref{lem-l2}, assume $A(\lambda,\partial)=b=0$ and $g_{1}(\lambda)\neq0$.
Then there exists a  polynomial $F(\partial)\in\mathbb{C}[\partial]$ such that
$B(\lambda,\partial)=g_{1}(\lambda)\,F(\lambda+\partial)$.
\end{pro}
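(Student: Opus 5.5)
The plan is to specialize Eq.~(\ref{76-thq}) to the present case and reduce it to the functional equation already handled in Lemma \ref{lem-l3}.

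\emph{Step 1: simplify Eq.~(\ref{76-thq}).} With $A(\lambda,\partial)=0$ and $b=0$, the terms $A(\lambda+\mu,\partial)B(\cdot,-\lambda-\mu)$ and $bB(\cdot,\partial)$ on both sides vanish. Eq.~(\ref{76-thq}) collapses to
\begin{eqnarray*}
g_{1}(\lambda)B(\mu,\lambda+\partial)=g_{1}(\mu)B(\lambda,\mu+\partial).
\end{eqnarray*}
Eq.~(\ref{75-thq}) is trivially satisfied. In Case (B) all other identities among Eqs.~(\ref{33-thq})--(\ref{44-thq}) already hold.

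\emph{Step 2: change variables.} Define $C(\lambda,t)=B(\lambda,t-\lambda)\in\mathbb{C}[\lambda,t]$, so that $B(\lambda,\partial)=C(\lambda,\lambda+\partial)$. Then
\[
B(\mu,\lambda+\partial)=C(\mu,\lambda+\mu+\partial),\qquad B(\lambda,\mu+\partial)=C(\lambda,\lambda+\mu+\partial).
\]
Put $t=\lambda+\mu+\partial$. The substitution $(\lambda,\mu,\partial)\mapsto(\lambda,\mu,t)$ is an automorphism of the polynomial ring, so $t$ may be treated as an independent variable. The identity from Step 1 becomes
\[
g_1(\lambda)C(\mu,t)=g_1(\mu)C(\lambda,t).
\]

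\emph{Step 3: apply Lemma \ref{lem-l3}.} This is exactly Eq.~(\ref{eq-q2}) with $f_2$ replaced by the nonzero polynomial $g_1$ and $\partial$ renamed $t$. Lemma \ref{lem-l3} therefore gives $F(t)\in\mathbb{C}[t]$ with $C(\lambda,t)=g_1(\lambda)F(t)$. Hence
\[
B(\lambda,\partial)=g_1(\lambda)F(\lambda+\partial).
\]

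\emph{Step 4: converse.} Substituting $B(\lambda,\partial)=g_1(\lambda)F(\lambda+\partial)$ back, both sides of the identity in Step 1 equal $g_1(\lambda)g_1(\mu)F(\lambda+\mu+\partial)$. So every such $F$ gives a solution.

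The only point requiring care is spotting the shift $t=\lambda+\partial$ that turns the equation into the form of Lemma \ref{lem-l3}, and justifying that $t$ is an independent variable. Beyond that, the argument is routine.
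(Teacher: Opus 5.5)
Your proof is correct. Eq.~\eqref{76-thq} does reduce to Eq.~\eqref{84-thq}. With $C(\lambda,t)=B(\lambda,t-\lambda)$ you do get $B(\mu,\lambda+\partial)=C(\mu,\lambda+\mu+\partial)$ and $B(\lambda,\mu+\partial)=C(\lambda,\lambda+\mu+\partial)$. Since $(\lambda,\mu,\partial)\mapsto(\lambda,\mu,\lambda+\mu+\partial)$ is a ring automorphism, Lemma~\ref{lem-l3} applies with $g_1$ in place of $f_2$.

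The paper packages the same argument differently. It neither changes variables nor cites Lemma~\ref{lem-l3}, but argues directly on Eq.~\eqref{84-thq}:
\begin{itemize}
\item $g_1(\lambda)$ divides $B(\lambda,\mu+\partial)$, since $g_1(\lambda)$ and $g_1(\mu)$ are coprime;
\item so the identity can be rewritten as $B(\mu,\lambda+\partial)=g_1(\mu)\,\frac{B(\lambda,\mu+\partial)}{g_1(\lambda)}$;
\item setting $\lambda=0$ in the quotient polynomial gives $F$.
\end{itemize}
The proof of Lemma~\ref{lem-l3} is itself this divisibility-and-separation argument, so the mathematical content of the two routes is the same.

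Your version avoids repeating that argument. It also shows why $B$ depends on $\partial$ only through $\lambda+\partial$: in the coordinates $(\lambda,\lambda+\partial)$ the equation is literally Eq.~\eqref{eq-q2}. The paper's version is self-contained and needs no substitution.
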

\begin{proof}
When $A(\lambda,\partial)=b=0$, Eq.~(\ref{76-thq}) reduces to
\begin{equation}\label{84-thq}
g_{1}(\lambda)B(\mu,\lambda+\partial)=g_{1}(\mu)B(\lambda,\mu+\partial).
\end{equation}
Since $g_{1}(\lambda)\neq0$, by Eq. (\ref{84-thq}), $g_{1}(\lambda)$ divides $B(\lambda,\mu+\partial)$. Therefore, we have
\begin{eqnarray}\label{eq-q5}
B(\mu,\lambda+\partial)=g_{1}(\mu)\frac{B(\lambda,\mu+\partial)}{g_{1}(\lambda)}.
\end{eqnarray}
Set $C(\lambda,\mu+\partial)=\frac{B(\lambda,\mu+\partial)}{g_{1}(\lambda)}$. Setting $\lambda=0$ in Eq. (\ref{eq-q5}), we obtain
$B(\mu,\partial)=g_1(\mu)C(0,\mu+\partial)$. Let $F(\mu+\partial)=C(0,\mu+\partial)$. Then we have $B(\lambda,\partial)=g_{1}(\lambda)\,F(\lambda+\partial)$. Note that such $B(\lambda,\partial)$ for any $F(\partial)\in\mathbb{C}[\partial]$ satisfies Eq. (\ref{84-thq}).
Then the conclusion holds.
\delete{If $B(\lambda,\partial)=0$, we simply take $F(x)=0$.  Now assume $B(\lambda,\partial)\neq0$ and write
\[
B(\lambda,\partial)=\sum_{i=0}^{p}B_i(\lambda)\partial^{\,i},\qquad
B_p(\lambda)\neq0\;(p\ge0).
\]
\noindent\textbf{Step 1.  The leading coefficient.}

When $p\ge1$, comparing the coefficients of $\partial^{p}$ in Eq.~(\ref{84-thq}) gives
\begin{equation}\label{85-thq}
g_{1}(\mu)B_{p}(\lambda)=g_{1}(\lambda)B_{p}(\mu).
\end{equation}
By Lemma 4.2 of \cite{Luo2026}, applied to the Eq.~(\ref{85-thq})
there exists a nonzero scalar \(\alpha\in \mathbb{C}\setminus\{0\}\) such that
\begin{eqnarray}
B_{p}(\lambda)=\alpha\,g_{1}(\lambda).
\end{eqnarray}

\delete{Let us write $g_{1}(\lambda)=\sum_{i=0}^{s}l_i\lambda^{i}$ with $l_s\neq0$,
and $B_{p}(\lambda)=\sum_{j=0}^{q}b_j\lambda^{j}$ with $b_q\neq0$.
Substituting these expansions into Eq.~(\ref{85-thq}) and comparing the coefficients of
$\lambda^{i}\mu^{j}$ yields
\begin{equation}\label{86-thq}
l_{i}b_{j}=l_{j}b_{i}\qquad(\forall i,j).
\end{equation}
By the same argument as in Proposition 3.4, we have $s=q$.
Set $\alpha:=b_{s}/l_{s}\neq0$.  Then $b_{i}=\alpha l_{i}$ for all $i$.
Therefore
\[
B_{p}(\lambda)=\sum_{j=0}^{s}b_j\lambda^{j}
=\sum_{j=0}^{s}\alpha l_j\lambda^{j}
=\alpha\sum_{j=0}^{s}l_j\lambda^{j}=\alpha\,g_{1}(\lambda).
\]
Moreover, $\alpha$ does not depend on the particular nonzero coefficient of $g_1(\lambda)$:
if $l_k\neq0$ for some $k$, then $l_k b_s = l_s b_k$ by Eq.~(\ref{86-thq}), whence
$b_k/l_k = b_s/l_s = \alpha$.  Thus
\begin{eqnarray*}
    B_{p}(\lambda)=\alpha\,g_{1}(\lambda),\qquad \alpha\in \mathbb{C}\setminus\{0\}.
\end{eqnarray*}}
\noindent\textbf{Step 2.  Induction on $p=\deg_{\partial}B(\lambda,\partial)$.}

We prove that any nonzero solution $B(\lambda,\partial)$ of Eq.~(\ref{84-thq}) can be written as
$B(\lambda,\partial)=g_{1}(\lambda)F(\lambda+\partial)$ for some $F(x)\in\mathbb{C}[x]$.

Base case $p=0$.
Then $B(\lambda,\partial)=B_0(\lambda)$ and Eq.~(\ref{84-thq}) becomes
$g_{1}(\lambda)B_0(\mu)=g_{1}(\mu)B_0(\lambda)$.  Exactly the same coefficient
comparison as in Step~1 shows that $B_0(\lambda)=\beta g_{1}(\lambda)$ for a
constant $\beta\in \mathbb{C}\setminus\{0\}$.  Hence $B(\lambda,\partial)=g_{1}(\lambda)\cdot\beta$,
which is of the form $g_{1}(\lambda)F(\lambda+\partial)$ with $F(x)=\beta$.

Inductive step.
Assume the statement holds for all nonzero solutions with $\deg_{\partial}B(\lambda,\partial)<p$
($p\ge1$).  Let $B(\lambda,\partial)$ be a nonzero solution with $\deg_{\partial}B(\lambda,\partial)=p$.
From Step~1 we have $B_{p}(\lambda)=\alpha\,g_{1}(\lambda)$, $\alpha\neq0$.
Define an auxiliary polynomial
\[
B^{(p)}(\lambda,\partial):=\alpha\,g_{1}(\lambda)\,(\lambda+\partial)^{p}.
\]
One checks directly that $B^{(p)}(\lambda,\partial)$ satisfies Eq.~(\ref{84-thq}).
Moreover $\deg_{\partial}B^{(p)}(\lambda,\partial)=p$ and the coefficient of $\partial^{p}$ in
$B^{(p)}(\lambda,\partial)$ is $\alpha\,g_{1}(\lambda)=B_{p}(\lambda)$.
Set
\[
\widetilde{B}(\lambda,\partial):=B(\lambda,\partial)-B^{(p)}(\lambda,\partial).
\]
Then $\widetilde{B}(\lambda,\partial)$ satisfies Eq.~(\ref{84-thq}) (by linearity) and
$\deg_{\partial}\widetilde{B}(\lambda,\partial)\le p-1$.
If $\widetilde{B}=0$, then $B(\lambda,\partial)=B^{(p)}(\lambda,\partial)$ already has the desired form with
$F(x)=\alpha x^{p}$.
If $\widetilde{B}(\lambda,\partial)\neq0$, the induction hypothesis applies, so there exists
a polynomial $F_{1}(x)\in\mathbb{C}[x]$ such that
$\widetilde{B}(\lambda,\partial)=g_{1}(\lambda)F_{1}(\lambda+\partial)$.
Consequently,
\[
\begin{aligned}
B(\lambda,\partial) &= \alpha\,g_{1}(\lambda)\,(\lambda+\partial)^{p}
+g_{1}(\lambda)F_{1}(\lambda+\partial)\\
&= g_{1}(\lambda)\bigl[\alpha(\lambda+\partial)^{p}+F_{1}(\lambda+\partial)\bigr].
\end{aligned}
\]
Defining $F(x):=\alpha x^{p}+F_{1}(x)\in\mathbb{C}[x]$ completes the induction.
Hence, every polynomial $B(\lambda,\partial)$ satisfying Eq.~(\ref{84-thq}) can be written as
$B(\lambda,\partial)=g_{1}(\lambda)F(\lambda+\partial)$ for some $F(x)\in\mathbb{C}[x]$
(where $F(x)=0$ precisely when $B(\lambda,\partial)=0$).  This proves the lemma.}
\end{proof}
\begin{pro}\label{pro-p3}
   In Case (B) of Lemma \ref{lem-l2}, assume $A(\lambda,\partial)=0$, $b\neq0$, and $g_{1}(\lambda)\neq0$.
Then we have the following cases:
\begin{enumerate}
    \item[(B1)] If $g_{1}(\lambda)\neq b$, then
    $B(\lambda,\partial)=g_{1}(\lambda)F(\lambda+\partial)-bF(\partial)$,
    where $F(\partial)$ is an arbitrary polynomial in $\mathbb{C}[\partial]$.
    \item[(B2)] If $g_{1}(\lambda)= b$, then
    $B(\lambda,\partial)=U(\lambda)+W(\lambda+\partial)-W(\partial)$,
    where $U(\lambda)$ is an arbitrary polynomial in  $\mathbb{C}[\lambda]$ and  $W(\partial)$ is an arbitrary polynomial in $\mathbb{C}[\partial]$.
\end{enumerate}
\end{pro}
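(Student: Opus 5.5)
With $A(\lambda,\partial)=0$, Eq.~(\ref{76-thq}) reduces to the linear functional equation
\begin{equation*}
g_{1}(\lambda)B(\mu,\lambda+\partial)+bB(\lambda,\partial)=g_{1}(\mu)B(\lambda,\mu+\partial)+bB(\mu,\partial),
\end{equation*}
and all other equations hold automatically. The solution set is therefore a vector space. The plan is to find a particular family of solutions, then show every solution is of that form by specializing $\mu=0$ and reducing to a simpler equation.

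\textbf{Checking the claimed solutions.} First verify that the claimed forms are solutions.

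For $B=g_1(\lambda)F(\lambda+\partial)-bF(\partial)$, both sides equal
\[
g_1(\lambda)g_1(\mu)F(\lambda+\mu+\partial)-b\,g_1(\lambda)F(\lambda+\partial)-b\,g_1(\mu)F(\mu+\partial)+b\,g_1(\lambda)F(\lambda+\partial)\ldots
\]
and a direct expansion confirms the equality. The case $g_1=b$ works the same way.

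For $B=U(\lambda)$, both sides become $bU(\mu)+bU(\lambda)$, so it is a solution when $g_1=b$. The term $W(\lambda+\partial)-W(\partial)$ is the instance $F=W$ of the first family with $g_1=b$, so it is also a solution.

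\textbf{Showing every solution has this form.} Set $\mu=0$ in the equation, writing $g_1(0)=c$. This gives
\[
g_1(\lambda)B(0,\lambda+\partial)+bB(\lambda,\partial)=cB(\lambda,\partial)+bB(0,\partial),
\]
that is,
\[
(b-c)B(\lambda,\partial)=bB(0,\partial)-g_1(\lambda)B(0,\lambda+\partial).
\]

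\emph{Case $c\neq b$.} Then $B$ is determined by $V(\partial):=B(0,\partial)$:
\[
B(\lambda,\partial)=\frac{bV(\partial)-g_1(\lambda)V(\lambda+\partial)}{b-c}.
\]
Taking $F=-V/(b-c)$ gives exactly the form in (B1). This case cannot occur under (B2), since there $g_1(0)=b$.

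\emph{Case $c=b$.} This is the main obstacle. The specialization degenerates to $g_1(\lambda)V(\lambda+\partial)=bV(\partial)$.

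\textbf{The degenerate case.}
\begin{itemize}
\item If $g_1\neq b$ is nonconstant, compare degrees in $\lambda$: this forces $V=0$, so $B(0,\partial)=0$. In this situation I would subtract no particular solution and instead differentiate the original equation in $\mu$ at $\mu=0$. Alternatively, set $\lambda=\mu'$ and use that $B-g_1(\lambda)F(\lambda+\partial)+bF(\partial)$ can be normalized by a suitable $F$ so that $B(\lambda,-\lambda)$, or the coefficient of some power, vanishes. Then run a leading-degree argument in $\partial$, as in Lemma~\ref{lem-l4}, to get $B=0$ modulo the family.
\item If $g_1=b$ (case (B2)), the equation becomes $D(\lambda,\mu)=D(\mu,\lambda)$ for
\[
D(\lambda,\mu)=B(\mu,\lambda+\partial)+B(\lambda,\partial)-B(\lambda,\mu+\partial)-B(\mu,\partial)
\]
(suitably arranged). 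Here I would subtract $W(\lambda+\partial)-W(\partial)$ with $W$ chosen to kill the $\partial$-dependence, namely $W$ determined from $\partial_\partial B(\lambda,\partial)$ at low order. Then show the remainder depends only on $\lambda$ by comparing coefficients of $\partial^i$ inductively from the top degree.
\end{itemize}

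The top-degree coefficient comparison in $\partial$, yielding that the leading coefficient in $\partial$ is forced to be $g_1(\lambda)\cdot$const (or constant when $g_1=b$), is the crucial step, and it mirrors the divisibility arguments of Lemma~\ref{lem-l3}.
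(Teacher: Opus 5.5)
You have reduced Eq.~\eqref{76-thq} correctly to $g_1(\lambda)B(\mu,\lambda+\partial)+bB(\lambda,\partial)=g_1(\mu)B(\lambda,\mu+\partial)+bB(\mu,\partial)$. Your verification that the claimed families are solutions is right. The specialization $\mu=0$ also correctly proves (B1) whenever $g_1(0)\neq b$, and it is a neat shortcut compared with the paper's top-degree induction (the paper cites \cite[Lemma 4.7]{Luo2026}).

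\textbf{The gap.} The case $g_1(0)=b$ is not proved. It is not marginal: it contains all of (B2) and every instance of (B1) with $g_1(0)=b$, e.g.\ $g_1(\lambda)=b+\lambda$. For this case you only list candidate strategies and carry none of them out.

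The failure of $\mu=0$ there is structural. Every member of the (B1) family satisfies $B(0,\partial)=(g_1(0)-b)F(\partial)=0$, so $B(0,\cdot)$ carries no information about $F$. Your conclusion $V=0$ is therefore automatic and does not advance the argument.

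The step you single out as crucial is also misstated. In (B1), comparing $\partial^p$-coefficients gives $(g_1(\lambda)-b)B_p(\mu)=(g_1(\mu)-b)B_p(\lambda)$, so the leading coefficient is $\alpha\,(g_1(\lambda)-b)$. It is not $g_1(\lambda)$ times a constant; that is the $b=0$ situation of Proposition~\ref{lem-l6}. In (B2), the $\partial^p$-comparison is vacuous. You must go down to the $\partial^{p-1}$-coefficient, which gives $\mu B_p(\lambda)=\lambda B_p(\mu)$, so $B_p=k\lambda$ rather than a constant. A nonzero constant leading coefficient with $p\ge 1$ is in fact impossible there.

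\textbf{How to close it.} For (B1), do not specialize at $0$. Pick $\mu_0$ with $\kappa:=g_1(\mu_0)\neq b$, which exists since $g_1\neq b$, and set $V_0(\partial)=B(\mu_0,\partial)$. The equation becomes
$SB=g_1(\lambda)V_0(\lambda+\partial)-bV_0(\partial)$, where $S$ is the operator
$P(\lambda,\partial)\mapsto \kappa P(\lambda,\partial+\mu_0)-bP(\lambda,\partial)$.
\begin{enumerate}
\item $S$ is $\mathbb{C}[\lambda]$-linear and triangular in $\partial$-degree with diagonal entries $\kappa-b\neq0$, so it is bijective on $\mathbb{C}[\partial]$ and injective on $\mathbb{C}[\lambda,\partial]$.
\item $S$ commutes with $\partial\mapsto\lambda+\partial$ and with multiplication by $g_1(\lambda)$.
\item Hence $F:=S^{-1}V_0\in\mathbb{C}[\partial]$ gives $S\bigl(B-g_1(\lambda)F(\lambda+\partial)+bF(\partial)\bigr)=0$, and injectivity yields (B1).
\end{enumerate}
Your $\mu=0$ argument is exactly the case $\mu_0=0$, where $S$ is the scalar $g_1(0)-b$.

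For (B2), write $B=B(s,t)$ and differentiate $B(\mu,\lambda+\partial)+B(\lambda,\partial)=B(\lambda,\mu+\partial)+B(\mu,\partial)$ in $\mu$ at $\mu=0$. With $w(t):=\frac{\partial B}{\partial s}(0,t)$, this gives $\frac{\partial B}{\partial t}(\lambda,\partial)=w(\lambda+\partial)-w(\partial)$. Taking $W$ with $W'=w$, the polynomial $B(\lambda,\partial)-W(\lambda+\partial)+W(\partial)$ is independent of $\partial$; call it $U(\lambda)$. This gives (B2) without any induction.
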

\begin{proof}
This conclusion is a special case of \cite[Lemma 4.7]{Luo2026} with $a(\lambda)=g_1(\lambda),b(\lambda)=b$.
\delete{When \(g_1(\lambda)\neq b\), it corresponds to the case \(a(\lambda)\neq b(\lambda)\) in Lemma 4.7, yielding (B1). When \(g_1(\lambda)=b\), it corresponds to the case \(a(\lambda)=b(\lambda)\neq0\) in Lemma 4.7, yielding (B2).}
\end{proof}
\delete{\begin{proof}
    Since $A(\lambda,\partial)=0$ and $b\neq0$, Eq.~(\ref{76-thq}) simplifies to
\begin{equation}\label{87-thq}
g_{1}(\mu)B(\lambda,\mu+\partial)-g_{1}(\lambda)B(\mu,\lambda+\partial)=b\bigl[B(\lambda,\partial)-B(\mu,\partial)\bigr].
\end{equation}
If $B(\lambda,\partial)=0$, both statements hold trivially (take $F=0$ in (B1) or $U=W=0$ in (B2)).
We now assume $B(\lambda,\partial)\neq0$ and write
\[
B(\lambda,\partial)=\sum_{i=0}^{p}B_i(\lambda)\partial^{\,i},\qquad B_p(\lambda)\neq0\;(p\ge0).
\]
When $p\ge1$,comparing the coefficients of $\partial^{p}$ in Eq.~(\ref{87-thq}) gives
\begin{equation}\label{88-thq}
B_{p}(\lambda)\bigl[g_{1}(\mu)-b\bigr]=B_{p}(\mu)\bigl[g_{1}(\lambda)-b\bigr].
\end{equation}
We distinguish two subcases.

\medskip
\noindent\textbf{Case (i): $g_{1}(\lambda)\neq b$.}
When $p\ge1$, by the same argument as in Lemma 3.7 Step 1, there exists a nonzero constant $\alpha$ such that
\begin{equation}\label{89-thq}
B_{p}(\lambda)=\alpha\bigl[g_{1}(\lambda)-b\bigr].
\end{equation}
We now prove that every nonzero solution $B(\lambda,\partial)$ of Eq.~(\ref{87-thq}) admits the form
$B(\lambda,\partial)=g_{1}(\lambda)F(\lambda+\partial)-bF(\partial)$ for some polynomial $F$
when $g_1(\lambda)\neq b$.  The proof proceeds by induction on $p=\deg_{\partial}B(\lambda,\partial)$.

Base case $p=0$. Then $B(\lambda,\partial)=B_0(\lambda)$ and Eq.~(\ref{87-thq}) becomes
\begin{equation}\label{90-thq}
B_0(\lambda)\bigl[g_{1}(\mu)-b\bigr]=B_0(\mu)\bigl[g_{1}(\lambda)-b\bigr].
\end{equation}
Using the same coefficient comparison as above we obtain $B_0(\lambda)=\beta[g_{1}(\lambda)-b]$ for a nonzero constant $\beta$.  Hence
$B(\lambda,\partial)=g_{1}(\lambda)\beta-b\beta$, which is exactly the desired form with $F(x)=\beta$.

Inductive step.
Assume the claim holds for all nonzero solutions with $\deg_{\partial}B(\lambda,\partial)<p$ ($p\ge1$).  Let $B(\lambda,\partial)$ be a solution with $\deg_{\partial}B(\lambda,\partial)=p$ and $B_p(\lambda)\neq0$.  From Eq.~(\ref{89-thq}) we have
$B_p(\lambda)=\alpha[g_{1}(\lambda)-b]$, $\alpha\neq0$.  Define
\[
B^{(p)}(\lambda,\partial):=\alpha g_{1}(\lambda)(\lambda+\partial)^{p}-\alpha b\,\partial^{\,p}.
\]
One readily verifies that $B^{(p)}(\lambda,\partial)$ satisfies Eq.~(\ref{87-thq}).  Moreover $\deg_{\partial}B^{(p)}(\lambda,\partial)=p$ and the coefficient of $\partial^{p}$ in $B^{(p)}(\lambda,\partial)$ is $\alpha[g_{1}(\lambda)-b]=B_{p}(\lambda)$.  Set
\[
\widetilde{B}(\lambda,\partial):=B(\lambda,\partial)-B^{(p)}(\lambda,\partial).
\]
Then $\widetilde{B}(\lambda,\partial)$ satisfies Eq.~(\ref{87-thq}) (by linearity) and $\deg_{\partial}\widetilde{B}(\lambda,\partial)\le p-1$.
If $\widetilde{B}(\lambda,\partial)=0$, then $B(\lambda,\partial)=B^{(p)}(\lambda,\partial)$ already has the required form with $F(x)=\alpha x^{p}$.
If $\widetilde{B}(\lambda,\partial)\neq0$, the induction hypothesis yields a polynomial $F_{1}(x)$ such that
$\widetilde{B}(\lambda,\partial)=g_{1}(\lambda)F_{1}(\lambda+\partial)-bF_{1}(\partial)$.  Consequently,
\[
\begin{aligned}
B(\lambda,\partial) &= \alpha g_{1}(\lambda)(\lambda+\partial)^{p}-\alpha b\,\partial^{p}
                     +g_{1}(\lambda)F_{1}(\lambda+\partial)-bF_{1}(\partial)\\
&= g_{1}(\lambda)\bigl[\alpha(\lambda+\partial)^{p}+F_{1}(\lambda+\partial)\bigr]
   -b\bigl[\alpha\partial^{p}+F_{1}(\partial)\bigr].
\end{aligned}
\]
Defining $F(x):=\alpha x^{p}+F_{1}(x)$ gives the desired representation.  This completes the induction, establishing (B1).

\medskip
\noindent\textbf{Case (ii): $g_{1}(\lambda)= b$.}
Then Eq.~(\ref{87-thq}) reduces to
\begin{equation}\label{91-thq}
B(\lambda,\mu+\partial)-B(\mu,\lambda+\partial)=B(\lambda,\partial)-B(\mu,\partial).
\end{equation}
When $p\ge 1$, comparing the coefficients of $\partial^{p}$ gives the identity $B_{p}(\lambda)-B_{p}(\mu)=B_{p}(\lambda)-B_{p}(\mu),$
which imposes no restriction.
When $p=1$, we have $B(\lambda,\partial)=B_0(\lambda)+B_1(\lambda)\partial$.
Substituting this into Eq.~(\ref{91-thq}) yields $B_1(\lambda)\mu = B_1(\mu)\lambda$.
When $p\ge2$,then comparing the coefficients of $\partial^{p-1}$ yields
\begin{eqnarray}\label{92-thq}
    \lambda B_{p}(\mu)=\mu B_{p}(\lambda)\qquad(\forall\,\lambda,\mu).
\end{eqnarray}
Consequently, for every $p\ge1$, we have $\lambda B_{p}(\mu)=\mu B_{p}(\lambda)$.
By the same method as in Lemma~3.7 Step~1, we obtain a nonzero constant $k$ such that $B_{p}(\lambda)=k\lambda$.
Now we prove by induction on $p$ that any solution of Eq.~(\ref{91-thq}) when $g_1(\lambda)=b$ can be written as
$B(\lambda,\partial)=U(\lambda)+W(\lambda+\partial)-W(\partial)$ for some polynomials $U,W$.

Base case $p=0$.
Then $B(\lambda,\partial)=B_0(\lambda)$.
Choosing $U(\lambda)=B_0(\lambda)$ and $W=0$ gives the required form.

Inductive step.
Suppose the statement holds for all solutions with $\deg_{\partial}B(\lambda,\partial)<p$ ($p\ge1$), and let $B(\lambda,\partial)$ be a solution with $\deg_{\partial}B(\lambda,\partial)=p$, $B_p(\lambda)=k\lambda$ ($k\neq0$).  Define
\[
B^{(p)}(\lambda,\partial):=\frac{k}{p+1}\bigl[(\lambda+\partial)^{p+1}-\partial^{\,p+1}\bigr].
\]
A direct computation shows that $B^{(p)}(\lambda,\partial)$ satisfies Eq.~(\ref{91-thq}).
Its $\partial^{p}$-coefficient is $\frac{k}{p+1}\cdot\binom{p+1}{p}\lambda=k\lambda=B_{p}(\lambda)$, and $\deg_{\partial}B^{(p)}(\lambda,\partial)=p$.
Set $\widetilde{B}(\lambda,\partial):=B(\lambda,\partial)-B^{(p)}(\lambda,\partial)$.  Then $\widetilde{B}(\lambda,\partial)$ satisfies Eq.~(\ref{91-thq}) and $\deg_{\partial}\widetilde{B}(\lambda,\partial)\le p-1$.  If $\widetilde{B}(\lambda,\partial)=0$, then $B(\lambda,\partial)=B^{(p)}(\lambda,\partial)$ already has the required form with $U=0, W(x)=\frac{k}{p+1}x^{p+1}$.
If $\widetilde{B}(\lambda,\partial)\neq0$, by the induction hypothesis, there exist polynomials $U_{1},W_{1}$ such that
\[
\widetilde{B}(\lambda,\partial)=U_{1}(\lambda)+W_{1}(\lambda+\partial)-W_{1}(\partial).
\]
Therefore
\[
\begin{aligned}
B(\lambda,\partial) &= \frac{k}{p+1}\bigl[(\lambda+\partial)^{p+1}-\partial^{\,p+1}\bigr]
                     +U_{1}(\lambda)+W_{1}(\lambda+\partial)-W_{1}(\partial)\\
&= U_{1}(\lambda)+\Bigl[\frac{k}{p+1}(\lambda+\partial)^{p+1}+W_{1}(\lambda+\partial)\Bigr]
                -\Bigl[\frac{k}{p+1}\partial^{\,p+1}+W_{1}(\partial)\Bigr].
\end{aligned}
\]
Taking $U:=U_{1}$ and $W(x):=\frac{k}{p+1}x^{p+1}+W_{1}(x)$ gives the desired form.
The induction is complete, proving (B2).
\end{proof}}
\begin{pro}\label{pro-p4}
   In Case (B) of Lemma \ref{lem-l2},  assume $A(\lambda,\partial)=b\neq0$ and $g_{1}(\lambda)\neq0$.
Then we have the following cases:
\begin{enumerate}
    \item[(B3)] If $g_{1}(\lambda)\neq b$, then
    $B(\lambda,\partial)=g_{1}(\lambda)F(\lambda+\partial)-bF(\partial)+bF(-\lambda)$,
    where $F(\partial)$ is an arbitrary polynomial in $\mathbb{C}[\partial]$.
    \item[(B4)] If $g_{1}(\lambda)= b$, then
    $B(\lambda,\partial)=F(\lambda+\partial)-F(\partial)+F(-\lambda)+H(\lambda)(\lambda+\partial)$,
    where $H(\lambda)$ and $F(\lambda)$ are arbitrary polynomials in  $\mathbb{C}[\lambda]$.
\end{enumerate}
\end{pro}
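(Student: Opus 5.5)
The plan is to reduce to Proposition \ref{pro-p3} by differentiating in $\partial$, and then determine the $\partial$-free part by substituting back. With $A(\lambda,\partial)=g_2(\lambda)=b\neq 0$, Eq.~(\ref{76-thq}) can be rearranged as
\begin{eqnarray*}
&&g_1(\mu)B(\lambda,\mu+\partial)-g_1(\lambda)B(\mu,\lambda+\partial)-b\bigl(B(\lambda,\partial)-B(\mu,\partial)\bigr)\\
&&\quad=-b\bigl(B(\lambda,-\lambda-\mu)-B(\mu,-\lambda-\mu)\bigr).
\end{eqnarray*}
The left-hand side is exactly the linear operator that appears in the case $A(\lambda,\partial)=0$, $b\neq0$ treated in Proposition \ref{pro-p3}. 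The right-hand side does not involve $\partial$.

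\textbf{Step 1.} Differentiate the identity with respect to $\partial$. The derivative $\partial_\partial B(\lambda,\partial)$ then satisfies the homogeneous equation of Proposition \ref{pro-p3}.
If $g_1(\lambda)\neq b$, case (B1) gives
\[
\partial_\partial B(\lambda,\partial)=g_1(\lambda)G(\lambda+\partial)-bG(\partial).
\]
If $g_1(\lambda)=b$, case (B2) gives
\[
\partial_\partial B(\lambda,\partial)=U(\lambda)+W(\lambda+\partial)-W(\partial).
\]
Integrating in $\partial$ with $F'=G$ or $F'=W$, I get
\[
B(\lambda,\partial)=g_1(\lambda)F(\lambda+\partial)-bF(\partial)+K(\lambda)
\]
in the first case, and
\[
B(\lambda,\partial)=F(\lambda+\partial)-F(\partial)+U(\lambda)\partial+K(\lambda)
\]
in the second. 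Here $K(\lambda)\in\mathbb{C}[\lambda]$ is an unknown integration "constant".

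\textbf{Step 2 (case $g_1\neq b$).} Substitute this form back into the identity. The $F$-part annihilates the left-hand side, since it is a solution of the homogeneous equation. Comparing the $K$-terms on the left with the full right-hand side, the $bK$ terms cancel, and I expect to obtain
\[
g_1(\mu)\bigl(K(\lambda)-bF(-\lambda)\bigr)=g_1(\lambda)\bigl(K(\mu)-bF(-\mu)\bigr).
\]
Setting $\mu=0$ (or using the divisibility argument of Lemma \ref{lem-l3} with $g_1$ in place of $f_2$) gives $K(\lambda)=bF(-\lambda)+c\,g_1(\lambda)$ for some constant $c$. The term $c\,g_1(\lambda)$ is absorbed by replacing $F$ with $F+c$: this shift adds $c(g_1(\lambda)-b)+bc=c\,g_1(\lambda)$ to $B$. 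This yields (B3). Finally I check directly that every such $B$ satisfies Eq.~(\ref{76-thq}).

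\textbf{Step 3 (case $g_1=b$).} Write $U(\lambda)\partial=U(\lambda)(\lambda+\partial)-\lambda U(\lambda)$, set $H=U$, and absorb $-\lambda U(\lambda)$ into $K$. Substituting back, the $F$-part and the $H(\lambda)(\lambda+\partial)$ part should both cancel against their contributions to the right-hand side, except for terms that only involve $K$ and $F(-\lambda)$. This should leave the condition that $K(\lambda)-F(-\lambda)$ is constant. That constant can be absorbed into $F$: if $F\mapsto F+c$, then $F(\lambda+\partial)-F(\partial)$ is unchanged while $F(-\lambda)$ gains $c$. This gives (B4), and again I verify sufficiency directly.

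The main obstacle I expect is the bookkeeping in Step 3. The term $H(\lambda)(\lambda+\partial)$ evaluated at $\partial=-\lambda-\mu$ produces $-H(\lambda)\mu$, and one must confirm that these contributions cancel symmetrically in $\lambda,\mu$ rather than forcing a constraint on $H$. If they do not cancel, I would first test $H$ of low degree to detect the correct constraint before redoing the general computation.
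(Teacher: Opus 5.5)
Your proposal is correct, and it takes a genuinely different route from the paper.

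\textbf{The paper's route.} The paper works directly with Eq.~\eqref{93-thq} by induction on $p=\deg_\partial B(\lambda,\partial)$. Comparing $\partial^{p}$-coefficients, and also $\partial^{p-1}$-coefficients when $g_1(\lambda)=b$, it forces $B_p(\lambda)=\alpha\bigl(g_1(\lambda)-b\bigr)$, resp.\ $B_p(\lambda)=k\lambda$. It then subtracts an explicit particular solution $B^{(p)}(\lambda,\partial)$ of the claimed shape. The low-degree cases ($p=0$, and $p=1$ when $g_1=b$) are treated by hand.

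\textbf{Your route.} You compare with the case $A(\lambda,\partial)=0$. The only new term is $b\bigl(B(\lambda,-\lambda-\mu)-B(\mu,-\lambda-\mu)\bigr)$, which is free of $\partial$, and the rest of the equation commutes with differentiation in $\partial$. So the $\partial$-derivative of $B$ solves the homogeneous equation of Proposition~\ref{pro-p3}, and after integrating only a one-variable constant $K(\lambda)$ remains.

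I checked your Step~2 identity $g_1(\mu)\bigl(K(\lambda)-bF(-\lambda)\bigr)=g_1(\lambda)\bigl(K(\mu)-bF(-\mu)\bigr)$. I also checked the Step~3 condition that $K(\lambda)-F(-\lambda)$ is constant. The obstacle you anticipated does not arise: $H(\lambda)(\lambda+\partial)$ contributes the same term $b\bigl(\mu H(\lambda)-\lambda H(\mu)\bigr)$ to both sides, so $H$ is unconstrained.

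\textbf{Comparison.} Your argument is shorter. It also explains structurally where the extra terms $bF(-\lambda)$ and $H(\lambda)(\lambda+\partial)$ in (B3) and (B4) come from; the latter arises from integrating the $U(\lambda)$ term of (B2). However, its completeness rests entirely on Proposition~\ref{pro-p3}, which the paper only quotes from an external lemma rather than proving. The paper's induction is self-contained.

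\textbf{One small repair.} Setting $\mu=0$ in Step~2 only determines $K$ when $g_1(0)\neq0$; otherwise it merely gives $K(0)=bF(0)$. So use your divisibility alternative instead, choosing $\mu_0$ with $g_1(\mu_0)\neq0$.
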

\begin{proof}
    When $A(\lambda,\partial)=b\neq0$, Eq.~(\ref{76-thq}) reduces to
\begin{equation}\label{93-thq}
b\bigl(B(\lambda,-\lambda-\mu)-B(\mu,-\lambda-\mu)\bigr)+g_{1}(\mu)B(\lambda,\mu+\partial)-g_{1}(\lambda)B(\mu,\lambda+\partial)=b\bigl(B(\lambda,\partial)-B(\mu,\partial)\bigr).
\end{equation}
If $B(\lambda,\partial)=0$, then the statement holds with $F(\partial)=0$ in (B3) and  $F(\lambda)=H(\lambda)=0$ in (B4).  We now assume $B(\lambda,\partial)\neq0$ and write
$B(\lambda,\partial)=\sum_{i=0}^{p}B_i(\lambda)\partial^{\,i}$ with $B_p(\lambda)\neq0$.
When $p\ge1$, comparing the coefficients of $\partial^{p}$ in Eq.~(\ref{93-thq}) yields
\begin{equation}\label{94-thq}
B_{p}(\lambda)\bigl(g_{1}(\mu)-b\bigr)=B_{p}(\mu)\bigl(g_{1}(\lambda)-b\bigr).
\end{equation}
We discuss it in the following two subcases.

\medskip
\noindent\textbf{Case (i): $g_{1}(\lambda)\neq b$.}
When $p\ge1$,  by the same argument as in Proposition \ref{lem-l6}, there exists a nonzero constant $\alpha$ such that
\begin{equation}\label{95-thq}
B_{p}(\lambda)=\alpha\bigl(g_{1}(\lambda)-b\bigr).
\end{equation}
We prove by induction on $p$ that any nonzero solution $B(\lambda,\partial)$ of Eq.~(\ref{93-thq}) has the form
$B(\lambda,\partial)=g_{1}(\lambda)F(\lambda+\partial)-bF(\partial)+bF(-\lambda)$ for some polynomial $F(\partial)$ when $g_1(\lambda)\neq b$.

First, we consider the case when $p=0$.
Then $B(\lambda,\partial)=B_0(\lambda)$ and Eq.~(\ref{93-thq}) becomes
\begin{equation}\label{96-thq}
B_0(\lambda)g_{1}(\mu)=B_0(\mu)g_{1}(\lambda).
\end{equation}
Then we obtain $B_0(\lambda)=\beta g_{1}(\lambda)$ for a nonzero constant $\beta$.  Choosing $F(\partial)=\beta$ gives the required form.

Assume that the claim holds for all solutions with $\deg_{\partial}B(\lambda,\partial)<p$, where $p\ge1$.  Let $B(\lambda,\partial)$ be a solution of Eq.~(\ref{93-thq}) with $\deg_{\partial}B(\lambda,\partial)=p$.  From Eq.~(\ref{95-thq}), we have $B_p(\lambda)=\alpha(g_{1}(\lambda)-b)$, where $\alpha\in \mathbb{C}\setminus \{0\}$.  Define
\begin{eqnarray}
B^{(p)}(\lambda,\partial):=\alpha g_{1}(\lambda)(\lambda+\partial)^{p}-\alpha b\,\partial^{\,p}+b\alpha(-\lambda)^{p}.
\end{eqnarray}
A direct verification shows that $B^{(p)}(\lambda,\partial)$ satisfies Eq.~(\ref{93-thq}).  Moreover, $\deg_{\partial}B^{(p)}(\lambda,\partial)=p$ and the coefficient of $\partial^{p}$ in $B^{(p)}(\lambda,\partial)$ is $\alpha(g_{1}(\lambda)-b)=B_{p}(\lambda)$.  Set
\begin{eqnarray}
\widetilde{B}(\lambda,\partial):=B(\lambda,\partial)-B^{(p)}(\lambda,\partial).
\end{eqnarray}
Then $\widetilde{B}(\lambda,\partial)$ satisfies  Eq.~(\ref{93-thq}) and $\deg_{\partial}\widetilde{B}(\lambda,\partial)\le p-1$.
If $\widetilde{B}(\lambda,\partial)=0$, then $B(\lambda,\partial)=B^{(p)}(\lambda,\partial)$ already has the required form with $F(\partial)=\alpha \partial^{p}$.
If $\widetilde{B}(\lambda,\partial)\neq0$, the induction hypothesis yields a polynomial $F_{1}(\partial)$ such that
$\widetilde{B}(\lambda,\partial)=g_{1}(\lambda)F_{1}(\lambda+\partial)-bF_{1}(\partial)+bF_{1}(-\lambda)$.  Consequently, we have
\begin{eqnarray*}
\begin{aligned}
B(\lambda,\partial) &= \alpha g_{1}(\lambda)(\lambda+\partial)^{p}-\alpha b\,\partial^{p}+b\alpha(-\lambda)^{p}\\
&\quad +g_{1}(\lambda)F_{1}(\lambda+\partial)-bF_{1}(\partial)+bF_{1}(-\lambda)\\
&= g_{1}(\lambda)\bigl(\alpha(\lambda+\partial)^{p}+F_{1}(\lambda+\partial)\bigr)
   -b\bigl(\alpha\partial^{p}+F_{1}(\partial)\bigr)
   +b\bigl(\alpha(-\lambda)^{p}+F_{1}(-\lambda)\bigr)
   .
\end{aligned}
\end{eqnarray*}
Defining $F(\partial):=\alpha \partial^{p}+F_{1}(\partial)$  completes the induction, which proves (B3).

\medskip
\noindent\textbf{Case (ii): $g_{1}(\lambda)= b$.}
Then Eq.~(\ref{93-thq}) reduces to
\begin{equation}\label{97-thq}
B(\lambda,-\lambda-\mu)-B(\mu,-\lambda-\mu)+B(\lambda,\mu+\partial)-B(\mu,\lambda+\partial)=B(\lambda,\partial)-B(\mu,\partial).
\end{equation}
When $p\ge1$, comparing the coefficients of $\partial^{p}$ gives the identity $B_{p}(\lambda)-B_{p}(\mu)=B_{p}(\lambda)-B_{p}(\mu)$
which imposes no restriction.
When $p=1$, we have $B(\lambda,\partial)=B_0(\lambda)+B_1(\lambda)\partial$.
Substituting this into Eq.~(\ref{97-thq}) yields
$B(\lambda,\partial)=B_1(\lambda)(\lambda+\partial)+t$, where $t\in \mathbb{C}$.
If $p\ge2$, comparing the coefficients of $\partial^{p-1}$ yields $\lambda B_{p}(\mu)=\mu B_{p}(\lambda) $.
By the same method as in Proposition~\ref{lem-l6}, there exists a nonzero constant $k$ such that $B_{p}(\lambda)=k\lambda$.

We now prove by induction on $p$ that any solution of Eq.~(\ref{97-thq}) can be written as
$B(\lambda,\partial)=F(\lambda+\partial)-F(\partial)+F(-\lambda)+H(\lambda)(\lambda+\partial)$ for some polynomials $F(\lambda)$ and  $H(\lambda)$ when $g_1(\lambda)=b$.

When $p=0$, we have $B(\lambda,\partial)=B_0(\lambda)$. Substituting this into Eq.~(\ref{97-thq}) shows that $B_0(\lambda)$ is a nonzero constant, which we denote simply by $B_0$. Taking $F(\lambda)=B_0$ and $H(\lambda)=0$ yields the desired form.
For $p=1$, from the previous discussion we have
\begin{equation}\label{98-thq}
    B(\lambda,\partial)=B_1(\lambda)(\lambda+\partial)+t,\;\; t\in \mathbb{C}.
\end{equation} Taking $F(\lambda)=t$ and $H(\lambda)=B_1(\lambda)$ yields the desired form.

Assume that the claim holds for all solutions with $\deg_{\partial}B(\lambda,\partial)<p$, where $p\ge2$.  Let $B(\lambda,\partial)$ be a solution of Eq.~(\ref{97-thq}) with $\deg_{\partial}B(\lambda,\partial)=p$. Note that $B_p(\lambda)=k\lambda$, where $k\in \mathbb{C}\setminus \{0\}$.  Define
\begin{eqnarray}
B^{(p)}(\lambda,\partial):=\frac{k}{p+1}\bigl((\lambda+\partial)^{p+1}-\partial^{\,p+1}+(-\lambda)^{p+1}\bigr).
\end{eqnarray}
One checks that $B^{(p)}(\lambda,\partial)$ satisfies Eq.~(\ref{97-thq}).  The coefficient of $\partial^{p}$ in $B^{(p)}(\lambda,\partial)$ is $\frac{k}{p+1}\binom{p+1}{p}\lambda=k\lambda=B_{p}(\lambda)$, and $\deg_{\partial}B^{(p)}(\lambda,\partial)=p$.
Set $\widetilde{B}(\lambda,\partial):=B(\lambda,\partial)-B^{(p)}(\lambda,\partial)$.  Then $\widetilde{B}(\lambda,\partial)$ satisfies Eq.~(\ref{97-thq}) and $\deg_{\partial}\widetilde{B}(\lambda,\partial)\le p-1$. If $\widetilde{B}(\lambda,\partial)=0$, then $B(\lambda,\partial)=B^{(p)}(\lambda,\partial)$ already has the required form with $F(\lambda)= \frac{k}{p+1}\lambda^{p+1}$ and $H(\lambda)=0$.
If $\widetilde{B}(\lambda,\partial)\neq0$, by the induction hypothesis, there exist polynomials $F_{1}(\lambda)$ and $H_{1}(\lambda)$ such that
$\widetilde{B}(\lambda,\partial)=F_{1}(\lambda+\partial)-F_{1}(\partial)+F_{1}(-\lambda)+H_{1}(\lambda)(\lambda+\partial)$.
Therefore, we have
\begin{eqnarray*}
\begin{aligned}
B(\lambda,\partial) &= \frac{k}{p+1}\bigl((\lambda+\partial)^{p+1}-\partial^{\,p+1}+(-\lambda)^{p+1}\bigr) \\
&\quad +F_{1}(\lambda+\partial)-F_{1}(\partial)+F_{1}(-\lambda)+H_{1}(\lambda)(\lambda+\partial)\\
&= \Bigl(\frac{k}{p+1}(\lambda+\partial)^{p+1}+F_{1}(\lambda+\partial)\Bigr)
   -\Bigl(\frac{k}{p+1}\partial^{\,p+1}+F_{1}(\partial)\Bigr)\\
&\quad + \Bigl(\frac{k}{p+1}(-\lambda)^{p+1}+F_{1}(-\lambda)\Bigr) + H_{1}(\lambda)(\lambda+\partial).
\end{aligned}
\end{eqnarray*}
Defining $F(\lambda):=\frac{k}{p+1}\lambda^{p+1}+F_{1}(\lambda)$ and $H(\lambda):=H_{1}(\lambda)$ gives the required form.  This completes the induction, proving (B4).
\end{proof}

Finally, we turn to the case when $f_{1}(\lambda)=a\in \mathbb{C}\setminus\{0\}$.
\begin{pro}\label{pro-p5}
If $f_{1}(\lambda)=a\in \mathbb{C}\setminus\{0\}$, then we have the following cases:
\begin{enumerate}
    \item[(i)] If $f_{2}(\lambda)=0$, then
    $A(\lambda,\partial)=g_{1}(-\lambda-\partial)$,
    $g_{2}(\lambda)=d\in\mathbb{C}$,
    $B(\lambda,\partial)=\frac{1}{a}\bigl(g_{1}(\lambda)g_{1}(-\lambda-\partial)-dg_{1}(-\partial)\bigr)$.
    \item[(ii)] If $f_{2}(\lambda)\neq0$, then
    $A(\lambda,\partial)=g_{1}(-\lambda-\partial)-\frac{1}{a}f_{2}(\lambda)g_{1}(-\partial)$,
    $g_{2}(\lambda)=\frac{1}{a}f_{2}(\lambda)g_{1}(\lambda)$,
    $B(\lambda,\partial)=\frac{1}{a}g_{1}(\lambda)\bigl(g_{1}(-\lambda-\partial)-\frac{1}{a}f_{2}(\lambda)g_{1}(-\partial)\bigr)+c$,
    where $c\in\mathbb{C}$. Moreover, if $c \neq 0$, then $f_{2}(\lambda)=\dfrac{a}{2}$.
\end{enumerate}
\end{pro}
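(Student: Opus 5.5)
The plan is to exploit that $f_1(\lambda)=a\neq 0$ lets us divide by $a$, so several of the equations (\ref{33-thq})--(\ref{44-thq}) become \emph{explicit formulas} for the unknowns rather than constraints. Eq.~(\ref{33-thq}) and Eq.~(\ref{34-thq}) become trivial.

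\textbf{Step 1: determine $A$.} With $f_1=a$, Eq.~(\ref{35-thq}) reads $a g_1(\lambda)=aA(\mu,-\lambda-\mu)+f_2(\mu)g_1(\lambda+\mu)$. We substitute $\partial=-\lambda-\mu$, i.e.\ $\lambda=-\mu-\partial$, which is an invertible change of variables. This gives
\[
A(\mu,\partial)=g_1(-\mu-\partial)-\tfrac1a f_2(\mu)g_1(-\partial)
\]
in both cases. In case (i) it reduces to $A=g_1(-\lambda-\partial)$.

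\textbf{Step 2: determine $g_2$.} We insert the formula for $A(\lambda,-\lambda-\mu)=g_1(\mu)-\frac1a f_2(\lambda)g_1(\lambda+\mu)$ into Eq.~(\ref{38-thq}). The $g_1(\mu)$ terms cancel, leaving
\[
f_2(\lambda)\bigl(g_2(\lambda+\mu)-\tfrac1a f_2(\lambda+\mu)g_1(\lambda+\mu)\bigr)=0 .
\]
\begin{itemize}
\item If $f_2\neq 0$, we cancel $f_2(\lambda)$ and obtain $g_2=\frac1a f_2 g_1$.
\item If $f_2=0$, Eq.~(\ref{38-thq}) gives nothing. Instead we use Eq.~(\ref{40-thq}), which with $f_2=0$ becomes $(g_2(\lambda)-g_2(\mu))g_2(\lambda+\mu)=0$. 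Hence $g_2$ is a constant $d$, where $d=0$ is allowed.
\end{itemize}

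\textbf{Step 3: determine $B$.} Eq.~(\ref{43-thq}) contains the term $f_1(\lambda)B(\mu,\lambda+\partial)=aB(\mu,\lambda+\partial)$, and every other term is already known. Setting $\lambda=0$ there solves for $aB(\mu,\partial)$ in terms of $A$, $g_1$, $g_2$ and $f_2$.
\begin{itemize}
\item In case (i), the terms $g_1(\mu)(A(\mu,\partial)-A(0,\mu+\partial))$ vanish because $A$ depends only on $\lambda+\partial$. This yields $aB=A(0,-\mu)A(\mu,\partial)-dA(0,\partial)=g_1(\mu)g_1(-\mu-\partial)-dg_1(-\partial)$, as claimed.
\item In case (ii), the same substitution gives the stated expression, but only up to the quantity $f_2(0)B(\mu,\partial)$ and related terms.
\end{itemize}
To handle case (ii) I would first extract information on $f_2$ from Eq.~(\ref{41-thq}) with $A$ inserted. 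Comparing the $g_1(-\partial)$-type terms should force relations of the form $f_2(\lambda)f_2(\mu)$ symmetric, together with a consistency condition. I would then write $B=\frac1a g_1(\lambda)A(\lambda,\partial)+E(\lambda,\partial)$ and plug this into Eq.~(\ref{43-thq}), obtaining a homogeneous linear equation for $E$. From it I expect to deduce that $E$ is independent of $\partial$ and then of $\lambda$, i.e.\ $E=c$.

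\textbf{Step 4: verification and the condition on $c$.} Finally we substitute everything into the remaining equations (\ref{39-thq}), (\ref{40-thq}), (\ref{41-thq}) and (\ref{44-thq}). The constant $c$ survives in Eq.~(\ref{40-thq}) as $f_2(\lambda+\mu)c$ on both sides, so it cancels there. In Eq.~(\ref{44-thq}), however, it appears through $g_2(\lambda)c-g_2(\mu)c$ and $A(\lambda+\mu,\partial)\cdot c$ terms. Matching these should give $c\,(2f_2-a)=0$, and hence $f_2=a/2$ when $c\neq0$.

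\textbf{Main obstacle.} I expect the main difficulty to be the case (ii) analysis in Step 3: proving that the correction $E$ is a constant and isolating exactly the condition $f_2=a/2$. This requires careful bookkeeping of which terms in Eq.~(\ref{43-thq}) and Eq.~(\ref{44-thq}) depend on $\partial$. I would organize it by comparing the highest $\partial$-degree coefficients first, as was done in Lemma~\ref{lem-l4}.
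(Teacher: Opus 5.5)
Your Steps 1--2 and all of case (i) agree with the paper. Your ansatz $B=\frac1a g_1(\lambda)A(\lambda,\partial)+E(\lambda,\partial)$ in case (ii) is exactly the paper's splitting $B=B_0+\widetilde B$. Since $B_0$ does satisfy Eq.~\eqref{43-thq}, $E$ obeys $aE(\mu,\lambda+\partial)=f_2(\lambda)\bigl(E(\lambda+\mu,\partial)+E(\mu,\partial)\bigr)$.

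The gap is that this equation alone does not make $E$ constant. When $f_2=\frac a2$ it is solved by $E(\lambda,\partial)=\phi_0+\phi_1\bigl(\lambda+\frac12\partial\bigr)$ for every $\phi_1\in\mathbb{C}$. The leading-$\partial$-coefficient comparison you propose does show that the top $\partial$-coefficient of $E$ is constant and that $f_2=\frac a2$ whenever $\deg_\partial E\ge 1$. But solving the resulting equation $2E(\mu,\lambda+\partial)=E(\lambda+\mu,\partial)+E(\mu,\partial)$ leads exactly to this family and no further. The $\phi_1$-term is eliminated only by Eq.~\eqref{39-thq} (or Eq.~\eqref{40-thq}), which for $E$ reduces to the symmetry $E(\lambda,-\lambda-\mu)=E(\mu,-\lambda-\mu)$; this is the paper's Eq.~\eqref{114-thq}. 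Eq.~\eqref{44-thq} cannot do this job: for $g_1=0$ it is vacuous, while $\phi_1\neq0$ still violates Eq.~\eqref{39-thq}. In your plan, Eqs.~\eqref{39-thq} and \eqref{40-thq} enter only as a final verification, so the conclusion $E=c$ is never actually reached.

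Second, you attribute the condition $f_2=\frac a2$ to the wrong equation, and the computation you predict is incorrect. Replace $B$ by $B+c$ in Eq.~\eqref{44-thq}. The terms $A(\lambda+\mu,\partial)c$ cancel between the two sides, while the $g_1$-terms you left out survive. The net requirement is $c\bigl[(2g_2-g_1)(\lambda)-(2g_2-g_1)(\mu)\bigr]=0$, i.e.\ $c\bigl(\frac2a f_2(\lambda)-1\bigr)g_1(\lambda)\in\mathbb{C}$, which gives no information on $f_2$ when $g_1=0$. The condition actually comes from Eq.~\eqref{43-thq}: with $E=c$ the homogeneous equation reads $ac=2cf_2(\lambda)$.

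Similarly, your intended use of Eq.~\eqref{41-thq} to extract information on $f_2$ is a dead end. Once $A$ is inserted it is an identity, reducing to the symmetry in $\lambda,\mu$ of $ag_1(-\lambda-\mu-\partial)-\frac1a f_2(\lambda)f_2(\mu)g_1(-\partial)$.

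A correct completion runs as follows:
\begin{enumerate}
\item From Eq.~\eqref{43-thq}, $E$ is either constant, or $f_2=\frac a2$ and $E$ has the form above.
\item Eq.~\eqref{39-thq} then forces $\phi_1=0$.
\item Finally, $f_2=\frac a2$ follows from Eq.~\eqref{43-thq} whenever $c\neq0$.
\end{enumerate}
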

\begin{proof}
When $f_{1}(\lambda)=a$, Eqs.~(\ref{33-thq})--(\ref{44-thq}) can be simplified into
\begin{eqnarray}
    \label{99-thq}&&ag_{1}(\lambda)=aA(\mu,-\lambda-\mu)+f_{2}(\mu)g_1(\lambda+\mu),\\
    \label{102-thq}&&f_2(\lambda+\mu)g_{1}(\mu)=f_2(\lambda+\mu)A(\lambda,-\lambda-\mu)+f_{2}(\lambda)g_2(\lambda+\mu),\\
     \label{103-thq}&&aA(\mu,\lambda+\partial)+f_{2}(\mu)A(\lambda,\partial)=aA(\lambda,\mu+\partial)+f_{2}(\lambda)A(\mu,\partial),\\
    \label{104-thq}&&aB(\lambda,-\lambda-\mu)+g_1(\lambda+\mu)g_{2}(\lambda)=aB(\mu,-\lambda-\mu)+g_1(\lambda+\mu)g_{2}(\mu),\\
    \label{105-thq}&&f_2(\lambda+\mu)B(\lambda,-\lambda-\mu)+g_2(\lambda)g_{2}(\lambda+\mu)=f_2(\lambda+\mu)B(\mu,-\lambda-\mu)+g_2(\mu)g_2(\lambda+\mu),\\
    \label{107-thq}&&g_{1}(\mu)A(\lambda+\mu,\partial)-g_{1}(\mu)A(\lambda,\mu+\partial)-f_{2}(\lambda)B(\mu,\partial)\notag\\
    &&\quad=A(\lambda,-\lambda-\mu)A(\lambda+\mu,\partial)+f_{2}(\lambda)B(\lambda+\mu,\partial)-aB(\mu,\lambda+\partial)-g_{2}(\mu)A(\lambda,\partial),\\
    \label{108-thq}&&A(\lambda+\mu,\partial)B(\lambda,-\lambda-\mu)+g_{2}(\lambda)B(\lambda+\mu,\partial)-g_{1}(\lambda)B(\mu,\lambda+\partial)-g_{2}(\mu)B(\lambda,\partial)\notag\\
    &&\quad=A(\lambda+\mu,\partial)B(\mu,-\lambda-\mu)+g_{2}(\mu)B(\lambda+\mu,\partial)-g_{1}(\mu)B(\lambda,\mu+\partial)-g_{2}(\lambda)B(\mu,\partial).
\end{eqnarray}

    From Eq.~(\ref{99-thq}), we obtain
\begin{eqnarray}
A(\mu,-\lambda-\mu)=g_{1}(\lambda)-\frac{1}{a}f_{2}(\mu)g_{1}(\lambda+\mu),
\end{eqnarray}
which implies
\begin{equation}\label{109-thq}
A(\lambda,\partial)=g_{1}(-\lambda-\partial)-\frac{1}{a}f_{2}(\lambda)g_{1}(-\partial).
\end{equation}
Substituting Eq.~(\ref{109-thq}) into Eq.~(\ref{103-thq}) shows that Eq.~(\ref{103-thq}) holds automatically.
Inserting Eq.~(\ref{109-thq}) into  Eq.~(\ref{102-thq}) gives
\begin{align}
&f_{2}(\lambda)\Bigl(g_{2}(\lambda+\mu)-\frac{1}{a}f_{2}(\lambda+\mu)g_{1}(\lambda+\mu)\Bigr)=0.\label{111-thq}
\end{align}
Hence either $f_{2}(\lambda)=0$, or $f_{2}(\lambda)\neq0$ and then
$g_{2}(\lambda)=\dfrac{1}{a}f_{2}(\lambda)g_{1}(\lambda)$.  We discuss these two cases separately.

\medskip
\noindent\textbf{Case (i). $f_{2}(\lambda)=0$.}
Then Eq.~(\ref{109-thq}) reduces to $A(\lambda,\partial)=g_{1}(-\lambda-\partial)$.
Moreover, when $f_{2}(\lambda)=0$, Eq.~(\ref{105-thq}) implies that $g_{2}(\lambda)$ is a constant. Write
$g_{2}(\lambda)=d\in\mathbb{C}$.
Substituting $A(\lambda,\partial)$ and $g_{2}(\lambda)$ into Eq.~(\ref{107-thq}) yields
\begin{eqnarray}
B(\lambda,\partial)=\frac{1}{a}\bigl(g_{1}(\lambda)g_{1}(-\lambda-\partial)-dg_{1}(-\partial)\bigr).
\end{eqnarray}
Then one checks that Eqs.~(\ref{104-thq}) and (\ref{108-thq}) naturally hold.

\medskip
\noindent\textbf{Case (ii). $f_{2}(\lambda)\neq0$, $g_{2}(\lambda)=\dfrac{1}{a}f_{2}(\lambda)g_{1}(\lambda)$.}
Here $A(\lambda,\partial)$ is given by Eq.~(\ref{109-thq}).
Define a particular solution
\[
B_{0}(\lambda,\partial)=\frac{1}{a}g_{1}(\lambda)\Bigl(g_{1}(-\lambda-\partial)-\frac{1}{a}f_{2}(\lambda)g_{1}(-\partial)\Bigr).
\]
A direct verification shows that $B_{0}(\lambda,\partial)$ satisfies all equations.
Now let $B(\lambda,\partial)$ be any solution and set $\widetilde B(\lambda,\partial)=B(\lambda,\partial)-B_{0}(\lambda,\partial)$.
Then $\widetilde B(\lambda,\partial)$ also satisfies the equations.
Plugging $B(\lambda,\partial)=B_0(\lambda,\partial)+\widetilde B(\lambda,\partial)$ into  Eq.~(\ref{107-thq}), we obtain
\begin{align}
a\widetilde B(\mu,\lambda+\partial) &= f_{2}(\lambda)\bigl(\widetilde B(\lambda+\mu,\partial)+\widetilde B(\mu,\partial)\bigr).\label{113-thq}
\end{align}
From Eqs.~(\ref{104-thq})--(\ref{105-thq}), we also get
\begin{equation}\label{114-thq}
\widetilde B(\lambda,-\lambda-\mu)=\widetilde B(\mu,-\lambda-\mu).
\end{equation}
We claim that $\widetilde B(\lambda,\partial)$ satisfying Eqs.~(\ref{113-thq})--(\ref{114-thq}) must be a constant.
If $\widetilde B(\lambda,\partial)=0$, the claim is trivial.  Assume $\widetilde B(\lambda,\partial)\neq0$ and write
$\widetilde B(\lambda,\partial)=\sum_{i=0}^{p} \widetilde{B}_{i}(\lambda)\partial^{\,i}$ with  $\widetilde{B}_{p}(\lambda)\neq0$.
When $p=0$, $\widetilde B(\lambda,\partial)=\widetilde B_0(\lambda)$, and by Eq.~(\ref{114-thq}), $\widetilde B_0(\lambda)$ is a nonzero constant. When $p\ge 1$, comparing the coefficients of $\partial^{p}$ in Eq.~(\ref{113-thq}) gives
\begin{equation}\label{115-thq}
a \widetilde{B}_{p}(\mu)=f_{2}(\lambda)\bigl(\widetilde{B}_{p}(\lambda+\mu)+\widetilde{B}_{p}(\mu)\bigr).
\end{equation}
Write $\widetilde{B}_{p}(\lambda)=\sum_{i=0}^{q}b_{i}\lambda^{i}$ with $b_{q}\neq0$.
Regarding the right-hand side of Eq.~(\ref{115-thq}) as a polynomial in $\lambda$, the term
$\widetilde{B}_{p}(\lambda+\mu)$ contributes degree $q$ while $\widetilde{B}_{p}(\mu)$ is constant in $\lambda$.
If $q\ge1$, the left-hand side is independent of $\lambda$, whereas the right-hand side
contains $b_{q}\lambda^{q}$ which cannot cancel because
$f_{2}(\lambda)\neq0$ and $b_{q}\neq0$.  Hence $q=0$, i.e.\ $\widetilde{B}_{p}(\lambda)=B\in \mathbb{C}\setminus\{0\}$.
Substituting $\widetilde{B}_{p}(\lambda)=B$ into Eq.~(\ref{115-thq}) yields $aB=f_{2}(\lambda)(B+B)$. So
$f_{2}(\lambda)=\dfrac{a}{2}$.  Then Eq.~(\ref{113-thq}) becomes
\begin{equation}\label{116-thq}
2\,\widetilde B(\mu,\lambda+\partial)=\widetilde B(\lambda+\mu,\partial)+\widetilde B(\mu,\partial).
\end{equation}
Setting $\partial=0$ in Eq.~(\ref{116-thq}) gives
$2\,\widetilde B(\mu,\lambda)=\widetilde B(\lambda+\mu,0)+\widetilde B(\mu,0)$.
Define $\Phi(\lambda)=\widetilde B(\lambda,0)$ . Then
\begin{equation}\label{117-thq}
\widetilde B(\lambda,\mu)=\frac12\Phi(\lambda)+\frac12\Phi(\lambda+\mu).
\end{equation}
Note that $\Phi(\lambda)\neq0$ because
$\widetilde B(\lambda,\partial)\neq0$.
Inserting Eq.~(\ref{117-thq}) back into Eq.~(\ref{116-thq}), we obtain
\begin{eqnarray}
\Phi(\lambda+\mu)+\Phi(\mu+\partial)=\Phi(\lambda+\mu+\partial)+\Phi(\mu).
\end{eqnarray}
Taking $\mu=0$ gives
\begin{equation}\label{118-thq}
\Phi(\lambda)+\Phi(\partial)=\Phi(\lambda+\partial)+\Phi(0).
\end{equation}
Let $\Phi(\partial)=\sum_{i=0}^{r}\phi_{i}\partial^{i}$ with $\phi_{r}\neq0$.
If $r\ge2$, substituting into Eq.~(\ref{118-thq}) and comparing the coefficient of
$\lambda^{r-1}\partial$ yields $\phi_{r}\,r=0$, contradicting $\phi_{r}\neq0$.
Thus $r\le1$, i.e.\ $\Phi(\partial)=\phi_{0}+\phi_{1}\partial$.
Substituting it into Eq.~(\ref{117-thq}), we get
\begin{eqnarray}
\widetilde B(\lambda,\partial)=\frac{1}{2}\phi_{1}\partial+\phi_{1}\lambda+\phi_{0}.
\end{eqnarray}
Taking it into Eq.~(\ref{114-thq}), $\phi_{1}$ must vanish, leaving
$\widetilde B(\lambda,\partial)=\phi_{0}$, a constant.
Denoting this constant by $c$, we  obtain
\begin{eqnarray}
B(\lambda,\partial)=B_{0}(\lambda,\partial)+c
=\frac{1}{a}g_{1}(\lambda)\Bigl(g_{1}(-\lambda-\partial)-\frac{1}{a}f_{2}(\lambda)g_{1}(-\partial)\Bigr)+c.
\end{eqnarray}
Moreover, if $c \neq 0$, then $f_{2}(\lambda)=\dfrac{a}{2}$. A direct verification shows that the above solution satisfies Eq.~(\ref{108-thq}).
This completes the proof of Case (ii).
\end{proof}
\begin{thm}\label{thm-1}
    Any compatible left-symmetric conformal algebra structure on solvable Lie conformal algebras which are free of rank two as $\mathbb{C}[\partial]$-modules is $R=\mathbb{C}[\partial]x\oplus \mathbb{C}[\partial]y$ with one of the following $\lambda$-products:
\begin{enumerate}
\item \begin{align*}
x_\lambda x = 0, \quad x_\lambda y = 0, \;\;y_\lambda x= 0,  \quad y_\lambda y = B(\lambda,\partial)x,
\end{align*}
where $B(\lambda,\partial)\in\mathbb{C}[\lambda,\partial]$. Denote this left-symmetric conformal algebra by $\mathcal{L}_1(B(\lambda, \partial))$;

\item \begin{align*}
x_\lambda x = 0, \quad x_\lambda y = 0, \;\;y_\lambda x= 0,  \quad y_\lambda y = V(\partial)x+by,
\end{align*}
where $V(\partial)\in\mathbb{C}[\partial]$ and $b\in \mathbb{C}\setminus\{0\}$. Denote this left-symmetric conformal algebra by $\mathcal{L}_2(V(\partial),b)$;

\item 
 \begin{align*}
x_\lambda x = 0, \quad x_\lambda y = bx, \;\;y_\lambda x= 0,  \quad y_\lambda y = (U(\lambda)+V(\partial))x+by,
\end{align*}
where $U(\lambda)\in \mathbb{C}[\lambda]$, $V(\partial)\in \mathbb{C}[\partial]$, and $b\in \mathbb{C}\setminus\{0\}$. Denote this left-symmetric conformal algebra by $\mathcal{L}_3(U(\lambda),V(\partial),b)$;

\item 
 \begin{align*}
x_\lambda x &= 0, \quad x_\lambda y = f_2(\lambda)D(\partial)x+f_2(\lambda)y, \\
y_\lambda x&= 0,  \quad y_\lambda y = -f_2(\lambda)D(-\lambda)D(\partial)x-f_2(\lambda)D(-\lambda)y,
\end{align*}
where $D(\partial)\in \mathbb{C}[\partial]$ and $f_2(\lambda)\in \mathbb{C}[\lambda]\setminus \{0\}$. Denote this left-symmetric conformal algebra by $\mathcal{L}_4(f_2(\lambda), D(\partial))$;

\item 
 \begin{align*}
x_\lambda x = 0, \quad x_\lambda y = 0, \;\;y_\lambda x= g_1(\lambda)x,  \quad y_\lambda y = g_1(\lambda)F(\lambda+\partial)x,
\end{align*}
where $F(\partial)\in \mathbb{C}[\partial]$ and $g_1(\lambda)\in \mathbb{C}[\lambda]\setminus \{0\}$. Denote this left-symmetric conformal algebra by $\mathcal{L}_5(g_1(\lambda), F(\partial))$;

\item 
 \begin{align*}
x_\lambda x = 0, \quad x_\lambda y = 0, \;\;y_\lambda x= g_1(\lambda)x,  \quad y_\lambda y = (g_1(\lambda)F(\lambda+\partial)-bF(\partial))x+by,
\end{align*}
where $F(\partial)\in \mathbb{C}[\partial]$, $g_{1}(\lambda) \in \mathbb{C}[\lambda]\setminus \{0\}$, $b\in \mathbb{C}\setminus\{0\}$ and $g_1(\lambda)\neq b$.
 Denote this left-symmetric conformal algebra by $\mathcal{L}_6(g_1(\lambda), F(\partial),b)$;

\item 
 \begin{align*}
x_\lambda x = 0, \quad x_\lambda y = 0, \;\;y_\lambda x= bx,  \quad y_\lambda y = (U(\lambda)+W(\lambda+\partial)-W(\partial))x+by,
\end{align*}
where $U(\lambda)\in \mathbb{C}[\lambda]$, $W(\partial)\in \mathbb{C}[\partial]$ and $b\in \mathbb{C}\setminus\{0\}$.  Denote this left-symmetric conformal algebra by $\mathcal{L}_7(U(\lambda), W(\partial),b)$;

\item
 \begin{align*}
x_\lambda x &= 0, \quad x_\lambda y = bx, \\
y_\lambda x&= g_1(\lambda)x,  \quad y_\lambda y = (g_1(\lambda)F(\lambda+\partial)-bF(\partial)
+bF(-\lambda))x+by,
\end{align*}
where $F(\partial)\in \mathbb{C}[\partial]$, $g_{1}(\lambda) \in \mathbb{C}[\lambda]\setminus \{0\}$,  $b\in \mathbb{C}\setminus\{0\}$ and $g_1(\lambda)\neq b$.
Denote this left-symmetric conformal algebra by $\mathcal{L}_8(g_1(\lambda), F(\partial),b)$;

\item
 \begin{align*}
x_\lambda x &= 0, \quad x_\lambda y = bx, \\
y_\lambda x&= bx,  \quad y_\lambda y = (F(-\lambda)+F(\lambda+\partial)-F(\partial)+H(\lambda)(\lambda+\partial))x+by,
\end{align*}
where $F(\lambda)$, $H(\lambda)\in \mathbb{C}[\lambda]$, and $b\in \mathbb{C}\setminus\{0\}$. Denote this left-symmetric conformal algebra by $\mathcal{L}_9(F(\lambda), H(\lambda),b)$;

\item
 \begin{align*}
x_\lambda x &= ax, \quad x_\lambda y = g_1(-\lambda-\partial)x, \\
y_\lambda x&= g_1(\lambda)x,  \quad y_\lambda y = \frac{1}{a}(g_1(\lambda)g_1(-\lambda-\partial)-dg_1(-\partial))x+dy,
\end{align*}
where $g_1(\lambda)\in \mathbb{C}[\lambda]$, $d\in\mathbb{C}$ and $a\in \mathbb{C}\setminus\{0\}$. Denote this left-symmetric conformal algebra by $\mathcal{L}_{10}(g_1(\lambda), d, a)$;

\item
 \begin{align*}
x_\lambda x &= ax, \quad x_\lambda y = (g_1(-\lambda-\partial)-\frac{1}{a}f_2(\lambda)g_1(-\partial))x+f_2(\lambda)y, \\
y_\lambda x&= g_1(\lambda)x,  \quad y_\lambda y = \frac{1}{a}g_1(\lambda)(g_1(-\lambda-\partial)-\frac{1}{a}f_2(\lambda)g_1(-\partial))x+\frac{1}{a}f_2(\lambda)g_1(\lambda)y,
\end{align*}
where $f_2(\lambda)\in \mathbb{C}[\lambda]\setminus\{0\}$, $g_1(\lambda)\in \mathbb{C}[\lambda]$, and $a\in \mathbb{C}\setminus\{0\}$. Denote this left-symmetric conformal algebra by $\mathcal{L}_{11}(f_2(\lambda), g_1(\lambda), a)$;

\item
 \begin{align*}
x_\lambda x &= ax, \quad x_\lambda y = (g_1(-\lambda-\partial)-\frac{1}{2}g_1(-\partial))x+\frac{a}{2}y, \\
y_\lambda x&= g_1(\lambda)x,  \quad y_\lambda y = (\frac{1}{a}g_1(\lambda)g_1(-\lambda-\partial)-\frac{1}{2a}g_1(\lambda)g_1(-\partial)+c)x+\frac{1}{2}g_1(\lambda)y,
\end{align*}
where $g_1(\lambda)\in \mathbb{C}[\lambda]$, and $a$, $c\in \mathbb{C}\setminus\{0\}$. Denote this left-symmetric conformal algebra by $\mathcal{L}_{12}( g_1(\lambda), a, c)$.
\end{enumerate}
\end{thm}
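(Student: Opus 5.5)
The plan is to assemble the case analysis already carried out in this section; no new computation is needed beyond bookkeeping. By the first proposition of this section, every compatible left-symmetric conformal algebra structure on a solvable rank-two free Lie conformal algebra has a $\mathbb{C}[\partial]$-basis $x,y$ with $\lambda$-products given by Eqs.~(\ref{29-thq})--(\ref{32-thq}). These products define a left-symmetric conformal algebra exactly when Eqs.~(\ref{33-thq})--(\ref{44-thq}) hold. Conversely, each listed structure is left-symmetric, since the preceding propositions verified that their solutions satisfy all nine equations. Its commutator algebra is solvable, because $\mathbb{C}[\partial]x$ is an ideal and both $\mathbb{C}[\partial]x$ and the quotient by it have abelian brackets; in rank one, $[x_\lambda x]=f_1(\lambda)x-f_1(-\lambda-\partial)x=0$ for constant $f_1$. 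So it remains to check that the solution set of Eqs.~(\ref{33-thq})--(\ref{44-thq}) is exactly the union of the twelve families.

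The first split is by $f_1$: it is $0$ or a nonzero constant $a$. If $f_1=0$, Lemma~\ref{lem-l2} gives Case (A) ($g_1=0$) or Case (B).

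\emph{Case (A).} If also $f_2=0$, Proposition~\ref{pro-p1} gives (A1), (A2), (A3), which are $\mathcal{L}_1$, $\mathcal{L}_2$, $\mathcal{L}_3$. If $f_2\neq0$, Proposition~\ref{pp-3.6} gives $\mathcal{L}_4(f_2,D)$.

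\emph{Case (B).} Here $f_2=0$, $g_2=b$ and $g_1\neq0$, and Lemma~\ref{lem-l4} restricts $A$ to $0$, or to $b$ when $b\neq 0$.
\begin{itemize}
\item $A=b=0$: Proposition~\ref{lem-l6} gives $\mathcal{L}_5$.
\item $A=0$, $b\neq0$: Proposition~\ref{pro-p3} gives (B1), (B2), i.e.\ $\mathcal{L}_6$ and $\mathcal{L}_7$.
\item $A=b\neq0$: Proposition~\ref{pro-p4} gives (B3), (B4), i.e.\ $\mathcal{L}_8$ and $\mathcal{L}_9$.
\end{itemize}

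\emph{Case $f_1=a\neq0$.} Proposition~\ref{pro-p5}(i) gives $\mathcal{L}_{10}$. Proposition~\ref{pro-p5}(ii) with $c=0$ gives $\mathcal{L}_{11}$, and with $c\neq0$ (forcing $f_2=a/2$) gives $\mathcal{L}_{12}$. For $\mathcal{L}_{12}$, substitute $f_2=a/2$ into the formulas of (ii). This yields $A=g_1(-\lambda-\partial)-\tfrac12 g_1(-\partial)$, $g_2=\tfrac12 g_1(\lambda)$ and $B=\tfrac1a g_1(\lambda)g_1(-\lambda-\partial)-\tfrac1{2a}g_1(\lambda)g_1(-\partial)+c$, matching the displayed $\lambda$-products.

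The main point needing care is not any single step but exhaustiveness: confirming that the case splits (on $f_1$, $g_1$, $f_2$, $b$, $A$, and $g_1=b$ versus $g_1\neq b$) cover all possibilities, with the parameter constraints stated in the theorem matching those of the propositions. For example, in (A2)/(A3) we need $b\neq0$; in $\mathcal{L}_6$ and $\mathcal{L}_8$ we need $g_1\neq b$; and $g_1$ may be zero in $\mathcal{L}_{10}$--$\mathcal{L}_{12}$. I would verify this with a short decision tree.
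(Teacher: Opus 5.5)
\textbf{Gap in the converse (solvability) step.} Your forward direction, which dispatches the solution set of (\ref{33-thq})--(\ref{44-thq}) to the six propositions and substitutes $f_2=a/2$ for $\mathcal{L}_{12}$, is the same as the paper's proof and is fine. The gap is in the converse step, where you show that each listed algebra has a solvable sub-adjacent Lie conformal algebra. You assert that $\mathbb{C}[\partial]x$ is an ideal of $\mathfrak{g}(R)$. However, the general products (\ref{29-thq})--(\ref{32-thq}) give
\[
[y_\lambda x]=y_\lambda x-x_{-\lambda-\partial}y=\bigl(g_1(\lambda)-A(-\lambda-\partial,\partial)\bigr)x-f_2(-\lambda-\partial)\,y ,
\]
so $\mathbb{C}[\partial]x$ is a Lie ideal only when $f_2=0$. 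Your argument therefore fails exactly for $\mathcal{L}_4$, $\mathcal{L}_{11}$ and $\mathcal{L}_{12}$. For example, $\mathcal{L}_4(1,0)$ has $x_\lambda y=y$ and all other products zero, so $[x_\lambda y]=y$. For the other nine families your reasoning is correct: there $f_2=0$ and $g_2$ is constant, so the quotient is abelian. The statement as literally written only asserts the forward direction. But since you (like the paper) include the converse, it needs a correct argument.

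\textbf{How to repair it.} For the three bad families, use a different rank-one ideal. In $\mathcal{L}_4$, put $z=D(\partial)x+y$; then $[x_\lambda z]=f_2(\lambda)z$ and $[x_\lambda x]=[z_\lambda z]=0$. In $\mathcal{L}_{11}$ and $\mathcal{L}_{12}$, put $w=g_1(-\partial)x-ay$; then $[x_\lambda w]=f_2(\lambda)w$ and $[w_\lambda w]=0$. (In $\mathcal{L}_{12}$ one has $w_\lambda w=a^2cx\neq 0$, but the bracket still vanishes.) These are the bases of Proposition~\ref{pro-1}, and this is what the paper's remark that ``the derived algebra is zero or lies in an abelian rank-one subalgebra'' amounts to.

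\textbf{A uniform alternative.} There is also an argument that avoids case checks entirely. By (\ref{29-thq})--(\ref{32-thq}), every $a\in R$ acts by left multiplication triangularly with respect to $\mathbb{C}[\partial]x\subset R$, with diagonal entries in $\mathbb{C}[\lambda]$ (no $\partial$). Hence (\ref{conf-2}) shows that every element of $\mathfrak{g}(R)'$ kills $x$ and sends $y$ into $\mathbb{C}[\lambda,\partial]x$. It follows that every element of $\mathfrak{g}(R)^{(2)}$ acts by zero under left multiplication. For two such elements $c,c'$ one has $[c_\lambda c']=c_\lambda c'-c'_{-\lambda-\partial}c=0$, so $\mathfrak{g}(R)^{(3)}=0$.
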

 \begin{proof}
 For each left-symmetric conformal algebra in the list, the derived algebra of its sub-adjacent Lie conformal algebra is either zero or contained in an abelian Lie conformal subalgebra of rank one. Hence $\mathfrak{g}(R)^{(2)}=0$. Therefore, the sub-adjacent Lie conformal algebras of these left-symmetric conformal algebras are solvable.  Then it is straightforward by Propositions \ref{pro-p1},  \ref{pp-3.6},  \ref{lem-l6}, \ref{pro-p3}, \ref{pro-p4} and \ref{pro-p5}.
\end{proof}
\begin{rmk}
The left-symmetric conformal algebra $\tilde{\mathcal{S}}(k, p(\lambda), a)$ given in \cite[Theorem 3.2]{LSW} is just $\mathcal{L}_9(F(\lambda), H(\lambda), b)$ by letting $x=M$, $y=Y$, $F(\lambda)=p(\lambda)+a$, $H(\lambda)=1$ and $b=k$. Similarly, the left-symmetric conformal algebra $\tilde{\mathcal{S}}(\psi(\partial, \lambda))$ given in \cite[Theorem 3.2]{LSW} is $\mathcal{L}_1(B(\lambda,\partial))$ by letting $x=M$, $y=Y$, and $B(\lambda,\partial)=\psi(\partial, \lambda)+\psi(\partial, -\lambda-\partial)+\lambda$.
\end{rmk}
\section{Isomorphism classes of compatible left-symmetric conformal algebras on solvable Lie conformal algebras of rank two}\mbox{}
In this section, we give a characterization of isomorphism classes of those left-symmetric conformal algebras given in Theorem \ref{thm-1} and determine which left-symmetric conformal algebras are Novikov conformal algebras.

\delete{\begin{pro}
    In Theorem \ref{thm-1}, Cases (a)--(k) are all non-simple, and Case (l) is simple, i.e. $\mathcal{B}_3( g_1(\lambda), a, C)$ is a simple left-symmetric conformal algebra for any $g_1(\lambda)\in \mathbb{C}[\lambda]$, and $a$, $C\in \mathbb{C}\setminus\{0\}$.
\end{pro}
\begin{proof}
For Cases~(a)-(c) and (e)-(j), the submodule $\mathbb{C}[\partial]x$ is a proper ideal.
For Case~(d), the submodule $\mathbb{C}[\partial]\bigl(D(\partial)x+y\bigr)$ is a proper ideal.
For Case~(k), the submodule $\mathbb{C}[\partial]\bigl(g_1(-\partial)x-ay\bigr)$ is a proper ideal.
Therefore all these cases are non-simple.
Next, we prove that $\mathcal{B}_3( g_1(\lambda), a, C)$ is simple. \delete{Its $\lambda$-brackets are
\[
\begin{aligned}
x_\lambda x &= a x,\\
x_\lambda y &= \bigl[g_1(-\lambda-\partial)-\frac{1}{2} g_1(-\partial)\bigr]x+\frac{a}{2}y,\\
y_\lambda x &= g_1(\lambda) x,\\
y_\lambda y &= \Bigl[\frac{1}{a} g_1(\lambda)g_1(-\lambda-\partial)-\frac{1}{2a}g_1(\lambda)g_1(-\partial)+C\Bigr]x+\frac{1}{2} g_1(\lambda) y,
\end{aligned}
\]
where $a\in \mathbb{C}\setminus\{0\}$, $C\in \mathbb{C}\setminus\{0\}$, and $g_1(\lambda)$ is an arbitrary polynomial.}
Assume that $I$ is a nonzero ideal of $\mathcal{B}_3( g_1(\lambda), a, C)$. Let
$v=A(\partial)x+B(\partial)y$ be a nonzero element in $I$.

Then there exist $R(\lambda,\partial),S(\lambda,\partial),U(\lambda,\partial),V(\lambda,\partial)\in\mathbb{C}[\lambda,\partial]$ such that
\[
x_\lambda v = R(\lambda,\partial)\,v,\quad y_\lambda v = S(\lambda,\partial)\,v,\quad v_\lambda x = U(\lambda,\partial)\,v,\quad v_\lambda y = V(\lambda,\partial)\,v.
\]

We assert that $B(\partial)\neq0$.
If $B(\partial)=0$, then $v=A(\partial)x,A(\partial)\neq0$.  Then \(I = \mathbb{C}[\partial]x\), and a direct computation shows that \(I\) is not an ideal of \(L\). Therefore $B(\partial)\neq0$.

Now we assert that $B(\partial)$ is a nonzero constant.
Suppose $\deg B(\partial)\ge1$.  The $y$-component of $x_\lambda v$ gives $\dfrac{a}{2}B(\lambda+\partial)=R (\lambda,\partial)B(\partial)$.
If $\beta$ is a root of $B(\partial)$, setting $\partial=\beta$ leads to $\dfrac{a}{2}B(\lambda+\beta)=0$ for all $\lambda$, hence $B(\partial)=0$,
contradiction.  Thus $B(\partial)$ is a nonzero constant; denote it simply by $B$, so we have $v=A(\partial)x+By,B\neq0$.

From $v_\lambda x = U(\lambda,\partial)v$, the $y$-component forces $U(\lambda,\partial)=0$; the $x$-component yields
\begin{align}\label{128-thq}
    a A(-\lambda) + B\,g_1(\lambda) = 0.
\end{align}
From $v_\lambda y = V(\lambda,\partial) v$, compute its $y$-component:
\[
\frac{a}{2}A(-\lambda) + \frac{B}{2}g_1(\lambda) =BV(\lambda,\partial)= 0.
\] Hence $V(\lambda,\partial)=0$.  Now the $x$-component of $v_\lambda y$ must also be zero:
\begin{align}
    A(-\lambda)\bigl[g_1(-\lambda-\partial)-\frac{1}{2} g_1(-\partial)\bigr]
+B\bigl[\frac{1}{a}g_1(\lambda)g_1(-\lambda-\partial)- \frac{1}{2a}g_1(\lambda)g_1(-\partial)+C\bigr]=0.
\end{align}
Using Eq.~(\ref{128-thq}) to replace $A(-\lambda)$ we have $BC=0$, contradiction.
Thus no nonzero proper ideal exists; $L$ is simple.
\end{proof}}
\begin{pro}\label{pro-1}
     Cases (a)--(l) of Theorem~\ref{thm-1} can be reduced to one of the following ten normal forms, by choosing some invertible change of $\mathbb{C}[\partial]$-basis
\begin{eqnarray*}
\begin{pmatrix} X \\ Y \end{pmatrix} = M \begin{pmatrix} x \\ y \end{pmatrix}.
\end{eqnarray*}
Since all transformation matrices belong to $\mathrm{GL}_2(\mathbb{C}[\partial])$, under each change of $\mathbb{C}[\partial]$-basis, the $\lambda$-products take the corresponding normal form listed below.
\small\renewcommand{\arraystretch}{1.3}%
\setlength{\tabcolsep}{4pt}%
\begin{center}
\begin{longtable}{|c|c|c|}
\hline
\textbf{Original case} & \textbf{Transformation matrix $M$} & \textbf{Normal form}\\
\hline
\endfirsthead
\multicolumn{3}{|r|}{\textit{Continued}}\\
\hline
\textbf{Original case} & \textbf{Transformation matrix $M$} & \textbf{Normal form}\\
\hline
\endhead
\hline
\endfoot
\hline
\endlastfoot
$\mathcal{L}_1(B(\lambda, \partial))$ & $\begin{pmatrix}1&0\\0&1\end{pmatrix}\;$ & $\mathcal{A}_1(B(\lambda,\partial))$\\
\hline
$\mathcal{L}_2(V(\partial),b)$ & $\begin{pmatrix}1&0\\ \frac{V(\partial)}{b^2}&\frac1b\end{pmatrix}$ & $\mathcal{A}_2$\\
\hline
$\mathcal{L}_3(U(\lambda),V(\partial),b)$ & $\begin{pmatrix}1&0\\ -\frac{U(-\partial)}{b^2}&\frac1b\end{pmatrix}$ & $\mathcal{A}_3\! \Bigl(\frac{U(-\partial)+V(\partial)}{b^2}\Bigr)$\\
\hline
$\mathcal{L}_4(f_2(\lambda), D(\partial))$ & $\begin{pmatrix}1&0\\ D(\partial)&1\end{pmatrix}$ & $\mathcal{A}_4(f_2(\lambda))$\\
\hline
$\mathcal{L}_5(g_1(\lambda), F(\partial))$ & $\begin{pmatrix}-F(\partial)&1\\1&0\end{pmatrix}$ & $\mathcal{A}_4(g_1(\lambda))$\\
\hline
$\mathcal{L}_6(g_1(\lambda), F(\partial),b)$ & $\begin{pmatrix}-\frac{F(\partial)}{b}&\frac1b\\1&0\end{pmatrix}$ & $\mathcal{B}_2\!\Bigl(\frac{g_1(\lambda)}{b}\Bigr)$\\
\hline
$\mathcal{L}_7(U(\lambda), W(\partial),b)\;(U(\lambda)\not\sim\lambda)$ & $\begin{pmatrix}1&0\\ -\frac{W(\partial)}{b^2}&\frac1b\end{pmatrix}$ & $\mathcal{A}_5\!\Bigl(\frac{U(\lambda)}{b^2}\Bigr)$\\
\hline
$\mathcal{L}_7(c\lambda, W(\partial),b)\;,c \in \mathbb{C}$ & $\begin{pmatrix}-\frac{c}{b^2}\partial-\frac{W(\partial)}{b^2}&\frac1b\\1&0\end{pmatrix}$ & $\mathcal{B}_2(1)$\\
\hline
$\mathcal{L}_8(g_1(\lambda), F(\partial),b)\;$ & $\begin{pmatrix}1&0\\ -\frac{F(\partial)}{b}&\frac1b\end{pmatrix}$ & $\mathcal{A}_6\!\Bigl(\frac{g_1(\lambda)}{b}\Bigr)$\\
\hline
$\mathcal{L}_9(F(\lambda), H(\lambda),b)$ & $\begin{pmatrix}1&0\\ -\frac{F(\partial)}{b^2}&\frac1b\end{pmatrix}$ & $\mathcal{A}_7\!\Bigl(\frac{H(\lambda)}{b^2}\Bigr)$\\
\hline
$\mathcal{L}_{10}(g_1(\lambda), 0, a);$ & $\begin{pmatrix}-\frac{g_1(-\partial)}{a}&1\\\frac1a&0\end{pmatrix}$ & $\mathcal{A}_2$\\
\hline
$\mathcal{L}_{10}(g_1(\lambda), d, a);\;(d\neq0)$ & $\begin{pmatrix}\frac1a&0\\-\frac{g_1(-\partial)}{ad}&\frac1d\end{pmatrix}$ & $\mathcal{B}_1$\\
\hline
$\mathcal{L}_{11}(f_2(\lambda), g_1(\lambda), a)$ & $\begin{pmatrix}\frac1a&0\\ g_1(-\partial)&-a\end{pmatrix}$ & $\mathcal{B}_2\!\Bigl(\frac{f_2(\lambda)}{a}\Bigr)$\\
\hline
$\mathcal{L}_{12}( g_1(\lambda), a, c)$ & $\begin{pmatrix}\frac1a&0\\-\frac{1}{a\sqrt{ac}} g_1(-\partial)&\frac{1}{\sqrt{ac}}\end{pmatrix}$ & $\mathcal{S}$\\
\hline
\end{longtable}
\end{center}
{\normalsize}
Ten normal forms are listed as follows.
\begin{enumerate}
\item[$\mathcal{A}_1(B(\lambda,\partial))$:]
$X_\lambda X=0,\quad X_\lambda Y = 0,\quad Y_\lambda X = 0,\quad Y_\lambda Y = B(\lambda,\partial)X$,
$B(\lambda,\partial)\in\mathbb{C}[\lambda,\partial]$;
\item[$\mathcal{A}_2$:]
$X_\lambda X=0,\quad X_\lambda Y = 0,\quad Y_\lambda X = 0,\quad Y_\lambda Y = Y$;
\item[$\mathcal{A}_3(V(\partial))$:]
$X_\lambda X=0,\quad X_\lambda Y = X,\quad Y_\lambda X = 0,\quad Y_\lambda Y = V(\partial)X + Y$,
  $V(\partial)\in\mathbb{C}[\partial]$;
\item[$\mathcal{A}_4(f(\lambda))$:]
$X_\lambda X=0,\quad X_\lambda Y = f(\lambda)Y,\quad Y_\lambda X = 0,\quad Y_\lambda Y = 0$,
  $0\neq f(\lambda)\in\mathbb{C}[\lambda]$;
\item[$\mathcal{A}_5(U(\lambda))$:]
$X_\lambda X=0,\quad X_\lambda Y = 0,\quad Y_\lambda X = X,\quad Y_\lambda Y = U(\lambda)X + Y$,
  $U(\lambda)\in\mathbb{C}[\lambda]$ and $U(\lambda)\not\sim\lambda$ (i.e. there is no constant $c$ with $U(\lambda)=c\lambda$);
\item[$\mathcal{A}_6(g(\lambda))$:]
$X_\lambda X=0,\quad X_\lambda Y = X,\quad Y_\lambda X= g(\lambda)X,\quad Y_\lambda Y= Y$,
  $0\neq g(\lambda)\in\mathbb{C}[\lambda]$, $g(\lambda)\neq 1$. When $g(\lambda)=1$, this coincides with $\mathcal{A}_7(0)$;
\item[$\mathcal{A}_7(H(\lambda))$:]
$X_\lambda X=0,\quad X_\lambda Y = X,\quad Y_\lambda X = X,\quad Y_\lambda Y= H(\lambda)(\lambda+\partial)X+ Y$,
  $H(\lambda)\in\mathbb{C}[\lambda]$;
\item[$\mathcal{B}_1$:]
$X_\lambda X=X,\quad X_\lambda Y= 0,\quad Y_\lambda X = 0,\quad Y_\lambda Y = Y$;
\item[$\mathcal{B}_2(f(\lambda))$:]
$X_\lambda X=X,\quad X_\lambda Y = f(\lambda)Y,\; Y_\lambda X = 0,\; Y_\lambda Y = 0$,
  $0\neq f(\lambda)\in\mathbb{C}[\lambda]$;
\item[$\mathcal{S}$:]
$X_\lambda X=X,\quad X_\lambda Y= \frac{1}{2}Y,\;Y_\lambda X= 0,\;Y_\lambda Y = X$.
\end{enumerate}
\end{pro}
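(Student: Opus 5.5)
The proof is a direct computation, carried out one row of the table at a time. Fix a case $\mathcal{L}_i$ from Theorem~\ref{thm-1} and the matrix $M$ in that row. Setting $X=m_{11}(\partial)x+m_{12}(\partial)y$ and $Y=m_{21}(\partial)x+m_{22}(\partial)y$, we expand $X_\lambda X$, $X_\lambda Y$, $Y_\lambda X$, $Y_\lambda Y$ by conformal sesquilinearity:
\begin{align*}
(p(\partial)u)_\lambda (q(\partial)v)=p(-\lambda)\,q(\lambda+\partial)\,u_\lambda v .
\end{align*}
We then rewrite the result in the basis $X,Y$ using $M^{-1}$. Every $M$ in the table has constant determinant: $1$, $1/b$, $-1$, $-1/b$, $1/(ad)$, $-1$, or $1/(a\sqrt{ac})$. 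Hence $M\in \mathrm{GL}_2(\mathbb{C}[\partial])$, $\{X,Y\}$ is again a $\mathbb{C}[\partial]$-basis, and the only task is to verify that the structure constants become the listed normal form.

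The rows fall into two types.

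\textbf{Triangular rows} ($\mathcal{L}_2,\mathcal{L}_3,\mathcal{L}_4,\mathcal{L}_7$ with $U\not\sim\lambda$, $\mathcal{L}_8,\mathcal{L}_9,\mathcal{L}_{10}$ with $d\neq 0$, $\mathcal{L}_{11},\mathcal{L}_{12}$). Here $X$ is a scalar multiple of $x$, and $Y=\beta y+\gamma(\partial)x$. The products involving $X$ are immediate. The only nontrivial check is $Y_\lambda Y$. We choose $\gamma$ to cancel the $\partial$-dependent (and in some cases $\lambda$-dependent) part of the $x$-coefficient of $Y_\lambda Y$.

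\begin{itemize}
\item \emph{Case $\mathcal{L}_3$.} With $Y=(y-U(-\partial)x/b)/b$, the term $\gamma(-\lambda)\,x_\lambda y=-U(\lambda)x/b$ kills $U(\lambda)$. What remains is $(U(-\partial)+V(\partial))/b^2$ in front of $X$.
\item \emph{Case $\mathcal{L}_9$.} The combination $F(-\lambda)+F(\lambda+\partial)-F(\partial)$ is precisely the coboundary produced by $\gamma=-F/b^2$ through the terms $\gamma(-\lambda)b$, $b\,\gamma(\lambda+\partial)$ and $-b\gamma(\partial)$.
\item \emph{Case $\mathcal{L}_4$.} One uses $X_\lambda(Dx+y)=f_2(\lambda)(Dx+y)$, which follows from $x_\lambda x=0$ and $x_\lambda y=f_2(\lambda)(D(\partial)x+y)$. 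One then checks that $Y_\lambda Y=0$ using the given form of $g_2$ and $B$.
\item \emph{Cases $\mathcal{L}_{11}$ and $\mathcal{L}_{12}$.} Here the element $g_1(-\partial)x-ay$ (up to scaling) spans a left ideal on which $X$ acts by $f_2(\lambda)/a$. In $\mathcal{L}_{12}$, the constant $c$ survives in $Y_\lambda Y$ and is normalised to $X$ by the factor $1/\sqrt{ac}$.
\end{itemize}

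\textbf{Swap rows} ($\mathcal{L}_5,\mathcal{L}_6$, $\mathcal{L}_7$ with $U=c\lambda$, $\mathcal{L}_{10}$ with $d=0$). Here the new $X$ is built from $y$ and the new $Y$ is $x$. We first verify that $X=\beta y-\gamma(\partial)x$ satisfies $X_\lambda X=\epsilon X$ with $\epsilon\in\{0,1\}$. For instance, in $\mathcal{L}_5$,
\begin{align*}
(y-Fx)_\lambda(y-Fx)=g_1(\lambda)F(\lambda+\partial)x-F(-\lambda)\cdot 0-g_1(\lambda)F(\lambda+\partial)x+0=0 ,
\end{align*}
and $X_\lambda x=g_1(\lambda)x$. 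The special case $\mathcal{L}_7(c\lambda,\cdot)$ needs the extra $-c\partial/b^2$ in $\gamma$, because $c\lambda=c(\lambda+\partial)-c\partial$ is itself a coboundary. This is why $U\sim\lambda$ is excluded from $\mathcal{A}_5$ and sent to $\mathcal{B}_2(1)$. For $\mathcal{L}_{10}$ with $d=0$, $X=(y-g_1(-\partial)x/a)$ is annihilated appropriately and $Y=x/a$ satisfies $Y_\lambda Y=Y$, giving $\mathcal{A}_2$ after renaming. The $d\neq0$ case uses $x/a$ and $(y-g_1(-\partial)x/a)/d$ to produce $\mathcal{B}_1$.

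The main obstacle is the $Y_\lambda Y$ computation in the $\mathcal{L}_{11}$ and $\mathcal{L}_{12}$ rows (and, similarly, $\mathcal{L}_8$). There the $x$-coefficient is a product expression in $g_1(\lambda)$, $g_1(-\lambda-\partial)$, $g_1(-\partial)$ and $f_2(\lambda)$, and exact cancellation must be confirmed. I would handle it by first computing $x_\lambda w$, $y_\lambda w$, $w_\lambda x$ and $w_\lambda y$ for $w=g_1(-\partial)x-ay$. Using $x_\lambda x=ax$ and the given $A$ and $g_2$, these should come out as $f_2(\lambda)w$, $\tfrac1a f_2(\lambda)g_1(\lambda)w$, $0$ and $0$, up to the constant $c$ term. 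From these, $w_\lambda w$ follows linearly, leaving only $-a^2c\,x$ in the $\mathcal{L}_{12}$ case. Finally, I would check that the ten normal forms listed are exactly the images, noting the stated coincidence $\mathcal{A}_6(1)=\mathcal{A}_7(0)$.
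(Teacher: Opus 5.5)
Your proposal is correct and is the same row-by-row verification, expanding the new products by conformal sesquilinearity, that the paper summarizes as ``straightforward by direct computations''. Two small slips need fixing: in the $\mathcal{L}_{12}$ row the surviving term is $w_\lambda w=+a^2c\,x$, not $-a^2c\,x$, since it comes from $(-ay)_\lambda(-ay)=a^2\,y_\lambda y$, and this sign is what gives $Y_\lambda Y=X$ after scaling by $-1/(a\sqrt{ac})$; also, your list of determinants omits $-1/a$, which is the determinant for the $\mathcal{L}_{10}(g_1(\lambda),0,a)$ row.
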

\begin{proof}
It is straightforward by direct computations.
\end{proof}
{\begin{pro}\label{pp1}
    In Proposition \ref{pro-1}, $\mathcal{S}$ is simple and other normal forms are non-simple. Therefore, $\mathcal{S}$ is not isomorphic to other normal forms.
\end{pro}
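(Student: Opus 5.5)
The plan has two parts. First, for each of the nine non-$\mathcal{S}$ normal forms, exhibit an explicit proper nonzero ideal. Second, show directly that $\mathcal{S}$ has no proper nonzero ideal. The final assertion then follows at once, since simplicity is preserved under isomorphism.

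For the non-simple part, the submodule $\mathbb{C}[\partial]X$ is a proper ideal in most cases.
\begin{itemize}
\item In $\mathcal{A}_1$, $\mathcal{A}_3$, $\mathcal{A}_5$, $\mathcal{A}_6$ and $\mathcal{A}_7$, every $\lambda$-product involving $X$ on either side lies in $\mathbb{C}[\partial]X[\lambda]$. This can be read off from the tables: in each of these forms $X_\lambda Y$ and $Y_\lambda X$ are multiples of $X$.
\item In $\mathcal{B}_1$ the same holds for $\mathbb{C}[\partial]X$, and alternatively for $\mathbb{C}[\partial]Y$.
\item In $\mathcal{A}_2$, $\mathcal{A}_4$ and $\mathcal{B}_2$ the submodule $\mathbb{C}[\partial]Y$ works. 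Indeed $X_\lambda Y$, $Y_\lambda Y$ and $Y_\lambda X$ all lie in $\mathbb{C}[\partial]Y[\lambda]$.
\end{itemize}
All of these submodules are proper and nonzero, because $R$ is free of rank two.

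For the simplicity of $\mathcal{S}$, let $I\neq 0$ be an ideal and take $0\neq v=P(\partial)X+Q(\partial)Y\in I$.

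\textbf{Case $Q\neq 0$.} Compute
\[
Y_\lambda v=P(\lambda+\partial)\cdot 0+Q(\lambda+\partial)X=Q(\lambda+\partial)X\in I[\lambda].
\]
Taking the coefficient of the highest power of $\lambda$ gives a nonzero constant multiple of $X$ in $I$, so $X\in I$.

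\textbf{Case $Q=0$.} Then $v=P(\partial)X$, and $X_\lambda v=P(\lambda+\partial)X$. Extracting the top $\lambda$-coefficient again gives $X\in I$.

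Thus in both cases $X\in I$. Now
\[
X_\lambda X = X \quad\text{and}\quad X_\lambda Y=\tfrac12 Y.
\]
The second gives $Y\in I$, since $\tfrac12 Y\in I[\lambda]$ is a constant polynomial in $\lambda$. Hence $I=\mathbb{C}[\partial]X+\mathbb{C}[\partial]Y=\mathcal{S}$, so $\mathcal{S}$ is simple. Nontriviality of $\mathcal{S}$ is clear, for example from $X_\lambda X=X\neq 0$.

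The main point to check carefully is the coefficient-extraction step. Ideals are $\mathbb{C}[\partial]$-submodules, and $u_\lambda v\in I[\lambda]$ means that every $\lambda$-coefficient lies in $I$. For $Q(\lambda+\partial)X$ with $\deg Q=n$, the $\lambda^n$-coefficient is the leading coefficient of $Q$ times $X$. So it is a nonzero scalar multiple of $X$, as needed.

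Finally, an isomorphism maps ideals to ideals bijectively. Therefore the simple algebra $\mathcal{S}$ cannot be isomorphic to any of the other, non-simple, normal forms.
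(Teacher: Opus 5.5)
Your proposal is correct and follows essentially the same route as the paper: you exhibit $\mathbb{C}[\partial]X$ or $\mathbb{C}[\partial]Y$ as a proper ideal in each non-$\mathcal{S}$ normal form, then show that any nonzero ideal of $\mathcal{S}$ contains $X$ by extracting the top $\lambda$-coefficient, and hence contains $Y$ via $X_\lambda Y=\frac12 Y$. The differences are only cosmetic: you split on $Q\neq 0$ and use $X_\lambda v$ where the paper splits on $P\neq 0$ and uses $v_\lambda X$, and for $\mathcal{A}_2$ you take $\mathbb{C}[\partial]Y$ where the paper takes $\mathbb{C}[\partial]X$, though both are ideals there.
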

\begin{proof}
For $\mathcal{A}_1(B(\lambda,\partial))$, $\mathcal{A}_2$, $\mathcal{A}_3(V(\partial))$, $\mathcal{A}_5(U(\lambda))$, $\mathcal{A}_6(g(\lambda))$, $\mathcal{A}_7(H(\lambda))$ and $\mathcal{B}_1$, the submodule $\mathbb{C}[\partial]X$ is a proper ideal.
For $\mathcal{A}_4(f(\lambda))$ and $\mathcal{B}_2(f(\lambda))$ , the submodule $\mathbb{C}[\partial]Y$ is a proper ideal.
Therefore all these cases are non-simple.  Let $I$ be a nonzero ideal of $\mathcal{S}$ and $f(\partial)X+g(\partial)Y$ be a nonzero element in $I$. If $f(\partial)\neq 0$, by $(f(\partial)X+g(\partial)Y)_\lambda X=f(-\lambda)X$ and considering the coefficient of the highest degree in $\lambda$, we have $X\in I$.  Similarly, if $f(\partial)=0$, then $g(\partial)\neq 0$. By $Y_\lambda(f(\partial)X+g(\partial)Y)=g(\lambda+\partial)X$ and considering the coefficient of the highest degree in $\lambda$, we also have $X\in I$. Then by $X_\lambda Y= \frac{1}{2}Y$, we have $Y\in I$. Since $I$ is a $\mathbb{C}[\partial]$-submodule of $\mathcal{S}$, we have $I=\mathcal{S}$. Therefore, $\mathcal{S}$ is simple.
\delete{Next, we prove that $\mathcal{S}$ is simple.
Assume that $I$ is a nonzero ideal of $\mathcal{S}(c)$. Let
$v=A(\partial)x+B(\partial)y$ be a nonzero element in $I$.
Then there exist $R(\lambda,\partial),S(\lambda,\partial),U(\lambda,\partial),V(\lambda,\partial)\in\mathbb{C}[\lambda,\partial]$ such that
\[
x_\lambda v = R(\lambda,\partial)\,v,\quad y_\lambda v = S(\lambda,\partial)\,v,\quad v_\lambda x = U(\lambda,\partial)\,v,\quad v_\lambda y = V(\lambda,\partial)\,v.
\]
If $A(\partial)=0$, then $v=B(\partial)y,B(\partial)\neq0$.   From $x_\lambda v=\frac{1}{2}B(\lambda+\partial)y\in I$  and $y_\lambda v=cB(\lambda+\partial)x\in I$ we have $x,y\in I$ i.e. $I=\mathcal{S}(c)$.
If $A(\partial)\neq0$, by $v_\lambda x=A(-\lambda)x\in I$ and $x_\lambda y=\frac{1}{2}y$ we have $x,y\in I$ i.e. $I=\mathcal{S}(c)$. Thus no nonzero proper ideal exists; $\mathcal{S}(c)$ is simple.}
\end{proof}}
\delete{\begin{thm}
Let $L=\mathbb{C}[\partial]x\oplus\mathbb{C}[\partial]y$ be a non-abelian two-dimensional left-symmetric conformal algebra over $\mathbb{C}$ whose $\lambda$-brackets are taken from \textbf{Cases 1Â¨C12 in Theorem 3.11}. Then $L$ is isomorphic to exactly one of the following normal forms; these normal forms are pairwise non-isomorphic. The abelian algebra (all $\lambda$-brackets zero) constitutes a separate trivial class.
\noindent\textbf{Class A: there exists a basis with $x_\lambda x = 0$}
\begin{enumerate}
\item[$\mathcal{A}_1(B)$:]
  $x_\lambda y = 0,\quad y_\lambda x = 0,\quad y_\lambda y = B(\lambda,\partial)x$,
  $0\neq B(\lambda,\partial)\in\mathbb{C}[\lambda,\partial]$.
\item[$\mathcal{A}_2$:]
  $x_\lambda y = 0,\quad y_\lambda x = 0,\quad y_\lambda y = y$.
\item[$\mathcal{A}_3(V)$:]
  $x_\lambda y = x,\quad y_\lambda x = 0,\quad y_\lambda y = V(\partial)x + y$,
  $V(\partial)\in\mathbb{C}[\partial]$.
\item[$\mathcal{A}_4(f)$:]
  $x_\lambda y = f(\lambda)y,\quad y_\lambda x = 0,\quad y_\lambda y = 0$,
  $0\neq f(\lambda)\in\mathbb{C}[\lambda]$.
\item[$\mathcal{A}_5(U)$:]
  $x_\lambda y = 0,\quad y_\lambda x = x,\quad y_\lambda y = U(\lambda)x + y$,
  $0\neq U(\lambda)\in\mathbb{C}[\lambda]$ and $U(\lambda)\not\sim\lambda$ (i.e. there is no constant $c$ with $U(\lambda)=c\lambda$).
\item[$\mathcal{A}_6(g)$:]
  $x_\lambda y = x,\quad y_\lambda x = g(\lambda)x,\quad y_\lambda y = y$,
  $0\neq g(\lambda)\in\mathbb{C}[\lambda]$, $g(\lambda)\neq 1$. When $g(\lambda)=1$ this coincides with $\mathcal{A}_7(0)$.
\item[$\mathcal{A}_7(H)$:]
  $x_\lambda y = x,\quad y_\lambda x = x,\quad y_\lambda y = H(\lambda)(\lambda+\partial)x + y$,
  $H(\lambda)\in\mathbb{C}[\lambda]$.
\end{enumerate}
\noindent\textbf{Class B: there exists a basis with $x_\lambda x = x$}
\begin{enumerate}
\item[$\mathcal{B}_1$:]
  $x_\lambda y = 0,\quad y_\lambda x = 0,\quad y_\lambda y = y$.
\item[$\mathcal{B}_2(f)$:]
  $ x_\lambda y = f(\lambda)y,\; y_\lambda x = 0,\; y_\lambda y = 0$,
  $0\neq f(\lambda)\in\mathbb{C}[\lambda]$.
  \item[$\mathcal{S}(g,C)$:]
  \[
  \begin{aligned}
  x_\lambda y &= \bigl[g(-\lambda-\partial)-\frac{1}{2} g(-\partial)\bigr]x +\frac{1}{2}  y,\\
  y_\lambda x &= g(\lambda)x,\\
  y_\lambda y &= \bigl[g(\lambda)g(-\lambda-\partial)-\frac{1}{2}  g(\lambda)g(-\partial)+C\bigr]x + \frac{1}{2}  g(\lambda)y,
  \end{aligned}
  \]
  where $g(\lambda)\in\mathbb{C}[\lambda]$, $C\in \mathbb{C}\setminus\{0\}$.
\end{enumerate}
\end{thm}
\begin{proof}
    The proof of this theorem is given by the following content.
\end{proof}

\begin{lem}
Every non-abelian algebra among Cases~1Â¨C12 of Theorem~3.11 can be transformed, by an invertible $\mathbb{C}[\partial]$-module basis change
\[
\begin{pmatrix} X \\ Y \end{pmatrix} = M \begin{pmatrix} x \\ y \end{pmatrix},
\]
into a normal form appearing in Theorem~3.13.   The correspondence is given by the following table (all transformation matrices belong to $\mathrm{GL}_2(\mathbb{C}[\partial])$, hence each transformation is an isomorphism of $\mathbb{C}[\partial]$-modules):

\small\renewcommand{\arraystretch}{1.3}%
\setlength{\tabcolsep}{4pt}%
\begin{center}
\begin{longtable}{|c|c|c|}
\hline
\textbf{Original case} & \textbf{Transformation matrix $M$} & \textbf{Normal form}\\
\hline
\endfirsthead
\multicolumn{3}{|r|}{\textit{Continued}}\\
\hline
\textbf{Original case} & \textbf{Transformation matrix $M$} & \textbf{Normal form}\\
\hline
\endhead
\hline
\endfoot
\hline
\endlastfoot
$1\;(B(\lambda,\partial)\neq0)$ & $\begin{pmatrix}1&0\\0&1\end{pmatrix}\;$ & $\mathcal{A}_1(B(\lambda,\partial))$\\
\hline
$2$ & $\begin{pmatrix}1&0\\ \frac{V(\partial)}{b^2}&\frac1b\end{pmatrix}$ & $\mathcal{A}_2$\\
\hline
$3$ & $\begin{pmatrix}1&0\\ -\frac{U(-\partial)}{b^2}&\frac1b\end{pmatrix}$ & $\mathcal{A}_3\! \Bigl(\frac{U(-\partial)+V(\partial)}{b^2}\Bigr)$\\
\hline
$4$ & $\begin{pmatrix}1&0\\ D(\partial)&1\end{pmatrix}$ & $\mathcal{A}_4(f_2(\lambda))$\\
\hline
$5$ & $\begin{pmatrix}-F(\partial)&1\\1&0\end{pmatrix}$ & $\mathcal{A}_4(g_1(\lambda))$\\
\hline
$6$ & $\begin{pmatrix}-\frac{F(\partial)}{b}&\frac1b\\1&0\end{pmatrix}$ & $\mathcal{B}_2\!\Bigl(\frac{g_1(\lambda)}{b}\Bigr)$\\
\hline
$7\;(U(\lambda)\not\sim\lambda,U(\lambda)\neq0)$ & $\begin{pmatrix}1&0\\ -\frac{W(\partial)}{b^2}&\frac1b\end{pmatrix}$ & $\mathcal{A}_5\!\Bigl(\frac{U(\lambda)}{b^2}\Bigr)$\\
\hline
$7\;(U=c\lambda), c \in \mathbb{C}$ & $\begin{pmatrix}-\frac{c}{b^2}\partial-\frac{W(\partial)}{b^2}&\frac1b\\1&0\end{pmatrix}$ & $\mathcal{B}_2(1)$\\
\hline
$8\;$ & $\begin{pmatrix}1&0\\ -\frac{F(\partial)}{b}&\frac1b\end{pmatrix}$ & $\mathcal{A}_6\!\Bigl(\frac{g_1(\lambda)}{b}\Bigr)$\\
\hline
$9$ & $\begin{pmatrix}1&0\\ -\frac{F(\partial)}{b^2}&\frac1b\end{pmatrix}$ & $\mathcal{A}_7\!\Bigl(\frac{H(\lambda)}{b^2}\Bigr)$\\
\hline
$10\;(G=0)$ & $\begin{pmatrix}-\frac{g_1(-\partial)}{a}&1\\\frac1a&0\end{pmatrix}$ & $\mathcal{A}_2$\\
\hline
$10\;(G\neq0)$ & $\begin{pmatrix}\frac1a&0\\-\frac{g_1(-\partial)}{aG}&\frac1G\end{pmatrix}$ & $\mathcal{B}_1$\\
\hline
$11$ & $\begin{pmatrix}\frac1a&0\\ g_1(-\partial)&-a\end{pmatrix}$ & $\mathcal{B}_2\!\Bigl(\frac{f_2(\lambda)}{a}\Bigr)$\\
\hline
$12$ & $\begin{pmatrix}\frac1a&0\\0&1\end{pmatrix}$ & $\mathcal{S}(g_1,aC)$\\
\hline
\end{longtable}
\end{center}
{\normalsize}
\end{lem}
\begin{proof}
    For each row the determinant of the given matrix is a nonzero constant, hence the transformation is invertible.  Expanding the $\lambda$-brackets in the new basis $\{X,Y\}$ and using the original bracket expressions, one verifies directly that they coincide with the normal form listed in the right column.
\end{proof}
\begin{lem}
    For each non-abelian normal form in Theorem 3.13 we determine all nonzero proper ideals.  Let $I = \mathbb{C}[\partial]v$ with $v = E(\partial)x + F(\partial)y \neq 0$ .  Solving the ideal conditions $x_\lambda v = R(\lambda,\partial) v$, $y_\lambda v = S(\lambda,\partial) v$ yields the following data.
\vspace{1mm}
\begin{center}
\begin{longtable}{|c|c|c|}
\hline
Normal form & Number of ideals &  Ideals\\
\hline
\endfirsthead
\multicolumn{3}{|r|}{Continued.}\\
\hline
Normal form & Number of ideals & Generators of ideals\\
\hline
\endhead
\hline
\endfoot
\hline
\endlastfoot
$\mathcal{A}_1(B)$ & 1 & $I_x=\mathbb{C}[\partial]x$\\
\hline
$\mathcal{A}_2$ & 2 & $I_x=\mathbb{C}[\partial]x,I_y=\mathbb{C}[\partial]y$\\
\hline
$\mathcal{A}_3(V)$ & 1 & $I_x=\mathbb{C}[\partial]x$\\
\hline
$\mathcal{A}_4(f)$ & 1 & $I_y=\mathbb{C}[\partial]y$\\
\hline
$\mathcal{A}_5(U)$ & 1 & $I_x=\mathbb{C}[\partial]x$\\
\hline
$\mathcal{A}_6(g)$ & 1 & $I_x=\mathbb{C}[\partial]x$\\
\hline
$\mathcal{A}_7(H)$ & 1 & $I_x=\mathbb{C}[\partial]x$\\
\hline
$\mathcal{B}_1$ & 2 & $I_x=\mathbb{C}[\partial]x,I_y=\mathbb{C}[\partial]y$\\
\hline
$\mathcal{B}_2(f)$ & 1 & $I_y=\mathbb{C}[\partial]y$\\
\hline
$\mathcal{S}(g,C)$ & 0 & -\\
\hline
\end{longtable}
\end{center}
\end{lem}
\begin{proof}
We take the computation of ideals for \(\mathcal{A}_2\) and \(\mathcal{B}_2(f)\) as examples; the method for the other cases is the same.

\textbf{Computation of the ideals for \(\mathcal{A}_2\):}
Let \(I = \mathbb{C}[\partial]v\) with \(v = E(\partial)x + F(\partial)y \neq 0\) .
There exist \(R(\lambda,\partial),S(\lambda,\partial) \in \mathbb{C}[\lambda,\partial]\) such that
\[
x_\lambda v = R(\lambda,\partial)\,v,\qquad y_\lambda v = S(\lambda,\partial)\,v.
\]
 By computing  \(y_\lambda v\) we have
\[
y_\lambda v = F(\partial+\lambda)\,y=S(\lambda,\partial)E(\partial)x+S(\lambda,\partial)F(\partial)y.
\]
Comparing coefficients of \(x\) and \(y\) gives
\begin{eqnarray}
   \label{130-thq}&&S(\lambda,\partial)E(\partial)=0,\\
    \label{131-thq}&&F(\lambda+\partial)=S(\lambda,\partial)F(\partial).
\end{eqnarray}
If $E(\partial)\neq0$, then Eq.~(\ref{130-thq}) forces $S(\lambda,\partial)=0$, and then by Eq.~(\ref{131-thq}) we have $F(\lambda+\partial)=0$ i.e. $F(\partial)=0$. Then $I = \mathbb{C}[\partial]v=\mathbb{C}[\partial]x=I_x$. A direct computation shows that \(I_x\) is indeed a proper ideal of \(\mathcal{A}_2\).
If $E(\partial)=0$,  then $I = \mathbb{C}[\partial] y = I_y$.  One checks directly that $I_y$ is a proper ideal of $\mathcal{A}_2$.

\textbf{Computation of the ideals for  \(\mathcal{B}_2(f)\):}
Let \(I = \mathbb{C}[\partial]v\) with \(v = E(\partial)x + F(\partial)y \neq 0\) .
There exist \(R(\lambda,\partial),S(\lambda,\partial)\in\mathbb{C}[\lambda,\partial]\) such that
\[
x_\lambda v = R(\lambda,\partial) v,\qquad y_\lambda v = S(\lambda,\partial) v.
\]
By computing \(x_\lambda v\) we have
\[
x_\lambda v = E(\partial+\lambda)x + F(\partial+\lambda)f(\lambda)y = R(\lambda,\partial) E(\partial) x + R(\lambda,\partial) F(\partial) y.
\]
Comparing coefficients yields
\begin{eqnarray}
     \label{132-thq}&&E(\partial+\lambda) = R(\lambda,\partial) E(\partial),\\
    \label{133-thq}&&F(\partial+\lambda)f(\lambda) = R(\lambda,\partial) F(\partial).
\end{eqnarray}
If $F(\partial)=0$, then $I=\mathbb{C}[\partial]x$.  A direct verification shows that $I$ is not an ideal of $\mathcal{B}_2(f)$; hence $F(\partial)\neq0$.  From Eq.~(\ref{133-thq}) we obtain that $F(\partial)$ is a nonzero constant, which we denote by $F\neq0$.  Then from Eqs.~(\ref{132-thq})--(\ref{133-thq}) we have $R(\lambda,\partial)=f(\lambda)$ and $E(\lambda+\partial)=f(\lambda)E(\partial)$.  If $E(\partial)=0$, then $I=\mathbb{C}[\partial]y$, and a direct verification shows that $I$ is an ideal of $\mathcal{B}_2(f)$.  If $E(\partial)\neq0$, then from $E(\lambda+\partial)=f(\lambda)E(\partial)$ we obtain that $E(\partial)$ is a nonzero constant, which we denote by $E\neq0$.  Thus $v=Ex+Fy$ with $EF\neq0$, so $I=\mathbb{C}[\partial]v$, and a direct verification shows that $I$ is not an ideal of $\mathcal{B}_2(f)$.

From the above discussion, we obtain:
\begin{itemize}
\item $\mathcal{A}_1(B)$:  $y_\lambda v = F(\partial+\lambda)B(\lambda,\partial) x=S(\lambda,\partial)E(\partial)x+S(\lambda,\partial)F(\partial)y$ $\Rightarrow$ $F(\partial)=0$, only $I_x$.
\item $\mathcal{A}_3(V)$,$\mathcal{A}_6(g)$,$\mathcal{A}_7(H)$: $x_\lambda v = F(\lambda+\partial) x=R(\lambda,\partial)E(\partial)x+R(\lambda,\partial)F(\partial)y$ $\Rightarrow$ $F(\partial)=0$, only $I_x$.
\item $\mathcal{A}_4(f)$: $x_\lambda v = F(\lambda+\partial) f (\lambda)y=R(\lambda,\partial)E(\partial)x+R(\lambda,\partial)F(\partial)y$ $\Rightarrow$ $R(\lambda,\partial)\neq0,E(\partial)=0$, only $I_y$.
\item $\mathcal{A}_5(U)$: $y_\lambda v = [E(\lambda+\partial)+F(\lambda+\partial)U(\lambda)] x + F(\lambda+\partial) y=S(\lambda,\partial)E(\partial)x+S(\lambda,\partial)F(\partial)y.$ If $F(\partial) \neq 0$, then $F(\partial)$ must be a nonzero constant, say $F$, then $S(\lambda,\partial)=1$, $E(\lambda+\partial)-E(\partial)=-FU(\lambda)$, which contradicts $U(\lambda)\neq 0$ and $U(\lambda)\not\sim\lambda$ .  $\Rightarrow$ $F(\partial)=0$, only $I_x$.
\item $\mathcal{B}_1$: from $x_\lambda v$ and $y_\lambda v$, comparing the coefficients of $x$ and $y$ yields either $E(\partial)=0,\;F(\partial)\neq0$ or $F(\partial)=0,\;E(\partial)\neq0$. $\Rightarrow$ $I_x$ and $I_y$.
\item $\mathcal{S}(g,C)$: already proved to have no proper ideals.
\end{itemize}
\end{proof}
\begin{lem}
    $\mathcal{A}_2$ and $\mathcal{B}_1$ in Theorem~3.13 are not isomorphic.
\end{lem}
\begin{proof}
    Suppose there exists an isomorphism $\varphi: \mathcal{A}_2 \to \mathcal{B}_1$ of left-symmetric conformal algebras, and write
\[
\varphi(x) = P(\partial)x' + Q(\partial)y',\qquad \varphi(y) = R(\partial)x' + S(\partial)y',
\]
where $P(\partial),Q(\partial),R(\partial),S(\partial) \in \mathbb{C}[\partial]$ and the determinant $\Delta = P(\partial)S(\partial) - Q(\partial)R(\partial)$ is a nonzero constant (hence $\Delta \in \mathbb{C}^\times$). Hereafter we use \(\{x,y\}\) for a basis of the domain algebra and \(\{x',y'\}\) for a basis of the codomain.
In $\mathcal{A}_2$ we have $x_\lambda x = 0$, therefore $\varphi(x)_\lambda \varphi(x) = 0$.  Expanding this in $\mathcal{B}_1$ we have
\[
\begin{aligned}
\varphi(x)_\lambda \varphi(x) &= (P(\partial) x' + Q(\partial) y')_\lambda (P(\partial) x' + Q(\partial) y') \\
&= P(-\lambda)P(\partial+\lambda)x' + Q(-\lambda)Q(\partial+\lambda)y'=0.
\end{aligned}
\]
Then we have
\[
P(-\lambda)P(\partial+\lambda) = 0,\qquad Q(-\lambda)Q(\partial+\lambda) = 0
\]
i.e. $P(\partial)=Q(\partial)=0$. Consequently
\[
\Delta = P(\partial)S(\partial) - Q(\partial)R(\partial)= 0,
\]
which contradicts $\Delta \neq 0$.  Therefore $\mathcal{A}_2 \not\cong \mathcal{B}_1$.
\end{proof}

\delete{\begin{lem}
    In Theorem 3.13, the algebras $\mathcal{A}_1(B)$, $\mathcal{A}_3(V)$, $\mathcal{A}_4(f)$, $\mathcal{A}_5(U)$, $\mathcal{A}_6(g)$, $\mathcal{A}_7(H)$, and $\mathcal{B}_2(f)$ are mutually non-isomorphic.
\end{lem}
\begin{proof}
    We illustrate the method with the proofs that $\mathcal{B}_2(f)\not\cong\mathcal{A}_5(U)$ and \(\mathcal{A}_6(g) \not\cong \mathcal{A}_7(H)\); the other cases are analogous.

$\mathcal{B}_2(f)\not\cong\mathcal{A}_5(U)$:

Assume there exists an isomorphism \(\varphi: \mathcal{A}_5(U) \to \mathcal{B}_2(f)\) of left-symmetric conformal algebras,  and write
\[
\varphi(x) = P(\partial)x' + Q(\partial)y',\qquad \varphi(y) = R(\partial)x' + S(\partial)y',
\]
with \(P(\partial),Q(\partial),R(\partial),S(\partial) \in \mathbb{C}[\partial]\) and \(\Delta = P(\partial)S(\partial) - Q(\partial)R(\partial) \neq 0\).

In \(\mathcal{A}_5(U)\) we have \(x_\lambda x = 0\).  Computing in \(\mathcal{B}_2(f)\) gives
\[
\varphi(x)_\lambda\varphi(x) = P(-\lambda)P(\partial+\lambda)x' + P(-\lambda)Q(\partial+\lambda)f(\lambda)y' = 0,
\]
hence \(P(\partial) = 0\).  Thus \(\varphi(x) = Q(\partial)y'\) with \(Q(\partial) \neq 0\).

In \(\mathcal{A}_5(U)\) we have \(y_\lambda x = x\).  Computing \(\varphi(y_\lambda x)\) and \(\varphi(y)_\lambda\varphi(x)\):
\[
\varphi(y_\lambda x) =\varphi(x)=   Q(\partial)y',
\]
\[
\varphi(y)_\lambda\varphi(x) =   R(-\lambda)Q(\partial+\lambda)f(\lambda)y'.
\]
Comparing the coefficients of  \(y'\) yields
\begin{eqnarray}
 \label{134-thq}&&Q(\partial)=R(-\lambda)Q(\partial+\lambda)f(\lambda) .
\end{eqnarray}
Assume \(\deg Q(\partial) \ge 1\).  Since \(Q(\partial) \neq 0\), there exists \(\alpha \in \mathbb{C}\) such that \(Q(\alpha) = 0\).  Setting \(\partial = \alpha\) in Eq.~(\ref{134-thq})  we obtain
\[
0 = Q(\alpha) = R(-\lambda)\,Q(\alpha+\lambda)\,f(\lambda) \qquad \text{for all } \lambda \in \mathbb{C}.
\]
 \(R(-\lambda)\)  is a nonzero polynomial (otherwise the right-hand side of Eq.~(\ref{134-thq})  would be identically zero while the left-hand side is not), so the product \(R(-\lambda)f(\lambda)\) is a nonzero polynomial in \(\lambda\); it has only finitely many zeros.  Consequently there exists \(\lambda_0 \in \mathbb{C}\) such that \(R(-\lambda_0)f(\lambda_0) \neq 0\).  Substituting this value into the identity above gives
\[
0 = R(-\lambda_0)\,Q(\alpha+\lambda_0)\,f(\lambda_0) \;\Longrightarrow\; Q(\alpha+\lambda_0) = 0.
\]
Thus \(\alpha+\lambda_0\) is another root of \(Q(\partial)\).  Because there are infinitely many \(\lambda\) with \(R(-\lambda)f(\lambda)\neq0\), we obtain infinitely many distinct roots of \(Q(\partial)\), forcing \(Q(\partial) = 0\), which contradicts \(Q(\partial)\neq0\).  Hence \(\deg Q(\partial) = 0\), i.e. \(Q(\partial)=Q\in \mathbb{C}\setminus\{0\}\) is a nonzero constant.

Substituting the constant \(Q(\partial)=Q\) into Eq.~(\ref{134-thq}) and cancelling \(Q\) we get
\[
1 = R(-\lambda)\,f(\lambda).
\]
The left-hand side is the constant \(1\); therefore both \(R(-\lambda)\) and \(f(\lambda)\) must be nonzero constants.  Write \(R(\partial)=R\in \mathbb{C}\setminus\{0\}) and \(f(\lambda)=f_0\in \mathbb{C}\setminus\{0\}); then \(R f_0 = 1\).

In \(\mathcal{A}_5(U)\) we have \(y_\lambda y = U(\lambda)x + y\).  Applying \(\varphi\) to both sides gives
\[
\varphi(y_\lambda y)
 = R\,x' +  [U(\lambda)\,Q+S(\partial)]\,y'.
\]
On the other hand, compute \(\varphi(y)_\lambda\varphi(y)\) in \(\mathcal{B}_2(f)\) :
\[
\begin{aligned}
\varphi(y)_\lambda\varphi(y)
&= R^2\,x' + R\,S(\partial+\lambda)\,f_0\,y'.
\end{aligned}
\]
Comparing the coefficients of \(y'\) gives
\begin{align*}
     S(\partial+\lambda) = U(\lambda)\,Q + S(\partial).
\end{align*}
i.e.
\begin{align}
    \label{135-thq}S(\partial+\lambda) - S(\partial) = U(\lambda)\,Q.
\end{align}
Since $U(\lambda)Q \neq 0$, $S(\partial)$ is not a constant.  Eq.~(\ref{135-thq}) gives $\deg S(\partial) \le 1$, so $\deg S(\partial)=1$.
Write \(S(\partial) = \alpha\partial + \beta\).  Then \(S(\partial+\lambda)-S(\partial) = \alpha\lambda\), so \(\alpha\lambda = U(\lambda)Q\), i.e. \(U(\lambda) = \dfrac{\alpha}{Q}\,\lambda\), contradicting the condition \(U\not\sim\lambda\) in \(\mathcal{A}_5(U)\).
Consequently \(\mathcal{B}_2(f) \not\cong \mathcal{A}_5(U)\).  \\
\(\mathcal{A}_6(g) \not\cong \mathcal{A}_7(H)\) (\(\varphi: \mathcal{A}_6(g) \to \mathcal{A}_7(H))\):\\
From \(x_\lambda y = x\) we have
\[
\varphi(x) = \varphi(x)_\lambda \varphi(y) = \bigl[P(-\lambda)S(\partial+\lambda) + Q(-\lambda)R(\partial+\lambda) + Q(-\lambda)S(\partial+\lambda)H(\lambda)(\lambda+\partial)\bigr]x' + Q(-\lambda)S(\partial+\lambda)y'.
\]
Comparing this with \(\varphi(x)=P(\partial)x'+Q(\partial)y'\) gives the two equations
\begin{eqnarray}
 \label{136}&&Q(-\lambda)S(\partial+\lambda) = Q(\partial), \\
 \label{137}&&P(-\lambda)S(\partial+\lambda) + Q(-\lambda)R(\partial+\lambda) + Q(-\lambda)S(\partial+\lambda)H(\lambda)(\lambda+\partial) = P(\partial).
\end{eqnarray}

Suppose first that \(Q(\partial)=0\).  Then Eq.~(\ref{137}) reduces to \(P(-\lambda)S(\partial+\lambda)=P(\partial)\).  Setting \(\lambda=0\) yields \(P(0)S(\partial)=P(\partial)\).  Because \(P(\partial)\neq0\) (otherwise \(\varphi(x)=0\)), we must have \(P(0)\neq0\) and consequently \(S(\partial)=P(\partial)/P(0)\).  Substituting this back into \(P(-\lambda)S(\partial+\lambda)=P(\partial)\) gives \(P(-\lambda)P(\partial+\lambda)=P(0)P(\partial)\), which forces \(P(\partial)\) to be a nonzero constant \(P\) and thus \(S(\partial)=1\).
Suppose now that \(Q(\partial)\neq0\).  In Eq.~(\ref{136}) put \(\lambda=0\) to obtain \(Q(0)S(\partial)=Q(\partial)\).  Since \(Q(\partial)\neq0\), \(Q(0)\neq0\) and we can write \(S(\partial)=Q(\partial)/Q(0)\).  Substituting this expression into Eq.~(\ref{136}) gives \(Q(-\lambda)Q(\partial+\lambda)=Q(0)Q(\partial)\).  Taking \(\partial=0\) yields \(Q(-\lambda)Q(\lambda)=Q(0)^2\), which forces \(\deg Q(\partial)=0\); thus \(Q(\partial)\) is a nonzero constant, \(S(\partial)=1\).
Therefore, $Q(\partial)$ is a constant, denote it by $Q$, and $S(\partial)=1$.  Now \(\varphi(x)=P(\partial)x'+Qy'\) and \(\varphi(y)=R(\partial)x'+y'\) .  Applying \(y_\lambda x = g(\lambda)x\) we obtain
\[
\varphi(y_\lambda x) =g(\lambda)[P(\partial)x'+Qy']=\varphi(y)_\lambda\varphi(x)=[R(-\lambda)Q+P(\lambda+\partial)+QH(\lambda)(\lambda+\partial)]x'+Qy',
\]
which again forces \(g(\lambda)=1\), contradicting \(g(\lambda)\neq1\).

In the following, all contradictions are derived from the condition \(\Delta = P(\partial)S(\partial) - Q(\partial)R(\partial) \neq 0\).

$\mathcal{B}_2(f) \not\cong \mathcal{A}_1(B), \mathcal{A}_3(V), \mathcal{A}_4(f), \mathcal{A}_6(g), \mathcal{A}_7(H)$:

Let \(\varphi: A \to \mathcal{B}_2(f)\) where \(A\) is any of the A-class algebras \(\mathcal{A}_1(B)\), \(\mathcal{A}_3(V)\), \(\mathcal{A}_4(f)\), \(\mathcal{A}_6(g)\), \(\mathcal{A}_7(H)\).  All these algebras satisfy \(x_\lambda x = 0\).  In \(\mathcal{B}_2(f)\) we obtain
\[
0=\varphi(x)_\lambda\varphi(x)=P(-\lambda)P(\partial+\lambda)x'+P(-\lambda)Q(\partial+\lambda)f(\lambda) y'=0
\]
i.e.
$P(\partial)=0,\ \varphi(x)=Q(\partial)y',\ Q(\partial)\neq0.$

\begin{itemize}
\item \(A=\mathcal{A}_1\): $\varphi(y_\lambda y)=B(\lambda,\partial)Q(\partial)y'=
\varphi(y)_\lambda\varphi(y)=R(-\lambda)R(\partial+\lambda)x'+R(-\lambda)S(\partial+\lambda)f(\lambda) y'$ yields \(R(\partial)=0\); then \(Q(\partial)=0\), a contradiction.

\item \(A=\mathcal{A}_3(V),\mathcal{A}_6(g),\mathcal{A}_7(H)\):   \(\varphi(x_\lambda y)=\varphi(x)=Q(\partial)y'=\varphi(x)_\lambda\varphi(y)=0\) , a contradiction.

\item \(A=\mathcal{A}_4(f)\):   \(\varphi(x_\lambda y)=f(\lambda)[(R(\partial)x'+S(\partial)y']=\varphi(x)_\lambda\varphi(y)=0\), a contradiction.
 \end{itemize}

$\mathcal{A}_1(B) \not\cong \mathcal{A}_3(V), \mathcal{A}_4(f), \mathcal{A}_5(U), \mathcal{A}_6(g), \mathcal{A}_7(H)$. Here $\varphi$ is assumed to be an isomorphism from $\mathcal{A}_1(B)$ to each of the listed algebras.  Then from $x_\lambda y = 0$ we obtain a contradiction.
\begin{itemize}
\item \(\mathcal{A}_1(B)\) vs \(\mathcal{A}_3(V)\):  $0=\varphi(x_\lambda y)=\varphi(x)_\lambda\varphi(y)=[P(-\lambda)S(\lambda+\partial)+Q(-\lambda)S(\lambda+\partial)V(\partial)]x'+Q(-\lambda)S(\lambda+\partial)y'$.\\
$S(\partial)=0 , \varphi(y)_\lambda\varphi(y)=\varphi(y_\lambda y)=0 \;\Longrightarrow\; P(\partial)=Q(\partial)=0$, a contradiction. \\
$S(\partial)\neq0 \;\Longrightarrow\; Q(-\lambda)=0 \;\Longrightarrow\; P(\partial)=0$, a contradiction.

\item \(\mathcal{A}_1(B)\) vs \(\mathcal{A}_4(f)\):
 $0=\varphi(x_\lambda y)=\varphi(x)_\lambda\varphi(y)=P(-\lambda)S(\partial+\lambda)f (\lambda)y'$.\\
$S(\partial)=0 ,\varphi(y)_\lambda\varphi(y)=\varphi(y_\lambda y)=0 \;\Longrightarrow\; P(\partial)=Q(\partial)=0$, a contradiction. \\
$S(\partial)\neq0 \;\Longrightarrow\; P(-\lambda)=0 ,\varphi(y)_\lambda\varphi(x)=\varphi(y_\lambda x)=0\;\Longrightarrow\; R(\partial)=0$ or $Q(\partial)=0$, a contradiction.
\item \(\mathcal{A}_1(B)\) vs \(\mathcal{A}_5(U)\):
$0=\varphi(x_\lambda y)=\varphi(x)_\lambda\varphi(y)=[Q(-\lambda)R(\lambda+\partial)+Q(-\lambda)S(\lambda+\partial)U(\lambda)]x'+Q(-\lambda)S(\lambda+\partial)y'$.\\
$Q(\partial)\neq0 ,\Longrightarrow\; R(\partial)=0$ and $S(\partial)=0$, a contradiction. \\
$Q(\partial)=0,\varphi(y)_\lambda\varphi(x)=\varphi(y_\lambda x)=0 \;\Longrightarrow\; P(\partial)=0$ or $S(\partial)=0 $, a contradiction.

\item \(\mathcal{A}_1(B)\) vs \(\mathcal{A}_6(g)\):  $0=\varphi(x_\lambda y)=\varphi(x)_\lambda\varphi(y)=[P(-\lambda)S(\lambda+\partial)+Q(-\lambda)R(\lambda+\partial)g(\lambda)]x'+Q(-\lambda)S(\lambda+\partial)y'$.\\
$S(\partial)\neq0 ,\Longrightarrow\; Q(-\lambda)=0,\Longrightarrow\; P(\partial)=0$, a contradiction. \\
$S(\partial)=0,\varphi(y)_\lambda\varphi(y)=\varphi(y_\lambda y)=0 \;\Longrightarrow\; P(\partial)=0$ and $Q(\partial)=0 $, a contradiction.

\item \(\mathcal{A}_1(B)\) vs \(\mathcal{A}_7(H)\):  identical to \(\mathcal{A}_1(B)\) vs \(\mathcal{A}_6(g)\) with \(g(\lambda)=1\).
  \end{itemize}

\(\mathcal{A}_3(V) \not\cong \mathcal{A}_4(f), \mathcal{A}_5(U), \mathcal{A}_6(g), \mathcal{A}_7(H)\). Here \(\varphi\) is assumed to be an isomorphism from \(\mathcal{A}_3(V)\) to each of the algebras \(\mathcal{A}_4(f), \mathcal{A}_5(U), \mathcal{A}_6(g), \mathcal{A}_7(H)\).

\begin{itemize}
\item \(\mathcal{A}_3(V)\) vs \(\mathcal{A}_4(f)\):
  \(\varphi(x)=\varphi(x_\lambda y)=\varphi(x)_\lambda\varphi(y)=P(-\lambda)S(\partial+\lambda)f(\lambda)y'=P(\partial)x'+Q(\partial)y'\)\\
  \(\;\Longrightarrow\;  P(\partial)=0\) and $Q(\partial)=0$, a contradiction.

\item \(\mathcal{A}_3(V)\) vs \(\mathcal{A}_5(U)\):
 \(\varphi(x)=\varphi(x_\lambda y)=\varphi(x)_\lambda\varphi(y)=[Q(-\lambda)R(\partial+\lambda)+Q(-\lambda)S(\partial+\lambda)U(\lambda)]x'+Q(-\lambda)S(\partial+\lambda)y'=P(\partial)x'+Q(\partial)y'\). \\
 \(Q(\partial)=0 \;\Longrightarrow\; P(\partial)=0\), a contradiction. \\
 \(Q(\partial)\neq0 \;\Longrightarrow\; S(\partial)=1, Q(\partial)=Q\in \mathbb{C}\setminus\{0\}).
 \(0=\varphi(y_\lambda  x)=\varphi(y)_\lambda\varphi(x)=[P(\partial+\lambda)+QU(\lambda)]x'+Qy'\) \(\;\Longrightarrow\; Q(\partial)=0\), a contradiction.

\item \(\mathcal{A}_3(V)\) vs \(\mathcal{A}_6(g)\):
  \(0=\varphi(y_\lambda x)=\varphi(y)_\lambda\varphi(x)=[R(-\lambda)Q(\partial+\lambda)+S(-\lambda)P(\partial+\lambda)g(\lambda)]x'+S(-\lambda)Q(\partial+\lambda)y'\).\\
 \(S(\partial)=0 \;\Longrightarrow\; \varphi(y)_\lambda\varphi(y)=0\):
\(V(\partial)\neq0 \;\Rightarrow\; Q(\partial)=0\);
\(V(\partial)=0 \;\Rightarrow\; R(\partial)=0\), a contradiction.\\
\(S(\partial)\neq0 \;\Longrightarrow\; Q(\partial)=0,\ \varphi(x_\lambda y)=\varphi(x)=\varphi(x)_\lambda\varphi(y) \;\Longrightarrow\; P(\partial)=P\in \mathbb{C}^\times, S(\partial)=1, g(\lambda)=0\), a contradiction.

\item \(\mathcal{A}_3(V)\) vs \(\mathcal{A}_7(H)\):  identical to the \(\mathcal{A}_3(V)\) vs \(\mathcal{A}_6(g)\) with \(g(\lambda)=1\).
\end{itemize}

\(\mathcal{A}_4(f) \not\cong \mathcal{A}_5(U), \mathcal{A}_6(g), \mathcal{A}_7(H)\).  Here \(\varphi\) is assumed to be an isomorphism from \(\mathcal{A}_4(f)\) to each of the listed algebras.

\begin{itemize}
\item \(\mathcal{A}_4(f)\) vs \(\mathcal{A}_5(U)\), \(\mathcal{A}_6(g),\mathcal{A}_7(H)\):
  \(0=\varphi(y_\lambda y)=\varphi(y)_\lambda\varphi(y)\) ,\(0=\varphi(x_\lambda x)=\varphi(x)_\lambda\varphi(x)\) \(\;\Longrightarrow\; S(\partial)=Q(\partial)=0\), a contradiction.
\end{itemize}

\(\mathcal{A}_5(U) \not\cong \mathcal{A}_6(g), \mathcal{A}_7(H)\). Here \(\varphi\) is assumed to be an isomorphism from \(\mathcal{A}_5(U)\) to each of the listed algebras.

\begin{itemize}
\item \(\mathcal{A}_5(U)\) vs \(\mathcal{A}_6(g)\):
  \(0=\varphi(x_\lambda y)=\varphi(x)_\lambda\varphi(y)=[P(-\lambda)S(\partial+\lambda)+Q(-\lambda)R(\partial+\lambda)g(\lambda)]x'+Q(-\lambda)S(\partial+\lambda)y'\).\\
 \(S(\partial)=0 ,0=\varphi(y_\lambda y)=\varphi(y)_\lambda\varphi(y)=[U(\lambda)P(\partial)+R(\partial)]x'+U(\lambda)Q(\partial)y' \;\Longrightarrow\; Q(\partial)=0\), a contradiction. \\
\(S(\partial)\neq0 \;\Longrightarrow\; Q(\partial)=0 \;\Longrightarrow\; P(\partial)=0\), a contradiction.
\item \(\mathcal{A}_5(U)\) vs \(\mathcal{A}_7(H)\):  identical to the  \(\mathcal{A}_5(U)\) vs \(\mathcal{A}_6(g)\) with \(g(\lambda)=1\).
\end{itemize}

Thus all single?ideal classes are pairwise non?isomorphic.

\end{proof}}}
\begin{rmk}
Since $(I_2=\mathbb{C}e_1\oplus \mathbb{C}e_2, \cdot)$ with $e_1\cdot e_1=2e_1$, $e_1\cdot e_2=e_2$, $e_2\cdot e_1=0$ and $e_2\cdot e_2=e_1$ is a simple left-symmetric algebra \cite[Corollary 4.1]{B7}, the change of basis
\[
e_1=2X,\qquad e_2=\sqrt{2}\,Y,
\]
gives an isomorphism $\mathcal{S}\cong \text{Cur}(I_2)$, where $\text{Cur}(I_2)$ is the current left-symmetric conformal algebra associated with $(I_2, \cdot)$ (see \cite{HL1}).
\end{rmk}
Next,
we will prove that the left-symmetric conformal algebras
$\mathcal{A}_1(B(\lambda,\partial))$-$\mathcal{B}_2(f(\lambda))$
are mutually non-isomorphic.

First, we use their sub-adjacent Lie conformal algebras to make a rough classification. Let $(R_1, \cdot_\lambda \cdot)$, $(R_2, \cdot_\lambda \cdot)$  be two left-symmetric conformal algebras and $\mathfrak{g}(R_1)$, $\mathfrak{g}(R_2)$  be their sub-adjacent Lie conformal algebras.
If $\varphi:(R_1, \cdot_\lambda \cdot)\to (R_2, \cdot_\lambda \cdot)$ is an isomorphism of left-symmetric conformal algebras, then
\begin{eqnarray*}
\varphi([a_\lambda b])
= \varphi(a_\lambda b - b_{-\lambda-\partial}a)
= \varphi(a)_\lambda \varphi(b) - \varphi(b)_{-\lambda-\partial}\varphi(a)
= [\varphi(a)_\lambda \varphi(b)],\;\;a, b\in R_1.
\end{eqnarray*}
Thus $\varphi: \mathfrak{g}(R_1)\rightarrow \mathfrak{g}(R_2)$ is  an isomorphism of Lie conformal algebras.
Therefore, if $\mathfrak{g}(R_1) \not\cong \mathfrak{g}(R_2)$, then $(R_1, \cdot_\lambda \cdot) \not\cong (R_2, \cdot_\lambda \cdot)$.
By a direct computation, the sub-adjacent  Lie conformal algebras of these left-symmetric conformal algebras are as follows.
\vspace{1mm}
\setlength{\LTpre}{0pt}
\setlength{\LTpost}{0pt}

\begingroup
\centering
\small
\begin{longtable}{|c|>{\centering\arraybackslash}m{5.2cm}|c|}
\hline
Left-symmetric conformal algebra & sub-adjacent Lie conformal algebra & Standard basis \\
\hline
\endfirsthead
\multicolumn{3}{|r|}{Continued.} \\
\hline
Left-symmetric algebra & Induced Lie conformal algebra & Standard basis \\
\hline
\endhead
\hline
\endfoot
\hline
\endlastfoot

$\mathcal{A}_1(B(\lambda,\partial))$ & $R_{\mathrm{nil}}(Q(\lambda,\partial))$, $Q(\lambda,\partial)=B(\lambda,\partial)-B(-\lambda-\partial,\partial)$ & $A=X,\ B=Y$ \\
\hline
$\mathcal{A}_2$ & abelian & $A=X,\ B=Y$ \\
\hline
$\mathcal{A}_7(H(\lambda))$ & $R_{\mathrm{nil}}(Q(\lambda,\partial))$, $Q(\lambda,\partial)=H(\lambda)(\lambda+\partial)+\lambda H(-\lambda-\partial)$ & $A=X,\ B=Y$ \\
\hline
$\mathcal{B}_1$ & abelian & $A=X,\ B=Y$ \\
\hline
$\mathcal{A}_3(V(\partial))$ & $R_{\mathrm{sol}}(-1)$ & $A=X,\ B=Y$ \\
\hline
$\mathcal{A}_4(f(\lambda))$ & $R_{\mathrm{sol}}(f(\lambda))$ & $A=Y,\ B=X$ \\
\hline
$\mathcal{A}_5(U(\lambda))$ & $R_{\mathrm{sol}}(1)$ & $A=X,\ B=Y+U(-\partial)X$ \\
\hline
$\mathcal{A}_6(g(\lambda))$ & $R_{\mathrm{sol}}(g(\lambda)-1)$ & $A=X,\ B=Y$ \\
\hline
$\mathcal{B}_2(f(\lambda))$ & $R_{\mathrm{sol}}(f(\lambda))$ & $A=Y,\ B=X$ \\
\hline

\end{longtable}
\endgroup
Here, we adopt the notations $R_{\mathrm{nil}}(Q(\lambda,\partial))$ and  $R_{\mathrm{sol}}(f(\lambda))$ which can be found in \cite[Theorem 2.21]{BCH}.
Note that the nilpotent Lie conformal algebras and the solvable but not nilpotent Lie conformal algebras are not isomorphic to each other. Therefore, we have the following result.
\begin{pro}\label{pro-p2}
$\mathcal{A}_1(B(\lambda,\partial))$, $\mathcal{A}_2$, $\mathcal{A}_7(H(\lambda))$ and $\mathcal{B}_1$ are not isomorphic to $\mathcal{A}_3(V(\partial))$, $\mathcal{A}_4(f(\lambda))$, $\mathcal{A}_5(U(\lambda))$, $\mathcal{A}_6(g(\lambda))$ and $\mathcal{B}_2(f(\lambda))$.
\end{pro}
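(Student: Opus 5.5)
The plan is to use the observation just established: any isomorphism of left-symmetric conformal algebras $\varphi:R_1\to R_2$ is also an isomorphism of the sub-adjacent Lie conformal algebras $\mathfrak{g}(R_1)\to\mathfrak{g}(R_2)$. It therefore suffices to show that the sub-adjacent Lie conformal algebras of the first group ($\mathcal{A}_1(B(\lambda,\partial))$, $\mathcal{A}_2$, $\mathcal{A}_7(H(\lambda))$, $\mathcal{B}_1$) are all nilpotent. We will also show that those of the second group ($\mathcal{A}_3(V(\partial))$, $\mathcal{A}_4(f(\lambda))$, $\mathcal{A}_5(U(\lambda))$, $\mathcal{A}_6(g(\lambda))$, $\mathcal{B}_2(f(\lambda))$) are solvable but not nilpotent. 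Nilpotency is preserved under Lie conformal isomorphisms, so no algebra of the first group can be isomorphic to one of the second.

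First, I verify the entries of the table by computing $[a_\lambda b]=a_\lambda b-b_{-\lambda-\partial}a$ in each normal form.

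For $\mathcal{A}_2$ and $\mathcal{B}_1$ all brackets vanish. For example, in $\mathcal{B}_1$ we have $[X_\lambda X]=X-X=0$ and $[Y_\lambda Y]=Y-Y=0$, while the mixed products are zero. So these are abelian, hence nilpotent.

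For $\mathcal{A}_1(B)$ we get $[X_\lambda\cdot]=0$ and $[Y_\lambda Y]=(B(\lambda,\partial)-B(-\lambda-\partial,\partial))X$, so $X$ is central. For $\mathcal{A}_7(H)$ we get $[X_\lambda X]=0$, $[X_\lambda Y]=X-X=0$, and $[Y_\lambda Y]=Q(\lambda,\partial)X$, so again $X$ is central. In both cases the derived algebra lies in $\mathbb{C}[\partial]X$, which is central, so the algebra is nilpotent of class at most two.

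For the second group, the goal is to exhibit in each case $[A_\lambda B]=\phi(\lambda)A$ with $\phi\neq 0$, using the standard basis listed in the table:
\begin{itemize}
\item In $\mathcal{A}_3(V)$, $[X_\lambda Y]=X-0=X$ and $[Y_\lambda X]=-X$, which matches $R_{\mathrm{sol}}(-1)$ up to orientation.
\item In $\mathcal{A}_4(f)$ and $\mathcal{B}_2(f)$, $[X_\lambda Y]=f(\lambda)Y$ with $f\neq 0$.
\item In $\mathcal{A}_6(g)$, $[Y_\lambda X]=(g(\lambda)-1)X$, and $g\neq 1$ ensures this is nonzero.
\item In $\mathcal{A}_5(U)$, the shift $B=Y+U(-\partial)X$ gives $[B_\lambda A]=A$, since the extra terms coming from $[X_\lambda X]=0$ vanish.
\end{itemize}

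To see that such an algebra is not nilpotent, note that iterating $\ad$ gives $[B_{\lambda_1}[B_{\lambda_2}\cdots[B_{\lambda_n}A]]]=\phi(\lambda_1)\cdots\phi(\lambda_n)A\neq 0$ for all $n$, so the lower central series never terminates. It is solvable because its derived algebra is the abelian module $\mathbb{C}[\partial]A$. Alternatively, one can simply cite the dichotomy in \cite[Theorem 2.21]{BCH} between $R_{\mathrm{nil}}$ and $R_{\mathrm{sol}}$.

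The main obstacle is the $\mathcal{A}_5(U)$ computation, where the change of basis $B=Y+U(-\partial)X$ must be handled with sesquilinearity carefully. I will check that $[(U(-\partial)X)_\lambda X]=U(\lambda)[X_\lambda X]=0$, so that $[B_\lambda A]=[Y_\lambda X]=X-X_{-\lambda-\partial}Y=X$. For the conclusion itself only non-nilpotency matters, and that already follows from $[Y_\lambda X]=X$ without the basis change.
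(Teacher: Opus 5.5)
Your proposal is correct and is essentially the paper's argument. Both pass to the sub-adjacent Lie conformal algebras and note that the first group gives nilpotent ones: abelian, or with $\mathbb{C}[\partial]X$ central containing the derived algebra. The second group gives solvable but non-nilpotent algebras of type $R_{\mathrm{sol}}$, and since Lie conformal isomorphisms preserve nilpotency, the two groups cannot be isomorphic. Your iterated-bracket computation $[B_{\lambda_1}[\cdots[B_{\lambda_n}A]]]=\phi(\lambda_1)\cdots\phi(\lambda_n)A\neq 0$ simply proves directly the non-nilpotency that the paper takes from the classification of rank-two Lie conformal algebras.
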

\delete{\begin{lem}
    $\mathcal{A}_1(B)$ and $\mathcal{A}_7(H)$ in Theorem~3.13 are not isomorphic.
\end{lem}
\begin{proof}
Here we adopt the same notation as in Lemma~3.6, and \(\phi\) is an isomorphism from \(\mathcal{A}_1(B)\) to \(\mathcal{A}_7(H)\).
    $0=\varphi(x_\lambda y)=\varphi(x)_\lambda\varphi(y)
=\bigl[P(-\lambda)S(\lambda+\partial)+Q(-\lambda)R(\lambda+\partial)
+Q(-\lambda)S(\lambda+\partial)H(\lambda)(\lambda+\partial)\bigr]x'
+Q(-\lambda)S(\lambda+\partial)y'$.\\
$S(\partial)\neq0 \implies Q(-\lambda)=0 \implies P(\partial)=0$, a contradiction.\\
$S(\partial)=0$,  $\varphi(y)_\lambda\varphi(y)=0=\varphi(y_\lambda y)=B(\lambda,\partial)(P(\partial)x'+Q(\partial)y')\implies P(\partial)=Q(\partial)=0$, a contradiction.
\end{proof}
We now introduce some isomorphism invariants to prove that the algebras
\(\mathcal{A}_3(V)\), \(\mathcal{A}_4(f)\), \(\mathcal{A}_5(U)\), \(\mathcal{A}_6(g)\), and \(\mathcal{B}_2(f)\)
are mutually non-isomorphic. Throughout this subsection, \(L\) denotes a two-dimensional left-symmetric conformal algebra
with a unique nonzero proper ideal \(I\), and \(I\) is a rank-one free \(\mathbb C[\partial]\)-submodule. We fix an ideal generator \(e\) and a complement-space generator \(q\) such that
\(L=\mathbb C[\partial]q\oplus I\). More precisely, for \(\mathcal{A}_3(V),\mathcal{A}_5(U),\mathcal{A}_6(g)\) we take
\((I,e,q)=(\mathbb C[\partial]x,\ x,\ y)\), and for \(\mathcal{A}_4(f),\mathcal{B}_2(f)\) we take
\((I,e,q)=(\mathbb C[\partial]y,\ y,\ x)\).

For each such \(L\), the quotient \(Q=L/I\) is a free \(\mathbb C[\partial]\)-module of rank one,
and the class \(\bar{q}=q+I\) is a basis of \(Q\).
The \(\lambda\)-product on \(Q\) is defined by $\bar{u}_\lambda\bar{v}=\overline{u_\lambda v}, u,v\in L.$
This definition is independent of the choice of representatives, and hence is well defined.
With this product, \(Q\) becomes a one-dimensional left-symmetric conformal algebra,
since the conformal sesquilinearity and left-symmetry identities are inherited from \(L\).
Moreover, any two bases of \(Q\) differ by a nonzero constant.

\begin{defi}
Let \(Q=L/I\) be as above. Define the \emph{quotient type} \(\tau(L)\) by
\[
\tau(L)=
\begin{cases}
1, & \text{if there exists a basis } \bar{v} \text{ of } Q \text{ such that } \bar{v}_\lambda\bar{v}=\bar{v},\\
0, & \text{otherwise}.
\end{cases}
\]
If such a basis exists, we call it a \emph{quotient idempotent basis}
\end{defi}
\begin{rmk}
Assume \(\tau(L)=1\), we show that the quotient idempotent basis is unique. Let \(\bar{v}\) and \(\bar{w}\) be two idempotent bases of \(Q\), so there exists \(c\in \mathbb{C}\setminus\{0\}) such that \(\bar{w}=c\bar{v}\). Then
\[
\bar{w}_\lambda\bar{w}=(c\bar{v})_\lambda(c\bar{v})
=c^2(\bar{v}_\lambda\bar{v})
=c^2\bar{v}
=c\bar{w}=\bar{w}.
\]
We obtain \(c=1\). Therefore \(\bar{w}=\bar{v}\), and the quotient idempotent basis is unique.
\end{rmk}
\begin{pro}
Let \(\bar{w}\) be any basis of \(Q\). Since \(Q\) is a one-dimensional left-symmetric conformal algebra, there exists a polynomial \(\alpha(\lambda,\partial)\in\mathbb{C}[\lambda,\partial]\) such that $\bar{w}_\lambda\bar{w}=\alpha(\lambda,\partial)\bar{w}.$
We claim that \(\tau(L)=1\) if and only if \(\alpha(\lambda,\partial)\) is a nonzero constant.
\end{pro}
\begin{proof}
If \(\alpha(\lambda,\partial)=c\in \mathbb{C}\setminus\{0\}\), then taking \(\bar{v}=c^{-1}\bar{w}\) we obtain
\[
\bar{v}_\lambda\bar{v}=c^{-2}(\bar{w}_\lambda\bar{w})=c^{-2}(c\bar{w})=c^{-1}\bar{w}=\bar{v},
\]
so  \(\tau(L)=1\).
Conversely, suppose there exists an idempotent basis \(\bar{v}\). Since any two bases differ by a nonzero scalar, write \(\bar{v}=k\bar{w}\) for some \(k\in \mathbb{C}\setminus\{0\}\). Then
\[
\bar{v}_\lambda\bar{v}=k^2(\bar{w}_\lambda\bar{w})
=k^2\alpha(\lambda,\partial)\bar{w}
=k\alpha(\lambda,\partial)\bar{v}.
\]
The idempotence condition gives \(k\alpha(\lambda,\partial)=1\), hence \(\alpha(\lambda,\partial)=k^{-1}\in \mathbb{C}\setminus\{0\}\).
Finally, if another basis \(\bar{w}'=d\bar{w}(d\in \mathbb{C}\setminus\{0\})\) is chosen, then the new coefficient \(\alpha'(\lambda,\partial)\) satisfies
\[
\bar{w}'_\lambda\bar{w}'=d^2\alpha(\lambda,\partial)\bar{w}
=d\alpha(\lambda,\partial)\bar{w}',
\]
so \(\alpha'(\lambda,\partial)=d\alpha(\lambda,\partial)\). Thus \(\alpha'(\lambda,\partial)\) is a nonzero constant if and only if \(\alpha(\lambda,\partial)\) is a nonzero constant.
\end{proof}
\begin{defi}
Let \(e\) be a generator of the ideal \(I\) and \(q\) a generator of the complementary subspace, so that \(L=\mathbb{C}[\partial]q\oplus I\). Define
\[
\ell(L)=
\begin{cases}
0, & \text{if } q_\lambda e=0,\\
1, & \text{if } q_\lambda e\neq 0,
\end{cases}
\qquad
r(L)=
\begin{cases}
0, & \text{if } e_\lambda q=0,\\
1, & \text{if } e_\lambda q\neq 0.
\end{cases}
\]
\end{defi}
\begin{rmk}
The definitions of \(\ell(L)\) and \(r(L)\) are independent of the choices of \(e\) and \(q\). Indeed, a general generator of the complementary subspace has the form $q'=c\,q+P(\partial)e, c\in \mathbb{C}\setminus\{0\},\ P\in\mathbb{C}[\partial],$
and a general generator of the ideal has the form $e'=d\,e, d\in \mathbb{C}\setminus\{0\}.$
Using \(e_\lambda e=0\) and conformal sesquilinearity, we obtain
\[
q'_\lambda e'
=(c\,q+P(\partial)e)_\lambda(d\,e)
=c d\,(q_\lambda e),
\qquad
e'_\lambda q'
=(d\,e)_\lambda(c\,q+P(\partial)e)
=c d\,(e_\lambda q).
\]
Since \(cd\neq 0\), hence \(\ell(L)\) and \(r(L)\) do not depend on the choice of \(e\) and \(q\), so they are well-defined.
\end{rmk}
\begin{defi}
Assume \(\tau(L)=1\), and let \(\bar{q}\) be the unique idempotent basis of \(Q\).
Let \(\pi:L\to Q\) be the natural projection. Denote by
$S_{\bar{q}}=\pi^{-1}(\bar{q})$ the set of all lifts of \(\bar{q}\) in \(L\).
Define
\[
\operatorname{Split}(L)=
\begin{cases}
1, & \text{if there exists } s\in S_{\bar{q}} \text{ such that } s_\lambda s=s,\\
0, & \text{otherwise}.
\end{cases}
\]
\end{defi}
\begin{rmk}
The set \(S_{\bar{q}}\) is independent of the choice of representative \(q\). If \(q'\) is another element with \(\bar{q'}=\bar{q}\), then \(q-q'\in I=\mathbb{C}[\partial]e\), and
\[
S_{\bar{q}}=\{q+P(\partial)e\mid P(\partial)\in\mathbb{C}[\partial]\}
=\{q'+P'(\partial)e\mid P'(\partial)\in\mathbb{C}[\partial]\}=S_{\bar{q'}}.
\]
Hence \(\operatorname{Split}(L)\) is well-defined.
\end{rmk}
\begin{pro}
Let \(L\) and \(L'\) be two-dimensional left-symmetric conformal algebras, each having a unique nonzero proper ideal, denoted by \(I\) and \(I'\), respectively. If \(\varphi:L\to L'\) is an isomorphism of left-symmetric conformal algebras, then
\[
\tau(L)=\tau(L'),\quad \ell(L)=\ell(L'),\quad r(L)=r(L'),\quad \operatorname{Split}(L)=\operatorname{Split}(L').
\]
\end{pro}
\begin{proof}
Since \(\varphi\) maps ideals to ideals and the ideals are unique, we have \(\varphi(I)=I'\). Let \(e\) be a generator of \(I\), and let \(q\) be a complementary generator of \(L\), so that \(L=\mathbb{C}[\partial]q\oplus I\). Let \(e'\) be a generator of \(I'\), and there exists \(c\in \mathbb{C}\setminus\{0\}\) such that $\varphi(e)=c e'.$
Since \(\{q,e\}\) is a basis of \(L\), the set \(\{\varphi(q),\varphi(e)\}\) is a basis of \(L'\). Hence $L'=\mathbb{C}[\partial]\varphi(q)\oplus I',$
so \(\varphi(q)\) is a complementary generator of \(L'\) with respect to \(I'\).

Define the quotient map $\bar\varphi: Q=L/I \longrightarrow Q'=L'/I',
\bar\varphi(\bar u)=\overline{\varphi(u)}.$
This map is well-defined: if \(\bar u=\bar v\), then \(u-v\in I\), hence \(\varphi(u)-\varphi(v)\in I'\), so \(\overline{\varphi(u)}=\overline{\varphi(v)}\). Moreover, \(\bar\varphi\) is a \(\mathbb{C}[\partial]\)-module isomorphism and preserves the \(\lambda\)-product:
\[
\bar\varphi(\bar u_\lambda\bar v)
=\overline{\varphi(u_\lambda v)}
=\overline{\varphi(u)_\lambda\varphi(v)}
=\bar\varphi(\bar u)_\lambda\bar\varphi(\bar v).
\]
Thus \(\bar\varphi\) is an isomorphism of quotient algebras.

\textbf{(1) Invariance of \(\tau\).}
Assume \(\tau(L)=1\). Then \(Q\) has an idempotent basis \(\bar v\). Since \(\bar\varphi\) is an isomorphism of quotient algebras, \(\bar v'=\bar\varphi(\bar v)\) is a basis of \(Q'\), and
\[
\bar v'_\lambda\bar v'
=\bar\varphi(\bar v_\lambda\bar v)
=\bar\varphi(\bar v)
=\bar v',
\]
so \(Q'\) also has an idempotent basis; hence \(\tau(L')=1\). Conversely, if \(\tau(L')=1\), applying \(\bar\varphi^{-1}\) yields \(\tau(L)=1\). Therefore \(\tau(L)=\tau(L')\).

\textbf{(2) Invariance of \(\ell\) and \(r\).}
We compute \(\ell(L')\) and \(r(L')\) using the complementary generator \(\varphi(q)\) and the ideal generator \(e'\). For the left primitive product,
\[
\varphi(q_\lambda e)
=\varphi(q)_\lambda\varphi(e)
=\varphi(q)_\lambda(c e')
=c\bigl(\varphi(q)_\lambda e'\bigr).
\]
Since \(c\neq0\) and \(\varphi\) is bijective,
\[
q_\lambda e=0 \iff \varphi(q)_\lambda e'=0.
\]
By the well-definedness of \(\ell\), the vanishing of \(\varphi(q)_\lambda e'\) is exactly what determines \(\ell(L')\). Hence \(\ell(L)=\ell(L')\).
Similarly,\(r(L)=r(L')\).

\textbf{(3) Invariance of \(\operatorname{Split}\).}
Assume \(\tau(L)=1\); then \(\tau(L')=1\) by (1). Let \(\bar q\) and \(\bar q'\) be the unique idempotent bases of \(Q\) and \(Q'\), respectively. Since \(\bar\varphi\) is an isomorphism of quotient algebras and idempotent bases are unique, we have \(\bar q'=\bar\varphi(\bar q)\).

If \(\operatorname{Split}(L)=1\), there exists \(s\in S_{\bar q}\) such that \(s_\lambda s=s\). Then $\pi'(\varphi(s))=\overline{\varphi(s)}=\bar\varphi(\pi(s))=\bar\varphi(\bar q)=\bar q',$
so \(\varphi(s)\in S_{\bar q'}\), and $\varphi(s)_\lambda\varphi(s)=\varphi(s_\lambda s)=\varphi(s).$
Thus \(\varphi(s)\) is an idempotent element in \(S_{\bar q'}\), and hence \(\operatorname{Split}(L')=1\). Conversely, if \(\operatorname{Split}(L')=1\), applying \(\varphi^{-1}\) gives \(\operatorname{Split}(L)=1\). Therefore \(\operatorname{Split}(L)=1 \iff \operatorname{Split}(L')=1\), so \(\operatorname{Split}\) is invariant.

Combining the above arguments, all four invariants are invariant under isomorphisms of left-symmetric conformal algebras.
\end{proof}
\begin{thm}
The left-symmetric conformal algebras $\mathcal{A}_3(V),\mathcal{A}_4(f), \mathcal{A}_5(U),\mathcal{A}_6(g),\mathcal{B}_2(f)$
are mutually non-isomorphic.
\end{thm}
\begin{proof}
    The values of the isomorphism invariants are given in the following table:
\setlength{\LTpre}{0pt}
\setlength{\LTpost}{0pt}

\begingroup
\centering
\begin{longtable}{|c|c|c|c|c|}
\hline
Algebra & $\tau$ & $\ell$ & $r$ & $\operatorname{Split}$\\
\hline
\endfirsthead
\multicolumn{5}{|r|}{Continued.}\\
\hline
Algebra & $\tau$ & $\ell$ & $r$ & $\operatorname{Split}$\\
\hline
\endhead
\hline
\endfoot
\hline
\endlastfoot
$\mathcal{A}_3(V)$ & $1$ & $0$ & $1$ & $0\ (V(\partial)\neq0),\ 1\ (V(\partial)=0)$\\
\hline
$\mathcal{A}_4(f)$ & $0$ & $1$ & $0$ & $-$\\
\hline
$\mathcal{A}_5(U)$ & $1$ & $1$ & $0$ & $0$\\
\hline
$\mathcal{A}_6(g)$ & $1$ & $1$ & $1$ & $1$\\
\hline
$\mathcal{B}_2(f)$ & $1$ & $1$ & $0$ & $1$\\
\hline
\end{longtable}
\endgroup
Hence the five algebras are pairwise non-isomorphic.

We present a simple computation showing that \(\operatorname{Split}(\mathcal{A}_5(U))=0\).
In \(\mathcal{A}_5(U)\), the ideal generator is \(e=x\) and the complementary generator is \(q=y\). The quotient idempotent basis is \(\bar{y}=y+I\). Every lift has the form $s=y+P(\partial)x, P(\partial)\in\mathbb{C}[\partial].$
we compute $s_\lambda s=\bigl(U(\lambda)+P(\lambda+\partial)\bigr)x+y.$
If \(s_\lambda s=s\), comparing the coefficient of \(x\) gives $U(\lambda)=P(\partial)-P(\partial+\lambda).$
The left-hand side is independent of \(\partial\). Since \(U(\lambda)\neq 0\), the right-hand side must be a nonzero polynomial in \(\lambda\) independent of \(\partial\). This forces \(\deg P(\partial)\le 1\). Indeed, if \(\deg P(\partial)\ge 2\), then \(P(\partial)-P(\partial+\lambda)\) would contain positive powers of \(\partial\), contradicting the independence of \(\partial\). Thus \(P(t)=at+b\) for some \(a,b\in\mathbb{C}\). Substitution gives
$U(\lambda)=P(\partial)-P(\partial+\lambda)=-a\lambda$, contradicting the assumptions \(U(\lambda)\not\sim\lambda\) and $U(\lambda)\neq0$. Therefore no such lift exists, and \(\operatorname{Split}(\mathcal{A}_5(U))=0\).
\end{proof}}

\begin{defi}
Let $(R, \cdot_\lambda \cdot)$ be a left-symmetric conformal algebra. A  nonzero element $a\in R$ is called a {\bf square-zero element}, if $a_\lambda a=0$. A nonzero element $a\in R$ is called a {\bf left annihilator} ({\bf right annihilator}) , if $a_\lambda b=0$ for all $b\in R$ ($b_\lambda a=0$ for all $b\in R$).
A nonzero element $a\in R$ is called an {\bf idempotent}, if $a_\lambda a=a$.
\end{defi}
\begin{rmk}
It is easy to see that the existence of each of the above four types of elements is preserved under isomorphisms of left-symmetric conformal algebras. Therefore,  the existence of each of the above four types of elements are isomorphism invariants.
\end{rmk}
\begin{pro}\label{pp4.3}
    The left-symmetric conformal algebras \(\mathcal{A}_1(B(\lambda,\partial))\), \(\mathcal{A}_2\), \(\mathcal{A}_7(H(\lambda))\), and \(\mathcal{B}_1\) are mutually non-isomorphic.
\end{pro}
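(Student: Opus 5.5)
We distinguish the four algebras using the invariants just introduced: the existence of idempotents, of left annihilators, and of square-zero elements. Throughout, write a general element as $a=P(\partial)X+Q(\partial)Y$. By sesquilinearity, $(P(\partial)X)_\lambda(S(\partial)Y)=P(-\lambda)S(\lambda+\partial)X_\lambda Y$, and similarly for the other products. So each condition reduces to polynomial identities in $\lambda,\partial$, which we settle by comparing degrees or by specializing variables.

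\emph{Idempotents.} In $\mathcal{B}_1$ the element $X$ is idempotent, and in $\mathcal{A}_2$ the element $Y$ is idempotent. In $\mathcal{A}_1(B(\lambda,\partial))$ we have $a_\lambda a=Q(-\lambda)Q(\lambda+\partial)B(\lambda,\partial)X$, which has no $Y$-component. Hence $a_\lambda a=a$ forces $Q=0$, and then $a_\lambda a=0\neq a$. So $\mathcal{A}_1$ has no idempotent.

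\emph{Separating $\mathcal{A}_2$ from $\mathcal{B}_1$.} In $\mathcal{A}_2$ the element $X$ is a left annihilator. In $\mathcal{B}_1$ we have $a_\lambda X=P(-\lambda)P(\lambda+\partial)X$ and $a_\lambda Y=Q(-\lambda)Q(\lambda+\partial)Y$. If $a$ is a left annihilator, both vanish, which forces $P=Q=0$. So $\mathcal{B}_1$ has no left annihilator, and $\mathcal{A}_2\not\cong\mathcal{B}_1$.

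\emph{The algebra $\mathcal{A}_7(H(\lambda))$.} It is nilpotent-type by the earlier table, yet we show it has an idempotent. Take $a=R(\partial)X+Y$. Then
\[
a_\lambda a=\bigl(R(-\lambda)+R(\lambda+\partial)+H(\lambda)(\lambda+\partial)\bigr)X+Y,
\]
using $X_\lambda Y=X$, $Y_\lambda X=X$, $X_\lambda X=0$ and $Y_\lambda Y=H(\lambda)(\lambda+\partial)X+Y$. Idempotency requires
\[
R(-\lambda)+R(\lambda+\partial)+H(\lambda)(\lambda+\partial)=R(\partial).
\]
Setting $\partial=-\lambda$ gives $R(-\lambda)=0$, so $R=0$, and then the identity becomes $H(\lambda)(\lambda+\partial)=0$. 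So $a$ is idempotent only when $H=0$. For $H\neq 0$ we examine general $a$: the $Y$-coefficient of $a_\lambda a$ is $Q(-\lambda)Q(\lambda+\partial)$, and equating it with $Q(\partial)$ forces $Q$ to be a constant $q$ with $q=1$. This brings us back to the computation above, so $\mathcal{A}_7(H)$ with $H\neq0$ has no idempotent. Therefore:
\begin{itemize}
\item $\mathcal{A}_7(0)$ has an idempotent, which separates it from $\mathcal{A}_1$.
\item $\mathcal{A}_7(H)$ with $H\neq0$ has no idempotent, which separates it from $\mathcal{A}_2$ and $\mathcal{B}_1$.
\end{itemize}

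\emph{Remaining pairs.} It remains to separate $\mathcal{A}_7(0)$ from $\mathcal{A}_2$ and $\mathcal{B}_1$, and $\mathcal{A}_7(H)$ with $H\neq0$ from $\mathcal{A}_1(B)$. For the first pair we use the sub-adjacent Lie algebra or left annihilators. $\mathcal{A}_7(0)$ is commutative-looking but not abelian: $[X_\lambda Y]=X-X=0$ and $[Y_\lambda Y]=0$, so its Lie algebra is abelian as well. We therefore use left annihilators instead. In $\mathcal{A}_7(0)$, $a_\lambda Y$ has $Y$-coefficient $Q(-\lambda)Q(\lambda+\partial)$ and $X$-coefficient $P(-\lambda)+Q(-\lambda)(\cdots)$, so a left annihilator requires $Q=0$ and $P=0$. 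Hence $\mathcal{A}_7(0)$ has no left annihilator, while $\mathcal{A}_2$ does, so $\mathcal{A}_7(0)\not\cong\mathcal{A}_2$. Against $\mathcal{B}_1$ we count square-zero elements. In $\mathcal{A}_7(0)$, $X$ is square-zero, whereas in $\mathcal{B}_1$
\[
a_\lambda a=P(-\lambda)P(\lambda+\partial)X+Q(-\lambda)Q(\lambda+\partial)Y,
\]
which vanishes only when $P=Q=0$. So $\mathcal{B}_1$ has no square-zero element.

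\emph{Main obstacle: $\mathcal{A}_7(H)$ with $H\neq0$ versus $\mathcal{A}_1(B)$.} Here we argue directly. Suppose $\varphi\colon\mathcal{A}_1(B)\to\mathcal{A}_7(H)$ is an isomorphism, and write
\[
\varphi(X)=PX'+QY',\qquad \varphi(Y)=RX'+SY'.
\]
From $X_\lambda Y=0$ we get $\varphi(X)_\lambda\varphi(Y)=0$, whose $Y'$-coefficient is $Q(-\lambda)S(\lambda+\partial)$, so $Q=0$ or $S=0$.
\begin{itemize}
\item If $Q=0$, then $P\neq0$ and $S\neq0$. The $X'$-coefficient is $P(-\lambda)S(\lambda+\partial)\neq0$, a contradiction.
\item If $S=0$, then $Q,R\neq0$. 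From $Y_\lambda Y=B\,X$ we get $\varphi(Y)_\lambda\varphi(Y)=0=B(\lambda,\partial)\varphi(X)$, so $B=0$. But then $\mathcal{A}_1(0)$ has zero product, while $\mathcal{A}_7(H)$ does not, a contradiction.
\end{itemize}

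This case analysis, together with the degree arguments above, completes the proof.
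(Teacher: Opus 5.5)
Your invariants (square-zero elements, left annihilators, idempotents) are the ones the paper uses, but one computation is wrong and it leaves a hole. In $\mathcal{A}_7(H(\lambda))$, substituting $\partial=-\lambda$ into $R(-\lambda)+R(\lambda+\partial)+H(\lambda)(\lambda+\partial)=R(\partial)$ gives $R(-\lambda)+R(0)=R(-\lambda)$. That yields only $R(0)=0$, not $R=0$.

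In fact your claim that $\mathcal{A}_7(H)$ with $H\neq 0$ has no idempotent is false. Take $H(\lambda)=c\lambda$ and $R(\partial)=-\frac{c}{2}\partial^2$. Then $R(\partial)-R(\lambda+\partial)-R(-\lambda)=c\lambda(\lambda+\partial)$, so $-\frac{c}{2}\partial^2X+Y$ is an idempotent. The correct statement, as in the paper, is that an idempotent exists if and only if $H(\lambda)=c\lambda$ for some $c\in\mathbb{C}$. So your separation of $\mathcal{A}_7(c\lambda)$, $c\neq0$, from $\mathcal{A}_2$ and $\mathcal{B}_1$ is not established, since both of those also have idempotents. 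Your treatment of $\mathcal{A}_7(0)$ does not cover this case.

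The repair is already in your proof: the left-annihilator and square-zero computations you did for $\mathcal{A}_7(0)$ do not depend on $H$. For $a=P(\partial)X+Q(\partial)Y$ in any $\mathcal{A}_7(H)$ one has $a_\lambda X=Q(-\lambda)X$, and if $Q=0$ then $a_\lambda Y=P(-\lambda)X$. Hence no $\mathcal{A}_7(H)$ has a left annihilator, while $X$ is always square-zero. This separates every $\mathcal{A}_7(H)$ from $\mathcal{A}_1(B(\lambda,\partial))$ and $\mathcal{A}_2$, where $X$ is a left annihilator, and from $\mathcal{B}_1$, which has no square-zero elements. This is exactly how the paper's table works.

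With that repair, the idempotent analysis of $\mathcal{A}_7$ and your direct argument against $\mathcal{A}_1(B(\lambda,\partial))$ become unnecessary, though the latter is correct. Two small slips do not affect your conclusions: the $Y$-coefficient of $a_\lambda Y$ in $\mathcal{A}_7$ is $Q(-\lambda)$, not $Q(-\lambda)Q(\lambda+\partial)$, and in $\mathcal{B}_1$ one has $a_\lambda X=P(-\lambda)X$, not $P(-\lambda)P(\lambda+\partial)X$.
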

\begin{proof}
\delete{We first introduce some isomorphism invariants for left-symmetric conformal algebras. Let \(L\) be a left-symmetric conformal algebra.
Nonzero square-zero element: there exists \(0\neq u\in L\) such that \(u_\lambda u=0\).\\Nonzero left annihilator: there exists \(0\neq u\in L\) such that \(u_\lambda v=0\) for all \(v\in L\).\\
Nonzero idempotent: there exists \(0\neq u\in L\) such that \(u_\lambda u=u\).\\
If \(\varphi:L\to M\) is an isomorphism of left-symmetric conformal algebras, then \(\varphi(u_\lambda v)=\varphi(u)_\lambda\varphi(v)\) for all \(u,v\in L\). It follows directly that the existence of each of the above three types of elements is preserved under isomorphism. Hence they are isomorphism invariants.}
A direct check of the existence of nonzero square-zero elements, nonzero left annihilators, and nonzero idempotents gives the following table.
\begin{eqnarray*}
\begin{array}{|c|c|c|c|}
\hline
\text{Algebra} & \text{nonzero square-zero} & \text{nonzero left annihilator} & \text{nonzero idempotent}\\
\hline
\mathcal{A}_1(B(\lambda,\partial)) & \text{yes} & \text{yes} & \text{no}\\
\hline
\mathcal{A}_2 & \text{yes} & \text{yes} & \text{yes}\\
\hline
\mathcal{A}_7(H(\lambda)) & \text{yes} & \text{no} & \text{yes iff } H(\lambda)=c\lambda ~~\text{for some $c\in \mathbb{C}$}\\
\hline
\mathcal{B}_1 & \text{no} & \text{no} & \text{yes}\\
\hline
\end{array}
\end{eqnarray*}
Since the triples of invariants are mutually different,  the four left-symmetric conformal algebras are pairwise non-isomorphic.
We illustrate the computation by determining the existence of a nonzero idempotent for $\mathcal{A}_7(H(\lambda))$. The remaining computations are similar.
Let $u=P(\partial)X+Q(\partial)Y\neq 0$ satisfy $u_\lambda u=u$. Expanding in $\mathcal{A}_7(H(\lambda))$ and comparing the coefficients of $Y$ yields $Q(-\lambda)Q(\partial+\lambda)=Q(\partial)$. Therefore, $Q(\partial)$ is a constant. If $Q(\partial)=0$, then $u=P(\partial)X$. Since $X_\lambda X=0$, $u_\lambda u=0\neq u$, a contradiction. Hence $Q(\partial)\neq0$ i.e. $Q(\partial)=1$.
Comparing the coefficients of $X$ gives
\begin{align}\label{72-}
    P(\partial)-P(\partial+\lambda)-P(-\lambda)=H(\lambda)(\lambda+\partial).
\end{align}
Write $P(\partial)=\sum_{k=0}^n p_k \partial^k$. If $n\ge3$, the left-hand side of Eq.~(\ref{72-}) has degree $n-1$ in $\partial$, while the right-hand side has degree at most $1$, a contradiction. So $n\le2$. Checking $n=0,1,2$ shows that Eq.~(\ref{72-}) has a solution if and only if $H(\lambda)=c\lambda$ for some $c\in\mathbb{C}$, in which case $u=-\frac{c}{2}\partial^2X+Y$ is a nonzero idempotent.
Hence $\mathcal{A}_7(H(\lambda))$ contains a nonzero idempotent if and only if $H(\lambda)=c\lambda$ for some $c\in\mathbb{C}$.
\end{proof}
\begin{pro}\label{pp4.4}
    The left-symmetric conformal algebras \(\mathcal{A}_3(V(\partial))\), \(\mathcal{A}_4(f(\lambda))\), \(\mathcal{A}_5(U(\lambda))\), \(\mathcal{A}_6(g(\lambda))\), and \(\mathcal{B}_2(f(\lambda))\) are mutually non-isomorphic.
\end{pro}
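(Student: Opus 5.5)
The plan mirrors the proof of Proposition~\ref{pp4.3}: attach to each of the five algebras the triple (existence of a nonzero left annihilator, existence of a nonzero right annihilator, existence of a nonzero idempotent). By the remark following the definition of these elements, all three are isomorphism invariants. It then suffices to check that the five triples are pairwise distinct. In every case I write a general nonzero element as $u=P(\partial)X+Q(\partial)Y$ and use conformal sesquilinearity, so that, for example, $u_\lambda X=P(-\lambda)X_\lambda X+Q(-\lambda)Y_\lambda X$ and $X_\lambda u=P(\lambda+\partial)X_\lambda X+Q(\lambda+\partial)X_\lambda Y$. I then compare the coefficients of $X$ and $Y$.

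The expected values are as follows.

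\emph{$\mathcal{A}_3(V(\partial))$.} $X$ is a right annihilator. There is no left annihilator, since $u_\lambda Y=\bigl(P(-\lambda)+Q(-\lambda)V(\partial)\bigr)X+Q(-\lambda)Y$ forces first $Q=0$ and then $P=0$.

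\emph{$\mathcal{A}_4(f(\lambda))$.} $Y$ is a left annihilator and $X$ is a right annihilator. There is no idempotent, because the $X$-coefficient of $u_\lambda u$ is always $0$; hence $P=0$, and then $u_\lambda u=0$.

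\emph{$\mathcal{A}_5(U(\lambda))$.} $X$ is a left annihilator. There is no right annihilator, since $Y_\lambda u=\bigl(P(\lambda+\partial)+Q(\lambda+\partial)U(\lambda)\bigr)X+Q(\lambda+\partial)Y$ forces $Q=0$ and then $P=0$.

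\emph{$\mathcal{A}_6(g(\lambda))$.} There is no left annihilator, because $u_\lambda Y=P(-\lambda)X+Q(-\lambda)Y$. There is no right annihilator: $X_\lambda u=Q(\lambda+\partial)X$ forces $Q=0$, and then $Y_\lambda u=g(\lambda)P(\lambda+\partial)X\neq0$ since $g\neq0$.

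\emph{$\mathcal{B}_2(f(\lambda))$.} $Y$ is a left annihilator and $X$ is an idempotent. There is no right annihilator, since $X_\lambda u=P(\lambda+\partial)X+f(\lambda)Q(\lambda+\partial)Y$.

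These values separate the algebras as follows. $\mathcal{A}_3$ is (no, yes, $\ast$), $\mathcal{A}_6$ is (no, no, $\ast$), and $\mathcal{A}_4$ is (yes, yes, $\ast$). The remaining two, $\mathcal{A}_5$ and $\mathcal{B}_2$, are both (yes, no, $\ast$), so the idempotent invariant is needed only to distinguish $\mathcal{A}_5(U(\lambda))$ from $\mathcal{B}_2(f(\lambda))$.

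The only nontrivial step, and the one I expect to be the main obstacle, is showing that $\mathcal{A}_5(U(\lambda))$ has no nonzero idempotent. Take $u=P(\partial)X+Q(\partial)Y$ with $u_\lambda u=u$.

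First, the $Y$-coefficient gives $Q(-\lambda)Q(\lambda+\partial)=Q(\partial)$. This forces $Q$ to be constant, so $Q\in\{0,1\}$.

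Next, $Q=0$ is impossible, because then $u_\lambda u=0$. So $Q=1$. Since $X_\lambda X=X_\lambda Y=0$, we get $u_\lambda u=Y_\lambda(PX+Y)=\bigl(P(\lambda+\partial)+U(\lambda)\bigr)X+Y$.

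Equating the $X$-coefficients with those of $u$ gives $P(\partial)-P(\partial+\lambda)=U(\lambda)$. The right-hand side is independent of $\partial$, so $\deg P\le1$; writing $P(\partial)=p_1\partial+p_0$ yields $U(\lambda)=-p_1\lambda$. This contradicts the standing hypothesis $U(\lambda)\not\sim\lambda$, which also excludes $U=0$.

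All other checks are immediate from the products listed in Proposition~\ref{pro-1}. Once the triples are verified to be pairwise distinct, the proposition follows.
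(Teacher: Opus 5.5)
Your proposal is correct and follows the paper's proof: it uses the same three invariants (existence of a nonzero left annihilator, a nonzero right annihilator, and a nonzero idempotent), and your values agree with the paper's table. The difference is that you supply the verifications the paper leaves as ``a direct check,'' in particular the argument that $\mathcal{A}_5(U(\lambda))$ has no nonzero idempotent, which is where the hypothesis $U(\lambda)\not\sim\lambda$ is used. You also correctly observe that the idempotent invariant is needed only to separate $\mathcal{A}_5(U(\lambda))$ from $\mathcal{B}_2(f(\lambda))$.
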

\begin{proof}
A direct check of the existence of nonzero left annihilators, nonzero right annihilators and nonzero idempotents of these left-symmetric conformal algebras gives the following table.
\begin{center}
\begin{tabular}{|c|c|c|c|}
\hline
Algebra & nonzero left annihilator & nonzero right annihilator & nonzero idempotent \\
\hline
\(\mathcal{A}_3(V(\partial))\) & no & yes & yes iff \(V(\partial)=0\) \\
\hline
\(\mathcal{A}_4(f(\lambda))\) & yes & yes & no \\
\hline
\(\mathcal{A}_5(U(\lambda))\) & yes & no & no \\
\hline
\(\mathcal{A}_6(g(\lambda))\) & no & no & yes \\
\hline
\(\mathcal{B}_2(f(\lambda))\) & yes & no & yes \\
\hline
\end{tabular}
\end{center}
Since the triples of invariants are mutually different, the five left-symmetric conformal algebras are pairwise non-isomorphic.

\delete{We illustrate the computation by showing that $\mathcal{A}_5(U(\lambda))$ has no nonzero idempotents. Let $u=P(\partial)X+Q(\partial)Y\neq0$ satisfy $u_\lambda u=u$.
Then
\[
\begin{aligned}
u_\lambda u
&=(P(\partial)x+Q(\partial)y)_\lambda(P(\partial)x+Q(\partial)y)\\
&=\bigl[Q(-\lambda)P(\partial+\lambda)+Q(-\lambda)Q(\partial+\lambda)U(\lambda)\bigr]x+Q(-\lambda)Q(\partial+\lambda)y.
\end{aligned}
\]
Comparing this with \(u=P(\partial)x+Q(\partial)y\), the coefficient of \(y\) gives $Q(-\lambda)Q(\partial+\lambda)=Q(\partial).$
By an argument analogous to the proof of Proposition ~\ref{pp4.3}, we obtain \(Q(\partial)=1\).

Now comparing the coefficients of \(x\) yields $P(\partial)-P(\partial+\lambda)=U(\lambda)$.
Write \(P(t)=\sum_{k=0}^n p_k t^k\), then we have $n\leq1$ and $-p_1\lambda=U(\lambda).$
This contradicts the hypothesis \(U(\lambda)\not\sim\lambda\). Therefore \(\mathcal{A}_5(U(\lambda))\) has no nonzero idempotent.}
\end{proof}

\begin{thm}\label{pp4.5}
All compatible left-symmetric conformal algebra structures on solvable Lie
conformal algebras which are free of rank two as $\mathbb{C}[\partial]$-modules
are isomorphic to one of the following ten types of normal forms:
\[
\mathcal{A}_1(B(\lambda,\partial)),\quad
\mathcal{A}_2,\quad
\mathcal{A}_3(V(\partial)),\quad
\mathcal{A}_4(f(\lambda)),\quad
\mathcal{A}_5(U(\lambda)),
\]
\[
\mathcal{A}_6(g(\lambda)),\quad
\mathcal{A}_7(H(\lambda)),\quad
\mathcal{B}_1,\quad
\mathcal{B}_2(f(\lambda)),\quad
\mathcal{S}.
\]
Normal forms belonging to two different types are not isomorphic. Within each
parametrized type, the isomorphism relation is given by
\begin{eqnarray*}
\begin{array}{|c|c|}
\hline
\text{Algebra type} & \text{Isomorphism condition} \\
\hline
\mathcal{A}_1(B_1(\lambda,\partial)) \cong \mathcal{A}_1(B_2(\lambda,\partial)) & B_1(\lambda,\partial)=c\,B_2(\lambda,\partial),\quad c\in \mathbb{C}\setminus\{0\}\\
\hline
\mathcal{A}_3(V_1(\partial)) \cong \mathcal{A}_3(V_2(\partial)) & V_1(\partial)=c\,V_2(\partial),\quad c\in \mathbb{C}\setminus\{0\}\\
\hline
\mathcal{A}_4(f_1(\lambda)) \cong \mathcal{A}_4(f_2(\lambda)) & f_1(\lambda)=c\,f_2(\lambda),\quad c\in \mathbb{C}\setminus\{0\}\\
\hline
\mathcal{A}_5(U_1(\lambda)) \cong \mathcal{A}_5(U_2(\lambda)) & U_1(\lambda)=a\,U_2(\lambda)+c\,\lambda,\quad a\in \mathbb{C}\setminus\{0\},\ c\in\mathbb{C} \\
\hline
\mathcal{A}_6(g_1(\lambda)) \cong \mathcal{A}_6(g_2(\lambda)) & g_1(\lambda)=g_2(\lambda) \\
\hline
\mathcal{A}_7(H_1(\lambda)) \cong \mathcal{A}_7(H_2(\lambda)) & H_1(\lambda)=a\,H_2(\lambda)+c\,\lambda,\quad a\in \mathbb{C}\setminus\{0\},\ c\in\mathbb{C} \\
\hline
\mathcal{B}_2(f_1(\lambda)) \cong \mathcal{B}_2(f_2(\lambda)) & f_1(\lambda)=f_2(\lambda) \\
\hline
\end{array}
\end{eqnarray*}
\end{thm}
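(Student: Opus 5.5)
The first two assertions of Theorem \ref{pp4.5} I would assemble from earlier results. By Theorem \ref{thm-1} every compatible structure is one of $\mathcal{L}_1$--$\mathcal{L}_{12}$, and Proposition \ref{pro-1} reduces each of these to one of the ten normal forms. Non-isomorphism across types then follows from three results:
\begin{itemize}
\item Proposition \ref{pp1} separates $\mathcal{S}$ from all other forms (it is the only simple one);
\item Proposition \ref{pro-p2} separates the forms with nilpotent (or abelian) sub-adjacent Lie conformal algebra from those with solvable non-nilpotent sub-adjacent algebra;
\item Propositions \ref{pp4.3} and \ref{pp4.4} separate the forms inside each of these two groups.
\end{itemize}
So the remaining work is the table of isomorphism conditions inside each parametrized type.

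For this I would write a putative isomorphism as $\varphi(X)=P(\partial)X+Q(\partial)Y$ and $\varphi(Y)=R(\partial)X+S(\partial)Y$, with $PS-QR\in\mathbb{C}\setminus\{0\}$. The first step is to use invariant elements to force $\varphi(X)=\alpha X$ with $\alpha\in\mathbb{C}\setminus\{0\}$. In $\mathcal{A}_1(B)$ with $B\neq0$, the left annihilators are exactly the nonzero elements of $\mathbb{C}[\partial]X$. In $\mathcal{A}_3$ the right annihilators are exactly the nonzero elements of $\mathbb{C}[\partial]X$. In $\mathcal{A}_4$, $\mathcal{A}_5$, $\mathcal{A}_6$, $\mathcal{A}_7$ and $\mathcal{B}_2$, I would use the unique proper ideal found in the proof of Proposition \ref{pp1}: $\mathbb{C}[\partial]X$ for $\mathcal{A}_5$, $\mathcal{A}_6$, $\mathcal{A}_7$, and $\mathbb{C}[\partial]Y$ for $\mathcal{A}_4$, $\mathcal{B}_2$. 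In each case the preserved submodule must map onto itself, so its generator goes to a constant multiple of itself; invertibility of the matrix then makes the complementary diagonal entry a nonzero constant $s$. This gives a triangular $\varphi$, for instance $\varphi(X)=\alpha X$ and $\varphi(Y)=R(\partial)X+sY$ (with the roles of $X,Y$ swapped for $\mathcal{A}_4$ and $\mathcal{B}_2$).

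The next step is to substitute into the defining products and compare coefficients. I expect the following outcomes.
\begin{itemize}
\item For $\mathcal{A}_1$: $Y_\lambda Y$ gives $s^2B_2=\alpha B_1$, which is the scalar relation.
\item For $\mathcal{A}_3$: $X_\lambda Y=X$ forces $s=1$. Then $Y_\lambda Y$ gives $R(-\lambda)-R(\partial)+V_2(\partial)=\alpha V_1(\partial)$, so $R$ is constant and $V_2=\alpha V_1$.
\item For $\mathcal{A}_4$: $X_\lambda Y$ yields $f_1=\alpha f_2$ after normalizing.
\item For $\mathcal{A}_6$ and $\mathcal{B}_2$: the normalizations $X_\lambda Y=X$ (resp.\ $X_\lambda X=X$) and $Y_\lambda Y=Y$ pin $\alpha$ and $s$ to $1$, so the parameter must be equal on the nose. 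For these two types I would also cross-check this against the invariant $R_{\mathrm{sol}}(g-1)$ (resp.\ $R_{\mathrm{sol}}(f)$) from \cite{BCH}.
\end{itemize}
Conversely, in every case the diagonal or constant-shear maps realizing the stated relations are verified directly.

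The main obstacle is $\mathcal{A}_5$ and $\mathcal{A}_7$, where the non-constant entry $R(\partial)$ contributes. For $\mathcal{A}_5$, the product $Y_\lambda X=X$ forces $s=1$. Then $\varphi(Y)_\lambda\varphi(Y)$ compared with $\varphi(U_1(\lambda)X+Y)$ yields
\[
R(\lambda+\partial)-R(\partial)+U_2(\lambda)=\alpha U_1(\lambda),
\]
possibly with a sign depending on the orientation of $\varphi$. As in the degree argument used in Proposition \ref{pp4.3}, this forces $\deg R\le1$; with $R=r\partial+r_0$ it gives $U_1=\alpha^{-1}U_2+c\lambda$. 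For $\mathcal{A}_7$, the same comparison produces an equation of the shape of Eq.~(\ref{72-}) with $R$ in place of $P$. The degree analysis allows $\deg R\le 2$, and $R=k\partial^2$ contributes exactly a term $c\lambda(\lambda+\partial)$, which yields $H_1=aH_2+c\lambda$. I expect this small case check ($\deg R=0,1,2$), keeping track of the scalar $a$, to be the most delicate point of the proof.
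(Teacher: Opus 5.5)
Your overall architecture is the paper's. The first two assertions are assembled from exactly the results the paper cites. After triangularizing $\varphi$, you arrive at the same coefficient equations the paper uses for $\mathcal{A}_1$, $\mathcal{A}_3$, $\mathcal{A}_5$, $\mathcal{A}_7$. Your annihilator characterizations for $\mathcal{A}_1$ (with $B\neq 0$) and $\mathcal{A}_3$ are also correct.

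The gap is in the triangularization for $\mathcal{A}_4,\mathcal{A}_5,\mathcal{A}_6,\mathcal{A}_7,\mathcal{B}_2$. The proof of Proposition~\ref{pp1} only shows that $\mathbb{C}[\partial]X$ (resp.\ $\mathbb{C}[\partial]Y$) is \emph{a} proper ideal; it proves nothing about uniqueness. Uniqueness among all proper ideals is in fact false. In $\mathcal{A}_4(f(\lambda))$ the submodule $\mathbb{C}[\partial]Y\oplus\mathbb{C}[\partial]\partial X$ is also a proper ideal, since $X_\lambda(\partial X)=Y_\lambda(\partial X)=(\partial X)_\lambda X=0$ and $(\partial X)_\lambda Y=-\lambda f(\lambda)Y$. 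So your step ``the preserved submodule must map onto itself'' is unjustified as written.

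To repair it, restrict to a class of ideals that every isomorphism preserves, for example ideals $\mathbb{C}[\partial]v$ with $v=E(\partial)X+F(\partial)Y$ part of a $\mathbb{C}[\partial]$-basis, and prove uniqueness in that class. This takes one line per type:
\begin{itemize}
\item in $\mathcal{A}_6$ and $\mathcal{A}_7$, $X_\lambda v=F(\lambda+\partial)X$ forces $F=0$;
\item in $\mathcal{A}_5$, $v_\lambda X=F(-\lambda)X$ forces $F=0$;
\item in $\mathcal{A}_4$ and $\mathcal{B}_2$, $v_\lambda Y=E(-\lambda)f(\lambda)Y$ forces $E=0$.
\end{itemize}

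The paper avoids invariants altogether. In your notation:
\begin{itemize}
\item the $Y$-coefficient of $\varphi(X)_\lambda\varphi(X)=0$ gives $Q(-\lambda)Q(\lambda+\partial)=0$ in $\mathcal{A}_3,\mathcal{A}_5,\mathcal{A}_6,\mathcal{A}_7$, and $Q(-\lambda)Q(\lambda+\partial)B_2=0$ in $\mathcal{A}_1$;
\item the $X$-coefficient of $\varphi(X_\lambda Y)=\varphi(X)_\lambda\varphi(Y)$ gives $f_1(\lambda)R(\partial)=0$ in $\mathcal{A}_4$;
\item in $\mathcal{B}_2$, the relations $X_\lambda X=X$ and $Y_\lambda Y=0$ give $P=1$ and $R=0$.
\end{itemize}

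There are also some smaller slips.
\begin{itemize}
\item In $\mathcal{A}_6$ and $\mathcal{B}_2$ only the diagonal coefficient $s$ is forced to $1$, not $\alpha$: rescaling $X$ (resp.\ $Y$) is an automorphism, and $\mathcal{B}_2$ has $Y_\lambda Y=0$, not $Y$. This still suffices, since the remaining products give $g_1=s\,g_2$ and $f_1=s\,f_2$.
\item The converse maps for $\mathcal{A}_5$ and $\mathcal{A}_7$ are not constant shears. With $\varphi(X_1)=a^{-1}X_2$ they are $\varphi(Y_1)=\frac{c}{a}\partial X_2+Y_2$ and $\varphi(Y_1)=\frac{c}{2a}\partial^2X_2+Y_2$ respectively.
\item The trivial algebra $\mathcal{A}_1(0)$ must be split off separately.
\end{itemize}
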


\begin{proof}
By Proposition~\ref{pro-1}, every left-symmetric conformal algebra appearing
in Theorem~\ref{thm-1} is isomorphic to one of the ten normal-form types
listed above. By Propositions~\ref{pp1}, \ref{pro-p2}, \ref{pp4.3} and
\ref{pp4.4}, normal forms belonging to different types are mutually
non-isomorphic. It remains to determine the isomorphisms within each
parametrized type.

Let $\varphi$ be an isomorphism between two normal forms of the same type,
with $\mathbb{C}[\partial]$-bases $\{X_1,Y_1\}$ and $\{X_2,Y_2\}$ respectively. Write
\begin{eqnarray}
\varphi(X_1)=A(\partial)X_2+B(\partial)Y_2,\qquad
\varphi(Y_1)=C(\partial)X_2+D(\partial)Y_2,
\end{eqnarray}
where $A(\partial)$, $B(\partial)$, $C(\partial)$, $D(\partial)\in\mathbb C[\partial]$ and $A(\partial)D(\partial)-B(\partial)C(\partial)\in\mathbb C\setminus \{0\}$.

\smallskip
\noindent
\textit{Type $\mathcal A_1(B(\lambda, \partial))$.} It is easy to see that if $\mathcal A_1(B_1(\lambda, \partial))\cong \mathcal A_1(0)$, then $B_1(\lambda, \partial)=0$. Suppose that $B_2(\lambda,\partial)\neq 0$.
Then $\varphi(X_1)_\lambda\varphi(X_1)=0$ gives $B(-\lambda)B(\partial+\lambda)=0$. Thus, $B(\partial)=0$. Then invertibility  implies
$A(\partial)=a$ and $D(\partial)=d$, for some $a$, $d\in\mathbb C\setminus \{0\}$.
Using $\varphi(Y_{1\lambda}Y_1)=\varphi(B_1(\lambda,\partial)X_1)=\varphi(Y_1)_\lambda\varphi(Y_1)$, we obtain $aB_1(\lambda,\partial)
=
d^2B_2(\lambda,\partial)$.
Thus, we have $B_1(\lambda,\partial)=\frac{d^2}{a}B_2(\lambda,\partial)$.
Obviously, $B_1(\lambda,\partial)=0$ if and only if $B_2(\lambda,\partial)=0$. Therefore, $B_1(\lambda,\partial)=cB_2(\lambda,\partial)$
for some $c\in\mathbb C\setminus \{0\}$.

\smallskip
\noindent
\textit{Type $\mathcal A_3(V(\partial))$.}
As in the case of type $\mathcal A_1(B(\lambda, \partial))$, we have
$B(\partial)=0$. Hence we have
\begin{eqnarray*}
\varphi(X_1)=aX_2,\qquad
\varphi(Y_1)=C(\partial)X_2+dY_2,
\qquad a,d\in\mathbb C\setminus \{0\}.
\end{eqnarray*}
$\varphi(X_{1})_\lambda\varphi(Y_1)=\varphi(X_1)$ gives $d=1$.
$\varphi(Y_{1})_\lambda\varphi(Y_1)=\varphi(V_1(\partial)X_1+Y_1)$ yields
$aV_1(\partial)+C(\partial)
=
C(-\lambda)+V_2(\partial)$.
Hence we have $V_2(\partial)-aV_1(\partial)
=
C(\partial)-C(-\lambda)$.
Since the left-hand side is independent of $\lambda$, $C(-\lambda)$ must be a constant.
Consequently, we have $V_2(\partial)=aV_1(\partial)$. Therefore, $A_3(V_1(\partial))\cong A_3(V_2(\partial))$ if and only if
$V_1(\partial)=cV_2(\partial)$ for some $c\in\mathbb C\setminus \{0\}$.

\smallskip
\noindent
\textit{Type $\mathcal A_4(f(\lambda))$.}
From $
\varphi({X_{1}}_\lambda Y_1)
=
\varphi(X_1)_\lambda\varphi(Y_1)
$
and $f_1(\lambda)\neq0$, comparing the coefficient of  $X_2$ gives
$C(\partial)=0$. Then invertibility implies $A(\partial)=a$ and $D(\partial)=d$ for some $a$, $d\in\mathbb C\setminus \{0\}$.
 $\varphi(X_{1})_\lambda \varphi(X_1)=0$ further gives $B(\partial)=0$.
Comparing the coefficient of $Y_2$ in
$\varphi(X_{1\lambda}Y_1)=
\varphi(X_1)_\lambda\varphi(Y_1)$ yields $f_1(\lambda)=a f_2(\lambda)$.
Thus we have $f_1(\lambda)=cf_2(\lambda)$ for some $c\in\mathbb C\setminus \{0\}$.

\smallskip
\noindent
\textit{Type $\mathcal A_5(U(\lambda))$.}
As in the case of type $\mathcal A_1(B(\lambda, \partial))$, we have
$B(\partial)=0$. Thus, we have
\begin{eqnarray*}
\varphi(X_1)=aX_2,\qquad
\varphi(Y_1)=C(\partial)X_2+dY_2,
\end{eqnarray*}
for some $a$, $d\in \mathbb C\setminus \{0\}$.
$\varphi(Y_{1})_\lambda\varphi(X_1)=\varphi(X_1)$ gives $d=1$. Comparing the coefficient of
$X_2$ in
$\varphi({Y_1}_\lambda Y_1)=
\varphi(Y_1)_\lambda\varphi(Y_1)$ gives
\begin{eqnarray*}
U_2(\lambda)-aU_1(\lambda)
=
C(\partial)-C(\partial+\lambda).
\end{eqnarray*}
Since the left-hand side depends only on $\lambda$,
$\deg C(\partial)\le1$. Write $C(\partial)=c_0+c_1\partial$.
Then we have $C(\partial)-C(\partial+\lambda)=-c_1\lambda$,
and hence $U_2(\lambda)=aU_1(\lambda)-c_1\lambda$. Therefore,  we have
$U_1(\lambda)=\alpha U_2(\lambda)+c\lambda$
for some $\alpha\in\mathbb C\setminus \{0\}$ and $c\in\mathbb C$.

\smallskip
\noindent
\textit{Type $\mathcal A_6(g(\lambda))$.}
As in the case of type $\mathcal A_1(B(\lambda, \partial))$, we have
$B(\partial)=0$. Therefore, we have
\begin{eqnarray*}
\varphi(X_1)=aX_2,\qquad
\varphi(Y_1)=C(\partial)X_2+dY_2,
\end{eqnarray*}
for some $a$, $d\in \mathbb C\setminus \{0\}$.
 $\varphi(X_{1})_\lambda \varphi(Y_1)=\varphi(X_1)$ gives $d=1$. Using
$\varphi(Y_{1})_\lambda\varphi(X_1)=\varphi(g_1(\lambda)X_1)$, we obtain
$ag_1(\lambda)X_2=ag_2(\lambda)X_2$.
Hence $g_1(\lambda)=g_2(\lambda)$.

\smallskip
\noindent
\textit{Type $\mathcal A_7(H(\lambda))$.}
As in the case of type $\mathcal A_1(B(\lambda, \partial))$, we have
$B(\partial)=0$. Then we have
\begin{eqnarray*}
\varphi(X_1)=aX_2,\qquad
\varphi(Y_1)=C(\partial)X_2+dY_2,
\end{eqnarray*}
for some $a$, $d\in \mathbb C\setminus \{0\}$. $\varphi(X_{1})_\lambda \varphi(Y_1)=\varphi(X_1)$ gives $d=1$.
Using $\varphi(Y_{1})_\lambda\varphi(Y_1)=
\varphi(H_1(\lambda)(\lambda+\partial)X_1+Y_1)$, we obtain
\begin{equation}\label{eq:A7iso}
C(\partial+\lambda)-C(\partial)+C(-\lambda)
=
\bigl(aH_1(\lambda)-H_2(\lambda)\bigr)(\lambda+\partial).
\end{equation}
Let $C(\partial)=\sum_{i=0}^n c_i\partial^i$.
If $n\ge3$, then the degree of $\partial$ in the left-hand side of Eq. \eqref{eq:A7iso} is $n-1\ge2$, whereas the degree of $\partial$ in the right-hand side is at most one, a contradiction. Hence $n\le2$.
Writing $C(\partial)=c_0+c_1\partial+c_2\partial^2$,
the left-hand side becomes
$c_0+2c_2\lambda(\lambda+\partial)$.
Since the right-hand side is divisible by $\lambda+\partial$, we obtain
$c_0=0$. Therefore, we have $H_2(\lambda)=aH_1(\lambda)-2c_2\lambda$. Thus,
$H_1(\lambda)=\alpha H_2(\lambda)+c\lambda$
for some $\alpha\in\mathbb C\setminus \{0\}$ and $c\in\mathbb C$.

\smallskip
\noindent
\textit{Type $\mathcal B_2(f(\lambda))$.}
Since $\varphi(X_{1})_\lambda\varphi(X_1)=\varphi(X_1)$, comparing the coefficients of $X_2$ and
$Y_2$  gives
\begin{eqnarray*}
A(\partial)=A(-\lambda)A(\partial+\lambda),\\
B(\partial)=A(-\lambda)B(\partial+\lambda)f_2(\lambda).
\end{eqnarray*}
If $A(\partial)=0$, the second identity gives $B(\partial)=0$, contradicting with the invertibility.
Thus $A(\partial)\neq0$. The first identity forces $A(\partial)$ to be a nonzero constant. Therefore, $A(\partial)=1$.
Since $\varphi(Y_{1})_\lambda\varphi(Y_1)=0$, comparing the coefficients of $X_2$ gives $C(-\lambda)C(\partial+\lambda)=0$.
Hence $C(\partial)=0$. Then the invertibility  implies
$D(\partial)=d\in\mathbb C\setminus \{0\}$. Finally,
$\varphi(X_{1\lambda} Y_1)=
\varphi(X_1)_\lambda\varphi(Y_1)$ gives $f_1(\lambda)=f_2(\lambda)$.

It remains to prove sufficiency of the displayed parameter conditions.
This follows from explicit changes of $\mathbb{C}[\partial]$-basis. For instance, if
$B_1(\lambda, \partial)=cB_2(\lambda, \partial)$ for some $c\in \mathbb{C}\setminus \{0\}$, taking $\varphi(X_1)=c^{-1}X_2$ and $\varphi(Y_1)=Y_2$, then the $\mathbb{C}[\partial]$-module homomorphism $\varphi: \mathcal A_1(B_1(\lambda, \partial))\rightarrow \mathcal A_1(B_2(\lambda, \partial))$ is an isomorphism of left-symmetric conformal algebras.
If $V_1(\partial)=cV_2(\partial)$ for some $c\in \mathbb{C}\setminus \{0\}$, taking $\varphi(X_1)=c^{-1}X_2$ and $\varphi(Y_1)=Y_2$, then the $\mathbb{C}[\partial]$-module homomorphism $\varphi: \mathcal A_3(V_1(\partial))\rightarrow \mathcal A_3(V_2(\partial))$ is an isomorphism of left-symmetric conformal algebras.  If $f_1(\lambda)=cf_2(\lambda)$ for some $c\in \mathbb{C}\setminus \{0\}$ in
type $\mathcal A_4(f(\lambda))$, taking $\varphi(X_1)=cX_2$ and $\varphi(Y_1)=Y_2$, then the $\mathbb{C}[\partial]$-module homomorphism $\varphi: \mathcal A_4(f_1(\lambda))\rightarrow \mathcal A_4(f_2(\lambda))$ is an isomorphism of left-symmetric conformal algebras.
If $U_1(\lambda)=aU_2(\lambda)+c\lambda$ for some $a\in \mathbb{C}\setminus \{0\}$ and $c\in \mathbb{C}$,
taking
$\varphi(X_1)=a^{-1}X_2$, and $\varphi(Y_1)=\frac{c}{a}\partial X_2+Y_2$, then the $\mathbb{C}[\partial]$-module homomorphism $\varphi: \mathcal A_5(U_1(\lambda))\rightarrow \mathcal A_5(U_2(\lambda))$ is an isomorphism of left-symmetric conformal algebras. If
$H_1(\lambda)=aH_2(\lambda)+c\lambda$  for some $a\in \mathbb{C}\setminus \{0\}$ and $c\in \mathbb{C}$,
taking $\varphi(X_1)=a^{-1}X_2$ and
$\varphi(Y_1)=\frac{c}{2a}\partial^2X_2+Y_2$, then the $\mathbb{C}[\partial]$-module homomorphism $\varphi: \mathcal A_7(H_1(\lambda))\rightarrow \mathcal A_7(H_2(\lambda))$ is an isomorphism of left-symmetric conformal algebras.
For types $\mathcal A_6(g(\lambda))$ and $\mathcal B_2(f(\lambda))$, equality of the parameters gives
the identity isomorphism. Thus all the listed conditions are both
necessary and sufficient.
\end{proof}

\begin{pro}\label{prop:Nov}
Among the ten normal forms in Proposition~\ref{pro-1}, the Novikov conformal
algebras are characterized as follows:
\begin{enumerate}[label=(\roman*)]
    \item
    $\mathcal A_1(B(\lambda,\partial))$, $\mathcal A_2$,
    $\mathcal A_4(f(\lambda))$, and $\mathcal B_1$ are always
    Novikov conformal algebras;

    \item
    $\mathcal A_3(V(\partial))$ is a Novikov conformal algebra if and only if
    $V(\partial)\in\mathbb C$;

    \item
    $\mathcal A_6(g(\lambda))$ is a Novikov conformal algebra if and only if
    $g(\lambda)$ is a constant. In view of the defining restrictions of
    $\mathcal A_6$, this means
    $g(\lambda)\in\mathbb C\setminus\{0,1\}$;

    \item
    $\mathcal A_7(H(\lambda))$ is a Novikov conformal algebra if and only if
    \[
    H(\lambda)=a\lambda+b
    \]
    for some $a,b\in\mathbb C$;

    \item
    $\mathcal A_5(U(\lambda))$, $\mathcal B_2(f(\lambda))$, and
    $\mathcal S$ are not Novikov conformal algebras.
\end{enumerate}
\end{pro}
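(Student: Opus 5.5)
The plan is a direct verification of the Novikov identity on generators. Each normal form in Proposition~\ref{pro-1} is already a left-symmetric conformal algebra, so only the extra axiom
\[
(a_\lambda b)_{\lambda+\mu}c=(a_\lambda c)_{-\mu-\partial}b
\]
needs to be checked. By conformal sesquilinearity, both sides transform in the same way when any of $a,b,c$ is replaced by $\partial a$, $\partial b$ or $\partial c$. Hence it suffices to test the $8$ triples $(a,b,c)\in\{X,Y\}^3$.

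Throughout I will use one convention for the right-hand side. If $a_\lambda c=\sum_i h_i(\lambda,\partial)u_i$, then
\[
(h_i(\lambda,\partial)u_i)_{\nu}b=h_i(\lambda,-\nu)\,(u_i)_\nu b .
\]
After computing $(u_i)_\nu b=k(\nu,\partial)w$, I substitute $\nu=-\mu-\partial$, with $\partial$ acting on the whole output (placed to the left). Concretely, a constant coefficient $k$ leaves $k\,w$, and a factor $\lambda+\partial$ in $k$ becomes $-\mu$. I would fix this convention once and record it, since every case depends on it.

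\textbf{The three families whose products depend only on $\partial$ or are constant.}
\begin{itemize}
\item For $\mathcal A_1(B)$, $\mathcal A_2$ and $\mathcal B_1$, almost all products vanish or are diagonal ($Y_\lambda Y=Y$, $X_\lambda X=X$, with $X_\lambda Y=Y_\lambda X=0$). So each triple gives either $0=0$ or a matching scalar on both sides. For $\mathcal A_1$, the only nonzero product lands in $\mathbb{C}[\partial]X$ and $X$ annihilates everything, so both sides vanish.
\item For $\mathcal A_4(f)$, the only nonzero product is $X_\lambda Y=f(\lambda)Y$. The triples that matter are $(X,X,Y)$ and $(X,Y,X)$, and both sides are $0$ since $X_\lambda X=0$ and $Y_\nu X=0$.
\end{itemize}
These verifications give (i).

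\textbf{Families with a parameter.} Here I expect one or two triples to produce a functional equation on the parameter.
\begin{itemize}
\item For $\mathcal A_3(V)$, take $(a,b,c)=(Y,Y,Y)$. The left-hand side is $(V(\partial)X+Y)_{\lambda+\mu}Y=V(-\lambda-\mu)X+V(\partial)X+Y$. The right-hand side $(Y_\lambda Y)_{-\mu-\partial}Y$ gives $V(\cdot)$ evaluated at a different argument together with the same $V(\partial)X+Y$. Comparing coefficients should force $V(-\lambda-\mu)$ to equal $V$ at an argument involving $\partial$, so $V$ is constant. Conversely, for constant $V$ the remaining triples check directly.
\item For $\mathcal A_6(g)$, the triple $(Y,X,Y)$ or $(Y,Y,X)$ compares $g(\lambda+\mu)$ with $g$ evaluated at $-\mu-\partial$-type arguments. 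This should force $g$ to be constant; the restrictions in the definition of $\mathcal A_6$ then give (iii).
\item For $\mathcal A_7(H)$, the same $(Y,Y,Y)$ triple gives an identity involving $H(\lambda)(\lambda+\mu+\partial)$ versus $H$ with shifted arguments times linear factors. Comparing degrees in $\partial$ and $\mu$, I expect it to reduce to $H(\lambda)$ being affine, giving (iv).
\end{itemize}

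\textbf{The non-Novikov cases (v).} I will look for a single failing triple in each.
\begin{itemize}
\item For $\mathcal S$ and $\mathcal B_2(f)$, use $(X,X,Y)$. The left-hand side is $(X_\lambda X)_{\lambda+\mu}Y=X_{\lambda+\mu}Y$, a nonzero multiple of $Y$. The right-hand side is $(X_\lambda Y)_{-\mu-\partial}X$, which is a multiple of $Y_{\nu}X=0$. This is a contradiction.
\item For $\mathcal A_5(U)$, use $(Y,X,Y)$ or $(Y,Y,X)$. One side produces $X$ with coefficient $1$ while the other involves $U$ or vanishes, which should give a contradiction independent of $U$.
\end{itemize}

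\textbf{Main obstacle.} The delicate step is the $\mathcal A_7(H)$ computation, together with getting the $-\mu-\partial$ substitution right when coefficients depend on both $\lambda$ and $\partial$. I would first test $H=\lambda$ and $H=1$ explicitly to calibrate the convention before doing the general degree comparison.
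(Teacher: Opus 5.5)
Your proposal is correct and follows the paper's proof essentially step for step. You reduce to the eight basis triples by sesquilinearity and use the same witness triples: $(Y,Y,Y)$ for $\mathcal A_3$ and $\mathcal A_7$, giving $V(-\lambda-\mu)=V(\mu+\partial)$ and $(H(\lambda+\mu)-H(\lambda))(\lambda+\mu+\partial)=\mu(H(\lambda)-H(-\mu-\partial))$; $(Y,X,Y)$ for $\mathcal A_6$, giving $g(\lambda)=g(-\mu-\partial)$; $(X,X,Y)$ for $\mathcal B_2$ and $\mathcal S$; and $(Y,Y,X)$ or $(Y,X,Y)$ for $\mathcal A_5$. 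In the write-up, state the converse directions for $\mathcal A_6$ and $\mathcal A_7$ explicitly: for constant $g$ all eight triples hold, and for $\mathcal A_7$ the other seven triples hold identically while for $H=a\lambda+b$ both sides of the $(Y,Y,Y)$ identity equal $a\mu(\lambda+\mu+\partial)$. Necessity for $\mathcal A_7$ is immediate from your degree comparison, since the right side has $\partial$-degree $\deg H$ when $\deg H\ge 1$, while the left side has $\partial$-degree at most one.
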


\begin{proof}
Since all ten normal forms are already left-symmetric conformal algebras,
it remains only to check
\begin{equation}\label{Novikov-id}
(a_\lambda b)_{\lambda+\mu}c
=
(a_\lambda c)_{-\mu-\partial}b,\;\;a, b, c\in R.
\end{equation}
By conformal
sesquilinearity, it is enough to check Eq. \eqref{Novikov-id} on triples of
basis elements $X,Y$.

It is direct to check that $\mathcal A_1(B(\lambda,\partial))$, $\mathcal A_2$, $\mathcal A_4(f(\lambda))$ and $\mathcal B_1$ are Novikov conformal algebras, and, $\mathcal A_5(U(\lambda))$, $\mathcal B_2(f(\lambda))$, and
    $\mathcal S$ are not Novikov conformal algebras.

Next, we consider $\mathcal A_3(V(\partial))$. All basis triples satisfy
Eq. \eqref{Novikov-id} identically except possibly $(Y,Y,Y)$. For this triple,
Eq. (\ref{Novikov-id}) becomes
\begin{eqnarray}
V(-\lambda-\mu)=V(\mu+\partial).
\end{eqnarray}
Therefore, $V(\partial)\in \mathbb{C}$. Conversely, if $V(\partial)\in \mathbb{C}$, Eq. \eqref{Novikov-id} holds. Hence all basis triples satisfy Eq. \eqref{Novikov-id}.
Therefore, $\mathcal A_3(V(\partial))$ is a Novikov conformal algebra if and only if $V(\partial)\in\mathbb C$.

For $\mathcal A_6(g(\lambda))$, take $(a,b,c)=(Y,X,Y)$. Then
Eq. \eqref{Novikov-id} gives $g(\lambda)=g(-\mu-\partial)$.
Therefore $g(\lambda)$ must be a constant. Conversely, if $g(\lambda)$ is a constant, a direct
check of the eight basis triples shows that Eq. \eqref{Novikov-id} holds.
Thus $\mathcal A_6(g(\lambda))$ is a Novikov  conformal algebra if and only if $g(\lambda)\in \mathbb{C}$.
Since in the normal form $\mathcal A_6(g(\lambda))$, we already require
$g(\lambda)\neq0,1$. Thus $g(\lambda)\in \mathbb C\setminus\{0,1\}$.

Finally, we consider $\mathcal A_7(H(\lambda))$. All basis triples except
$(Y,Y,Y)$ satisfy Eq. \eqref{Novikov-id} identically. For $(Y,Y,Y)$, the
identity is equivalent to
\begin{equation}\label{eq:A7-Novikov}
\bigl(H(\lambda+\mu)-H(\lambda)\bigr)
(\lambda+\mu+\partial)
=
\mu\bigl(H(\lambda)-H(-\mu-\partial)\bigr).
\end{equation}
Comparing the coefficients of $\partial$ shows that $H(\lambda+\mu)-H(\lambda)$
is independent of $\lambda$. Hence there exists a polynomial $C(\mu)$
such that $H(\lambda+\mu)-H(\lambda)=C(\mu)$.
Setting $\lambda=0$ gives $C(\mu)=H(\mu)-H(0)$.
Therefore, we have $H(\lambda+\mu)
=
H(\lambda)+H(\mu)-H(0)$.
Let $K(\lambda)=H(\lambda)-H(0)$.
Then we have $K(\lambda+\mu)=K(\lambda)+K(\mu)$.
Since $K(\lambda)$ is a polynomial over $\mathbb C$, it follows that
$K(\lambda)=a\lambda$
for some $a\in\mathbb C$. Hence, we have
$H(\lambda)=a\lambda+b$.
Conversely, if $H(\lambda)=a\lambda+b$, then both sides of
Eq. \eqref{eq:A7-Novikov} equal $a\mu(\lambda+\mu+\partial)$. Thus,
 Eq. \eqref{Novikov-id} holds. Hence
$\mathcal A_7(H(\lambda))$
is a Novikov conformal algebra if and only if
$H(\lambda)=a\lambda+b$.

The proof is completed.
\end{proof}
\begin{cor}\label{cor-Nov}
    All  compatible Novikov conformal algebra structures on solvable Lie conformal algebras that are free of rank two as $\mathbb{C}[\partial]$-modules are isomorphic to one of the following:
    \begin{itemize}
\item $\mathcal{A}_1(B(\lambda,\partial))$, where $B(\lambda,\partial)\in\mathbb{C}[\lambda,\partial]$;
\item $\mathcal{A}_2$;
\item $\mathcal{A}_3(V(\partial))$, where $V(\partial)\in\mathbb{C}$;
\item $\mathcal{A}_4(f(\lambda))$, where $f(\lambda)\in\mathbb{C}[\lambda]\setminus\{0\}$;
\item $\mathcal{A}_6(g(\lambda))$, where $g(\lambda)\in\mathbb{C}\setminus\{0,1\}$;
\item $\mathcal{A}_7(H(\lambda))$, where $H(\lambda)=a\lambda+b$ for some $a,b\in\mathbb{C}$;
\item $\mathcal{B}_1$.
\end{itemize}
Moreover, the isomorphism relations among the parametrized normal forms are those induced by Theorem \ref{pp4.5}.
\end{cor}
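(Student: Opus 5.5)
The corollary should follow by combining Theorem~\ref{pp4.5} with Proposition~\ref{prop:Nov}, since being Novikov is an isomorphism invariant.

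First, I would check invariance directly. Let $\varphi: R_1\to R_2$ be an isomorphism of left-symmetric conformal algebras and suppose $R_2$ is Novikov. Then $\varphi$ is a $\mathbb{C}[\partial]$-module map intertwining the $\lambda$-products, so
\[
\varphi\bigl((a_\lambda b)_{\lambda+\mu}c-(a_\lambda c)_{-\mu-\partial}b\bigr)=(\varphi(a)_\lambda \varphi(b))_{\lambda+\mu}\varphi(c)-(\varphi(a)_\lambda \varphi(c))_{-\mu-\partial}\varphi(b)=0.
\]
The only subtle point is the substitution $-\mu-\partial$, where $\partial$ acts on the result. This is compatible with $\varphi$ because $\varphi$ commutes with $\partial$. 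Since $\varphi$ is injective, $R_1$ is Novikov as well.

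Second, let $R$ be a compatible Novikov conformal algebra structure on a solvable Lie conformal algebra that is free of rank two. In particular $R$ is a compatible left-symmetric conformal algebra, so by Theorem~\ref{pp4.5} it is isomorphic to exactly one normal form among $\mathcal{A}_1,\dots,\mathcal{A}_7,\mathcal{B}_1,\mathcal{B}_2,\mathcal{S}$ with suitable parameters. By the invariance step, that normal form is itself Novikov. Proposition~\ref{prop:Nov} then excludes $\mathcal{A}_5$, $\mathcal{B}_2$ and $\mathcal{S}$, and restricts the parameters:
\begin{itemize}
\item $V(\partial)\in\mathbb{C}$ for $\mathcal{A}_3$;
\item $g(\lambda)\in\mathbb{C}\setminus\{0,1\}$ for $\mathcal{A}_6$;
\item $H(\lambda)=a\lambda+b$ for $\mathcal{A}_7$.
\end{itemize}
Conversely, each listed algebra is Novikov by Proposition~\ref{prop:Nov}. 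Each one also lies in the class under consideration, because its sub-adjacent Lie conformal algebra is solvable, as shown in the proof of Theorem~\ref{thm-1}.

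Finally, the isomorphism relations among the listed algebras are exactly those of Theorem~\ref{pp4.5}, restricted to the Novikov parameter sets. One should check that these restricted parameter sets are closed under the relations there; this needs no new argument, since being Novikov is invariant under isomorphism. As a consistency check, for $\mathcal{A}_7$ the relation $H_1=\alpha H_2+c\lambda$ preserves the form $a\lambda+b$.

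There is no real obstacle here. The only point needing care is the invariance computation, specifically that $\varphi$ respects the $-\mu-\partial$ substitution, and this holds because $\varphi$ is $\mathbb{C}[\partial]$-linear.
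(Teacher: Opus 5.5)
Your proposal is correct and is essentially the paper's argument, which simply combines Theorem~\ref{pp4.5} with Proposition~\ref{prop:Nov}. Your explicit check that the Novikov identity is transported by a $\mathbb{C}[\partial]$-linear isomorphism, including the $-\mu-\partial$ substitution, spells out the invariance that the paper uses implicitly.
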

\begin{proof}
This follows directly from Theorem~\ref{pp4.5} and Proposition~\ref{prop:Nov}.
\end{proof}
\delete{\begin{thm}\label{pp4.5}
    All  compatible left-symmetric conformal algebra structures on  solvable Lie conformal algebras which are free of rank two as $\mathbb{C}[\partial]$-modules are isomorphic to one of the following ten normal forms:
\begin{eqnarray*}
\mathcal{A}_1(B(\lambda,\partial)),\ \mathcal{A}_2,\ \mathcal{A}_3(V(\partial)),\ \mathcal{A}_4(f(\lambda)),\ \mathcal{A}_5(U(\lambda)),\ \mathcal{A}_6(g(\lambda)),\ \mathcal{A}_7(H(\lambda)),\ \mathcal{B}_1,\ \mathcal{B}_2(f(\lambda)),\ \mathcal{S}.
\end{eqnarray*}
Moreover, any two different types of left-symmetric conformal algebras are not isomorphic and two left-symmetric conformal algebras of the same type are isomorphic if and only if the parameters satisfy the conditions listed in the following table.
\begin{eqnarray*}
\begin{array}{|c|c|}
\hline
\text{Algebra type} & \text{Isomorphism condition} \\
\hline
\mathcal{A}_1(B_1(\lambda,\partial)) \cong \mathcal{A}_1(B_2(\lambda,\partial)) & B_1(\lambda,\partial)=c\,B_2(\lambda,\partial),\quad c\in \mathbb{C}\setminus\{0\}\\
\hline
\mathcal{A}_3(V_1(\partial)) \cong \mathcal{A}_3(V_2(\partial)) & V_1(\partial)=c\,V_2(\partial),\quad c\in \mathbb{C}\setminus\{0\}\\
\hline
\mathcal{A}_4(f_1(\lambda)) \cong \mathcal{A}_4(f_2(\lambda)) & f_1(\lambda)=c\,f_2(\lambda),\quad c\in \mathbb{C}\setminus\{0\}\\
\hline
\mathcal{A}_5(U_1(\lambda)) \cong \mathcal{A}_5(U_2(\lambda)) & U_1(\lambda)=a\,U_2(\lambda)+c\,\lambda,\quad a\in \mathbb{C}\setminus\{0\},\ c\in\mathbb{C} \\
\hline
\mathcal{A}_6(g_1(\lambda)) \cong \mathcal{A}_6(g_2(\lambda)) & g_1(\lambda)=g_2(\lambda) \\
\hline
\mathcal{A}_7(H_1(\lambda)) \cong \mathcal{A}_7(H_2(\lambda)) & H_1(\lambda)=a\,H_2(\lambda)+c\,\lambda,\quad a\in \mathbb{C}\setminus\{0\},\ c\in\mathbb{C} \\
\hline
\mathcal{B}_2(f_1(\lambda)) \cong \mathcal{B}_2(f_2(\lambda)) & f_1(\lambda)=f_2(\lambda) \\
\hline
\end{array}
\end{eqnarray*}
\end{thm}
\begin{proof}
    By Proposition~\ref{pro-1}, any family of left-symmetric conformal algebras classified in Theorem \ref{thm-1} can be transformed into one of these ten normal forms. By Propositions~\ref{pp1}, \ref{pro-p2}, \ref{pp4.3} and \ref{pp4.4}, these ten classes are mutually non-isomorphic.

  For the isomorphism conditions, we first illustrate the computation for the type \(\mathcal{B}_2(f(\lambda))\). The other cases are similar.

     Let $\varphi:\mathcal{B}_2(f_1(\lambda))\to\mathcal{B}_2(f_2(\lambda))$ be an isomorphism of left-symmetric conformal algebras. Denote the basis of $\mathcal{B}_2(f_1(\lambda))$ by $\{X_1,Y_1\}$ and that of $\mathcal{B}_2(f_2(\lambda))$ by $\{X_2,Y_2\}$, and write
$$
\varphi(X_1)=A(\partial)X_2+B(\partial)Y_2,\qquad
\varphi(Y_1)=C(\partial)X_2+D(\partial)Y_2,
$$
where \(A(\partial),B(\partial),C(\partial),D(\partial)\in\mathbb{C}[\partial]\) and the transformation matrix is invertible.

By $\varphi(X_{1\lambda}X_1)=\varphi(X_1)_\lambda\varphi(X_1)$, comparing the coefficients of $X_2$ and $Y_2$ yields
\begin{align}\label{73-}
    A(\partial)=A(-\lambda)A(\partial+\lambda),
\end{align}
\begin{align}\label{74-}
    B(\partial)=A(-\lambda)B(\partial+\lambda)f_2(\lambda).
\end{align}
By Eqs.~(\ref{73-})--(\ref{74-}), we have $A(\partial)=1$.

By $\varphi(Y_{1\lambda}Y_1)=\varphi(Y_1)_\lambda\varphi(Y_1)$, comparing the coefficients of $X_2$  yields $C(-\lambda)C(\partial+\lambda)=0$, i.e. $C(\partial)=0$.
Then the transformation matrix is  $\begin{pmatrix}1&B(\partial)\\0&D(\partial)\end{pmatrix}$, whose invertibility forces $D(\partial)\in \mathbb{C}\setminus\{0\}$ and we denote it by $d$.

By  $\varphi(X_{1\lambda}Y_1)=\varphi(X_1)_\lambda\varphi(Y_1)$, comparing the coefficients of $Y_2$  yields $f_1(\lambda)=f_2(\lambda)$.
Conversely, if $f_1(\lambda)=f_2(\lambda)$, the map $\varphi(X_1)=X_2$, $\varphi(Y_1)=Y_2$ is an isomorphism.
Therefore, $\mathcal{B}_2(f_1(\lambda))\cong\mathcal{B}_2(f_2(\lambda))$ if and only if $f_1(\lambda)=f_2(\lambda)$.\\

Similarly, for the remaining types, let \(\varphi: \mathcal{A}_i \to \mathcal{A}_i\) be an isomorphism. Denote the bases by \(\{X_1,Y_1\}\) and \(\{X_2,Y_2\}\), and write
\[
\varphi(X_1) = A(\partial)X_2 + B(\partial)Y_2, \quad \varphi(Y_1) = C(\partial)X_2 + D(\partial)Y_2,
\]
where \(A(\partial),B(\partial),C(\partial),D(\partial)\in\mathbb{C}[\partial]\) and the transformation matrix is invertible.

\smallskip
\textit{Common Step for \(\mathcal{A}_1(B(\lambda,\partial)), \mathcal{A}_3(V(\partial)), \mathcal{A}_5(U(\lambda)), \mathcal{A}_6(g(\lambda)), \mathcal{A}_7(H(\lambda))\):}\\
For these types, from \(\varphi(X_{1\lambda}X_1) = \varphi(X_1)_\lambda\varphi(X_1)\), comparing the coefficients we have \(B(\partial)=0\). Since the transformation matrix is invertible, its determinant \(A(\partial)D(\partial) \in \mathbb{C}\setminus\{0\}\), which implies \(A(\partial), D(\partial) \in \mathbb{C}\setminus\{0\}\). We denote them by \(a\) and \(d\), respectively. Thus, we uniformly have
\[
\varphi(X_1) = aX_2, \quad \varphi(Y_1) = C(\partial)X_2 + dY_2 \quad (a, d \in \mathbb{C}\setminus\{0\}).
\]
Now we proceed with the remaining calculations for each type individually.

\smallskip
\textit{Type \(\mathcal{A}_1(B(\lambda,\partial))\):}
Substituting into \(\varphi(Y_{1\lambda}Y_1) = \varphi(Y_1)_\lambda\varphi(Y_1)\) and comparing the \(X_2\)-coefficients yields \(B_1(\lambda,\partial) = \frac{d^2}{a}B_2(\lambda,\partial)\).

\smallskip
\textit{Type \(\mathcal{A}_3(V(\partial))\):}
Comparing the \(Y_2\)-coefficient of \(\varphi(Y_{1\lambda}Y_1)=\varphi(Y_1)_\lambda\varphi(Y_1)\) yields \(d^2=d\), so \(d=1\). Comparing the \(X_2\)-coefficients yields \(V_2(\partial) - aV_1(\partial) = C(\partial) - C(-\lambda)\). Since the left side is independent of \(\lambda\), the right side must also be independent of \(\lambda\), which forces \(C(\partial)\) to be a constant. Hence \(V_2(\partial) = aV_1(\partial)\).

\smallskip
\textit{Type \(\mathcal{A}_4(f(\lambda))\):}
From \(\varphi(X_{1\lambda}Y_1) = \varphi(X_1)_\lambda\varphi(Y_1)\), comparing the \(X_2\)-coefficients yields \(f_1(\lambda)C(\partial)=0\). Since \(f_1(\lambda)\neq0\), then we have \(C(\partial)=0\) and \(A(\partial)=a, D(\partial)=d, (a,d \in \mathbb{C}\setminus\{0\}\)). Comparing the \(Y_2\)-coefficients  gives \(f_1(\lambda) = a f_2(\lambda)\).

\smallskip
\textit{Type \(\mathcal{A}_5(U(\lambda))\):}
From \(\varphi(Y_{1\lambda}X_1)= \varphi(Y_1)_\lambda\varphi(X_1)\), comparing the \(X_2\)-coefficients gives \(d=1\). Comparing the \(X_2\)-coefficients of \(\varphi(Y_{1\lambda}Y_1)=\varphi(Y_1)_\lambda\varphi(Y_1)\) yields \(U_2(\lambda) - aU_1(\lambda) = C(\partial) - C(\partial+\lambda)\). Since the left side depends only on \(\lambda\), the degree of \(C(\partial)\) is at most \(1\). Setting \(C(\partial)=c_0+c_1\partial\) yields \(U_2(\lambda) = aU_1(\lambda) + c_1\lambda\).

\smallskip
\textit{Type \(\mathcal{A}_6(g)\):}
From \(\varphi(X_{1\lambda}Y_1)=\varphi(X_1)_\lambda\varphi(Y_1)\), comparing the \(X_2\)-coefficients gives \(d=1\). Substituting into \(\varphi(Y_{1\lambda}X_1) = \varphi(Y_1)_\lambda\varphi(X_1)\) we have \(g_1(\lambda) = g_2(\lambda)\).

\smallskip
\textit{Type \(\mathcal{A}_7(H)\):}
From \(\varphi(X_{1\lambda}Y_1)=\varphi(X_1)_\lambda\varphi(Y_1)\), comparing the \(X_2\)-coefficients gives \(d=1\). Comparing the \(X_2\)-coefficients of \(\varphi(Y_{1\lambda}Y_1)=\varphi(Y_1)_\lambda\varphi(Y_1)\) yields
\begin{align}\label{7.5}
    C(\partial+\lambda)-C(\partial)+C(-\lambda) = [aH_1(\lambda)-H_2(\lambda)](\lambda+\partial).
\end{align}
Let \(C(\partial)=\sum_{i=0}^{n} c_i\partial^i\). If \(n\ge3\), comparing the degrees of \(\partial\) in Eq.~(\ref{7.5}) gives a contradiction, hence \(n\le2\). Then we have \(H_2(\lambda)=aH_1(\lambda)-2c_2\lambda\).
\end{proof}
\begin{pro}
    We identify which of the ten normal forms in Proposition~\ref{pro-1} are Novikov conformal algebras as follows:
\begin{enumerate}[label=(\roman*)]
    \item \(\mathcal{A}_1(B(\lambda,\partial))\), \(\mathcal{A}_2\), \(\mathcal{A}_4(f(\lambda))\), and \(\mathcal{B}_1\) are always Novikov conformal algebras;
    \item \(\mathcal{A}_3(V(\partial))\) is Novikov if and only if \(V(\partial)\) is a constant;
    \item \(\mathcal{A}_6(g(\lambda))\) is Novikov if and only if \(g(\lambda)\) is a constant with \(g(\lambda) \neq 0,1\);
    \item \(\mathcal{A}_7(H(\lambda))\) is Novikov if and only if \(H(\lambda) = a\lambda+b\) for some \(a,b \in \mathbb{C}\);
    \item \(\mathcal{A}_5(U(\lambda))\), \(\mathcal{B}_2(f(\lambda))\), and \(\mathcal{S}\) are never Novikov conformal algebras.
\end{enumerate}
\end{pro}

\begin{proof}
Recall that a Novikov conformal algebra is a left-symmetric conformal algebra that additionally satisfies the axiom $(a_\lambda b)_{\lambda+\mu}c = (a_\lambda c)_{-\mu-\partial}b$.
Since all ten normal forms in Proposition~\ref{pro-1} are already left-symmetric conformal algebras, it suffices to verify whether they satisfy the axiom.

For \(\mathcal{A}_5(U(\lambda))\), \(\mathcal{B}_2(f(\lambda))\), and \(\mathcal{S}\), explicit counterexamples are given by:
\begin{itemize}
    \item \(\mathcal{A}_5(U(\lambda))\): taking \(a=b=Y, c=X\), then we have $X=0$, a contradiction.
    \item\(\mathcal{B}_2(f(\lambda))\): taking \(a=b=X, c=Y\), then we have  \(f(\lambda+\mu)Y=0\) , a contradiction.
    \item  \(\mathcal{S}\): taking \(a=b=X, c=Y\), then we have  \(\frac{1}{2}Y=0\) , a contradiction.
\end{itemize}

For\(\mathcal{A}_3(V(\partial))\), taking \(a=b=c=Y\) yields \(V(-\lambda-\mu) = V(\mu+\partial)\) for all \(\lambda,\mu,\partial\) , then we know that \(V(\partial)\) is a constant.

For\(\mathcal{A}_6(g(\lambda))\), taking \(a=Y, b=X, c=Y\) yields \(g(\lambda) = g(-\mu-\partial)\) for all \(\lambda,\mu,\partial\), forcing \(g(\lambda)\) to be a constant.

For \(\mathcal{A}_7(H(\lambda))\), taking \(a=b=c=Y\), the axiom reduces to
\begin{align}\label{7.6}
    (H(\lambda+\mu)-H(\lambda))(\lambda+\mu+\partial) = \mu(H(\lambda)-H(-\mu-\partial)).
\end{align}
Comparing the coefficients of \(\partial\) on both sides of Eq.~(\ref{7.6}), we see that the coefficient on the left side is \(H(\lambda+\mu)-H(\lambda)\), while the coefficient on the right side depends only on \(\mu\). Therefore, \(H(\lambda+\mu)-H(\lambda)\) must be independent of \(\lambda\).  Let \(H(\lambda+\mu)-H(\lambda) = C(\mu)\). Setting \(\lambda = 0\) yields
\begin{align}
    C(\mu) = H(\mu) - H(0).
\end{align}
 Consequently, we have
\begin{align}\label{eq7.8}
    H(\lambda+\mu) = H(\lambda) + H(\mu) - H(0).
\end{align}

Set \(K(\lambda) = H(\lambda) - H(0)\). Then from Eq.~(\ref{eq7.8}) we have \(K(\lambda+\mu) = K(\lambda) + K(\mu)\). Since \(K(\lambda)\) is a polynomial, it must be of the form \(K(\lambda) = c\lambda\) for some constant \(c \in \mathbb{C}\). Hence, \(H(\lambda) = c\lambda + H(0)\), which implies that \(H(\lambda) = a\lambda + b\) for some \(a, b \in \mathbb{C}\).

Finally, for \(\mathcal{A}_1(B(\lambda,\partial))\), \(\mathcal{A}_2\), \(\mathcal{A}_4(f(\lambda))\), and \(\mathcal{B}_1\), a direct substitution of the basis elements into the axiom confirms that it holds identically.
\end{proof}}

\noindent {\bf Acknowledgments.} This work is supported by the Zhejiang Provincial Natural Science Foundation of China (No.
LZ25A010004).

\smallskip

\noindent
{\bf Declaration of interests. } The authors have no conflicts of interest to disclose.

\smallskip

\noindent
{\bf Data availability. } No new data were created or analyzed in this study.

\end{document}